\newcommand{\meantmp}[2]{#1\langle{#2}#1\rangle}
\newcommand{\mean}[1]{\meantmp{}{#1}}
\numberwithin{equation}{section}
\newcommand{\R}{\mathbb{R}}
\newcommand{\N}{\mathbb{N}}
\newcommand{\Z}{\mathbb{Z}}
\newcommand{\bu}{\mathbf{u}}
\newcommand{\bftau}{\mathbf{\tau}}
\newcommand{\bq}{\mathbf{q}}
\newcommand{\bfp}{\mathbf{p}}
\newcommand{\bfg}{\mathbf{g}}
\newcommand{\dt}{{\Delta t}}
\newcommand{\by}{{y}}
\newcommand{\ba}{\mathbf{a}}
\newcommand{\bn}{\mathbf{n}}
\newcommand{\bG}{\boldsymbol{G}}
\newcommand{\bR}{\boldsymbol{R}}
\newcommand{\bpsi}{\boldsymbol{\psi}}
\newcommand{\bvphi}{{\boldsymbol{\varphi}}}
\newcommand{\bsigma}{\boldsymbol{\sigma}}
\DeclareMathOperator{\divg}{div}
\DeclareMathOperator{\bog}{Bog}
\DeclareMathOperator{\test}{Test_{\eta}}
\DeclareMathOperator{\testn}{Test_{\eta^{\varepsilon_n}}}
\DeclareMathOperator{\sym}{sym}
\DeclareMathOperator{\cor}{\Kcal_{\eta}}
  \newcommand{\seb}[1]{\textcolor[rgb]{0.00,0.00,1.00}{#1}}
\newtheoremstyle{thmlemcorr}{10pt}{10pt}{\itshape}{}{\bfseries}{.}{10pt}{{\thmname{#1}\thmnumber{ #2}\thmnote{ (#3)}}}
\newtheoremstyle{thmlemcorr*}{10pt}{10pt}{\itshape}{}{\bfseries}{.}\newline{{\thmname{#1}\thmnumber{ #2}\thmnote{ (#3)}}}
\newtheoremstyle{remexample}{10pt}{10pt}{}{}{\bfseries}{.}{10pt}{{\thmname{#1}\thmnumber{ #2}\thmnote{ (#3)}}}
\theoremstyle{thmlemcorr}
\newtheorem{theorem}{Theorem}
\numberwithin{theorem}{section}
\newtheorem{lemma}[theorem]{Lemma}
\newtheorem{corollary}[theorem]{Corollary}
\newtheorem{proposition}[theorem]{Proposition}
\newtheorem{definition}[theorem]{Definition}
\theoremstyle{thmlemcorr*}
\newtheorem{theorem*}{Theorem}
\newtheorem{lemma*}[theorem]{Lemma}
\newtheorem{corollary*}[theorem]{Corollary}
\newtheorem{proposition*}[theorem]{Proposition}
\newtheorem{problem*}[theorem]{Problem}
\newtheorem{conjecture*}[theorem]{Conjecture}
\newtheorem{definition*}[theorem]{Definition}
\theoremstyle{remexample}
\newtheorem{remark}[theorem]{Remark}
\newcommand{\Acal}{\mathcal{A}}
\newcommand{\Ecal}{\mathcal{E}}
\newcommand{\Kcal}{\mathcal{K}}
\newcommand{\Tcal}{\mathcal{T}}
\newcommand{\Vcal}{\mathcal{V}}
\newcommand{\bfPhi}{\mathbf{\Phi}}
\newcommand{\Nbb}{\mathbb{N}}
\newcommand{\Rbb}{\mathbb{R}}
\DeclareMathOperator{\diverg}{div}
\DeclareMathOperator{\supp}{supp}
\newcommand{\norm}[1]{\|#1\|}
\newcommand{\abs}[1]{|#1|}
\newcommand{\absB}[1]{\Bigl|#1\Bigr|}
\newcommand{\absBB}[1]{\biggl|#1\biggr|}
\newcommand{\skp}[1]{\langle #1 \rangle}
\newcommand{\toweak}{\rightharpoonup}
\newcommand{\toweakstar}{\overset{*}\rightharpoonup}
\newcommand{\eps}{\epsilon}
\newcommand{\Bogovskij}{{Bogovski\u{\i}}{}}
\def\Xint#1{\mathchoice 
{\XXint\displaystyle\textstyle{#1}}%
{\XXint\textstyle\scriptstyle{#1}}%
{\XXint\scriptstyle\scriptscriptstyle{#1}}%
{\XXint\scriptscriptstyle\scriptscriptstyle{#1}}%
\!\int} 
\def\XXint#1#2#3{{\setbox0=\hbox{$#1{#2#3}{\int}$} 
\vcenter{\hbox{$#2#3$}}\kern-.5\wd0}} 
\def\dashint{\,\Xint-}
\begin{document}

\title[Existence and regularity of FSI with non-linear shell]{Existence and regularity of weak solutions for a fluid interacting with a non-linear shell in three dimensions}


\author{{ Boris Muha and Sebastian Schwarzacher }}
\address{Boris Muha, Department of Mathematics, University of Zagreb, Croatia, borism@math.hr; Sebastian Schwarzacher, Department of Mathematics and Physics, Charles University, Prague, Czech Republic, schwarz@karlin.mff.cuni.cz}

\keywords{Fluid-structure interaction, weak solutions, regularity, compactness}

\date{}

\begin{abstract} We study the unsteady incompressible Navier-Stokes equations in three dimensions interacting with a non-linear flexible shell of Koiter type. 
This leads to a coupled system of non-linear PDEs where the moving part of the boundary is an unknown of the problem. 
The known existence theory for weak solutions is extended to non-linear Koiter shell models. We introduce a-priori estimates that reveal higher regularity of the shell displacement beyond energy estimates. These are essential for non-linear Koiter shell models, since such shell models are non-convex (w.r.t.\ terms of highest order). The estimates are obtained by introducing new analytical tools that allow to exploit dissipative effects of the fluid for the (non-dissipative) solid. The regularity result depends on the geometric constitution alone and is independent of the approximation procedure; hence it holds for arbitrary weak solutions. The developed tools are further used to introduce a generalized Aubin-Lions type compactness result suitable for fluid-structure interactions. 
\end{abstract}

\maketitle

\section{Introduction}
Fluid-structure interactions (FSI) are everyday phenomena with many applications, for example in aeroelasticity \cite{Dowel15} and biomedicine \cite{FSIforBIO}. Mathematically, the FSI problems are described by coupling fluid equations with elasticity equations. The analysis of the FSI problems is challenging and attractive mainly due to the following properties. First, the resulting system of non-linear PDEs is of hyperbolic-parabolic type with the coupling taking place at the fluid-structure interface. Second, the fluid domain is an unknown of the problem, i.e.\ the resulting problem is a moving boundary problem. In this paper we study the coupling of the $3D$ incompressible Navier-Stokes equations with the evolution of the non-linear Koiter shell equation. Our main result is that any finite-energy weak solution to the considered FSI problem satisfies an additional regularity property on its interval of existence (see Theorem~\ref{RegTem}). 
More precisely, we show that the elastic displacement belongs to the following Bochner space $L^{2}_t( H^{2+s}_x)\cap H^1_t(H_x^{s})$ for all $s<\frac12$. Here $H^s$ denotes the standard fractional Sobolev space\footnote{For a precise definition of the fractional Sobolev space see Subsection~\ref{ssec:notation}.}. In particular, due to respective embedding theorems, the elastic displacement is Lipschitz continuous in the space variable for almost every moment of time. We use this result to show the existence of weak solutions to a fluid-non-linear Koiter shell interaction problem (see Theorem~\ref{thm:ex}). Since the non-linear Kotier shell equations are quasi-linear with non-linear coefficients depending on the terms of leading order in the energy, the additional structure regularity estimate is crucial for the compactness argument in the construction of a weak solution. The main idea behind the regularity theorem is to use the fluid dissipation and the coupling conditions to prove the additional regularity estimate for the structure displacement. The realization of this idea is technically challenging. It includes the development of a comprehensive analysis to construct a solenoidal extension and smooth approximations for the time-changing domain with clear (local) dependence on the regularity of the boundary values and the boundary itself. The approach is quite general and thus seems suitable for further applications related to the analysis of variable geometries. 
Actually, the present result was already applied, please see the preprints~\cite{Breit,SchSor20,Srdjan}.

Fluid-structure interaction has been an increasingly active area of research in mathematics in the last 20 years. Due to the overwhelming number of contributions in the area we just mention analytic results that are most relevant for our work in this brief literature review. The existence results for weak solutions for the FSI problems where the incompressible Navier-Stokes equations are coupled with a lower-dimensional elasticity model (e.g. plate or shell laws) have been obtained in \cite{CDEM,CG,BorSun,LenRuz,hundertmark2016existence,SunBorMulti}. 
The corresponding existence result for the compressible fluid flow was proved in \cite{BreitSchwarzacher}. All mentioned results on the existence of weak solutions are valid up to time of possible self-intersection of the domain. Up to our knowledge the number of regularity estimates for long time solutions are rather limited. Recently some significant results on strong solutions for large initial data and a $2D$ fluid interacting with a $1D$ solid have been obtained, see~\cite{ grandmont2016existence,grandmont2018existence}. 
For a three dimensional fluid interacting with a three dimensional elastic body see \cite{ignatova2014well,Ignatova17} for the global results with small initial data and structural damping. The theory of local-in-time strong solutions for $3D$-$3D$ FSI problems is rather well developed, see recent results in \cite{boulakia2018well,Kuk,raymond2014fluid} and references within.
We wish to emphasize that in all these works the structure equations were linear. For the FSI problem with non-linear structure the theory is far less developed. The existence of weak solution to the FSI problem with a Koiter membrane energy that includes non-linearities of lower order and a leading order linear regularizing term was proved in \cite{BorSunnon-linear}.
Short time or small data existence result in the context of strong solutions for various non-linear fluid structure models have been obtained in~\cite{ChengShkollerCoutand,ChenShkoller,CSS2,remond2018viscous}. Finally, we wish to mention some results in the static case that can be found here~\cite{GaldiKyed,grandmont2002existence}.

The role of the fluid dissipation on the qualitative properties of the solution is one of the central questions in the area of fluid-structure interactions and related systems, and has been studied by many authors, see e.g.~\cite{AvalosTriggiani2,GaldiZunino,ZhangZuazuaARMA07} and references within. We present here a new technique that allows to transfer dissipation features from the fluid equation to the non-linear hyperbolic elastic displacement. We wish to point out that better regularity can not be expected for a non-linear hyperbolic PDE with arbitrary smooth right hand sides and initial data. It is the coupling with a dissipative equation that allows for this better regularity.

\subsection{The coupled PDE}
We first discuss the Koiter shell model (see e.g. \cite{CiarletVol3,Koiter}) which describes the evolution of the elastic boundary of the fluid domain. Let $\Omega\subset\R^3$ be a domain such that its boundary $\Gamma=\partial\Omega$ is parameterized by a $C^3$ injective mapping $\bvphi:\omega\to\R^3$, where $\omega\subset\R^2$. To simplify notation we assume in this paper that the boundary of $\Omega$ can be parameterized by a flat torus $\omega=\R^2/{\Z^2}$ which corresponds to the assumption of periodic boundary conditions for the structure displacement. We consider the periodic boundary conditions just to avoid unnecessary technical complications (see Remark~\ref{rem:boundary}).
{An example of such domain is a cylindrical domain, see Figure \ref{fig:3d}, with periodic boundary condition for the fluid velocity on the inlet/outlet part of the boundary. Namely, in this setting we can identify the inlet and the outlet part of the boundary, and thus the fluid domain is a $3d$ torus, which fits in our framework.} 

We denote the tangential vectors at any point $\bvphi(\by)$ in the following way:
$$\ba_{\alpha}(\by)=\partial_{\alpha}\bvphi(\by),\; \alpha=1,2,\;\by\in\omega.$$ 
The unit normal vector is given by $\displaystyle{\bn(\by)=\frac{\ba_1(\by)\times\ba_2(\by)}{|\ba_1(\by)\times\ba_2(\by)|}}$. The surface area element of $\partial\Omega$ is given by $dS=|\ba_1(\by)\times\ba_2(\by)|d\by$. We assume that the domain deforms only in the normal direction and denote by $\eta(t,\by)$ the magnitude of the displacement.
This reflects the situation when the fluid pressure is the dominant force acting on the structure in which case it is reasonable to assume that the shell is deforming in normal direction.
In this case the deformed boundary can be parameterized by the following coordinates:
\begin{equation}\label{Parametrization}
\bvphi_{\eta}(t,\by)=\bvphi(\by)+\eta(t,\by)\bn(\by),\; t\in (0,T),\; \by\in\omega.
\end{equation}
We wish to emphasize that this restriction is standard in the majority of mathematical works on the analysis of weak solutions, mainly due to severe technical difficulties associated with the analysis of the case where the full displacement is taken into account. The deformed boundary is denoted by $\Gamma_{\eta}(t)=\bvphi_{\eta}(t,\omega)$. It is a well known fact from differential geometry (see e.g.\ \cite{lee2003introduction}) that there exist $\alpha(\Omega),\beta(\Omega)>0$ such that for $\eta(\by)\in (\alpha(\Omega),\beta(\Omega))$, $\bvphi_{\eta}(t,.)$ is a bijective parameterization of the surface $\Gamma_{\eta}(t)$ and it defines a domain $\Omega_{\eta}(t)$ in its interior such that $\partial\Omega_{\eta}(t)=\Gamma_{\eta}(t)$. Moreover, there exists a bijective transformation $\bpsi_\eta(t,.):\Omega\to\Omega_{\eta}(t)$. For more details on the geometry see Section~\ref{sec:shell} and  Definition~\ref{def:coor}.

We denote the moving domain in the following way:
$$
(0,T)\times\Omega_{\eta}(t):=\bigcup_{t\in (0,T)}\{t\}\times\Omega_{\eta}(t).
$$
The non-linear Koiter model is given in terms of the differences of the first and the second fundamental forms of $\Gamma_{\eta}(t)$ and $\Gamma$ which represent membrane forces and bending forces respectively. These forces are summarized in its potential - the Koiter energy $\Ecal_K(t,\eta)$. The definition of the potential is taken from~\cite[Section 4]{CiarletKoiter}. For a precise definition and the derivation of the energy for our coordinates see~\eqref{KoiterEnergy} below. Let $\mathcal{L}_K\eta$ be the $L^2$-gradient of the Koiter energy $\Ecal_K(t,\eta)$, $h$ be the (constant) thickness of the shell and $\varrho_s$ the (constant) density of the shell. Then the respective momentum equation for the shell reads 
\begin{equation}\label{Structure}
\varrho_s h\partial^2_t\eta+\mathcal{L}_K\eta=g,
\end{equation} 
where $g$ are the momentum forces of the fluid acting on the shell.

\begin{center}
\begin{figure}[h]
\includegraphics[scale=0.35]{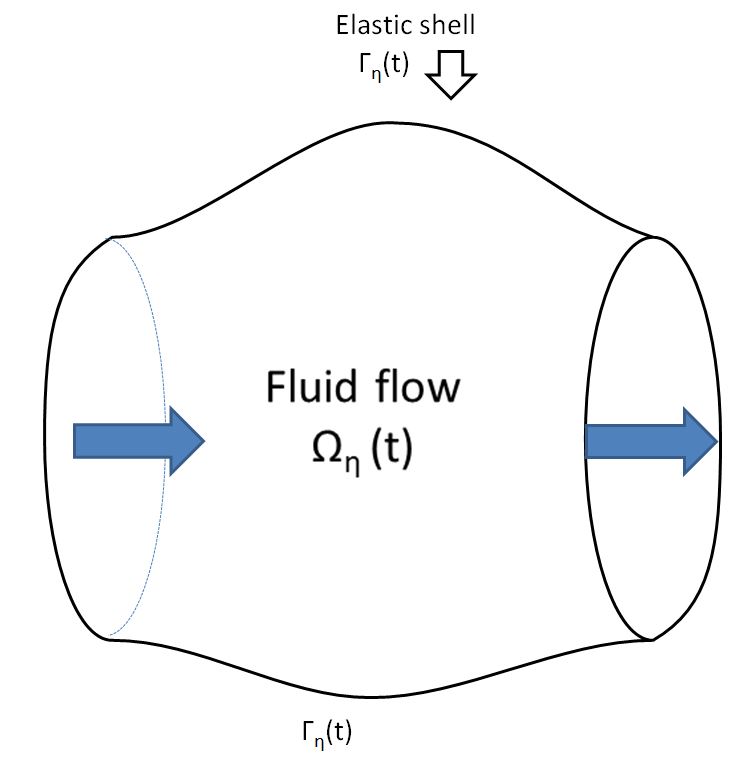}
\caption{An example of the deformed cylindrical domain.}
\label{fig:3d}
\end{figure}
\end{center}

The fluid flow is governed by the incompressible Navier-Stokes equations:
\begin{align}
\varrho_f\big (\partial_t\bu+(\bu\cdot\nabla)\big )\bu&=\divg\bsigma(\bu,p)&\;{\rm in}\;(0,T)\times\Omega_{\eta}(t),
\label{NS}
\\
\divg\bu&=0\;&{\rm in}\;(0,T)\times\Omega_{\eta}(t),
\end{align}
where $\bsigma(\bu,p)=-p\mathbb{I}+2\mu{\rm sym}\nabla\bu$ is the fluid stess tensor
and $\varrho_f$ the (constant) density of the fluid.

The fluid and the structure are coupled via kinematic and dynamic coupling conditions. We prescribe the no-slip kinematic coupling condition which means that the fluid and the structure velocities are equal on the elastic boundary:
\begin{equation}\label{kinematic}
\bu(t,\bvphi_\eta(t,\by))=\partial_t\eta(t,\by)\bn(\by),\quad \by\in \omega.
\end{equation}
The dynamic boundary condition states that the total force in the normal direction on the boundary is zero:
\begin{equation}\label{dynamic}
g(t,y)=-\bsigma(\bu,p)(t,\bvphi_{\eta}(t,\by))\bn(\eta(t,y))\cdot\bn(\by),\quad y\in\omega,
\end{equation}
where $\bn(\eta(t,y))=\partial_1\bvphi_{\eta}(t,y)\times\partial_2\bvphi_{\eta}(t,y)$ is defined as a weighted vector pointing in the direction of the outer normal to the deformed domain at point $\bvphi_{\eta}(t,y)$; the weight is exactly the Jacobian of the change of variables from Eulerian to Lagrangian coordinates. 

We may summarize and state the full fluid-structure interaction problem.
\\
Find $(\bu,\eta)$ such that
\begin{align}
\varrho_f\big (\partial_t\bu+(\bu\cdot\nabla)\big )\bu&=\divg\bsigma(\bu,p)&\;{\rm in}\;(0,T)\times\Omega_{\eta}(t),
\nonumber
\\
\divg\bu&=0\;&{\rm in}\;(0,T)\times\Omega_{\eta}(t),
\nonumber
\\
\varrho_s h\partial^2_t\eta+\mathcal{L}_K\eta&=-(\bsigma(\bu,p)\circ\bvphi_\eta)\bn(\eta)\cdot\bn &{\rm in}\; (0,T)\times\omega,
\label{FSI}
\\
\bu\circ\bvphi_\eta&=\partial_t\eta\bn &{\rm in}\; (0,T)\times \omega,
\nonumber
\\
\bu(0,.)&=\bu_0&{\rm in}\;\Omega_\eta(0),
\nonumber
\\ 
\eta(0)=\eta_0,&\;\partial_t\eta(0)=\eta_1 &{\rm in}\; \omega.
\nonumber
\end{align}
The solution of the above coupled system formally satisfies the following energy equality:
\begin{align}\label{EI}
\begin{aligned}
&\frac{d}{dt}\Big(\frac{\varrho_f}{2}\|\bu(t)\|^2_{L^2(\Omega_{\eta}(t))}
+\frac{h\varrho_s}{2}\|\partial_t\eta(t)\|^2_{L^2(\omega)}
+\mathcal{E}_K(t,\eta)\Big)= -2\mu\int_{\Omega_{\eta}(t)}|{\rm sym}\nabla\bu|^2.
\end{aligned}
\end{align}
Due to the fact that the Koiter shell equation is non-linear---more precisely since the curvature change is measured w.r.t.\ the deformed geometry---the $H^2$-coercivity of the Koiter energy can become degenerate. This is quantified by the estimate that is shown in Lemma~\ref{H2Bound} below. At such degenerate instant the given existence and regularity proofs break down. This is a phenomenon purely due to the non-linearity of the Koiter shell equations. Indeed, in case when the leading order term of the elastic energy is quadratic (i.e. the equation is linear or semi-linear), this loss of coercivity is a-priori excluded.
%
\subsection{Main results}
Let us now state the main theorems of the paper. 
The first main theorem is the existence of solutions to the non-linear Koiter shell model.
\begin{theorem}
\label{thm:ex}
Assume that $\partial \Omega\in C^3, \eta_0\in H^2(\omega), \eta_1\in L^2(\omega)$ and $\bu_0\in L^2(\Omega_{\eta_0})$, and $\eta_0$ is such that $\Gamma_{\eta_0}$ has no self-intersection and $\gamma(\eta_0)\neq 0$. Moreover, we assume that the compatibility condition $\bu_0|_{\Gamma_{\eta_0}}=\eta_1\bn$ is satisfied. Then there exists a weak solution $(\bu,\eta)$ on the time interval $(0,T)$ to \eqref{FSI} in the sense of Definition~\ref{WeakSolDef}.

Furthermore, one of the following is true: either $T=+\infty$, or the structure self-intersect, or $\gamma(\eta)\neq 0$, i.e. the $H^2$-coercivity of the structure energy degenerates, where $\gamma$ is defined in Definition~\ref{def:coor} below.
\end{theorem}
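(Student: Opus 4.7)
The plan is to combine a suitable approximation scheme with the a-priori regularity estimate of Theorem~\ref{RegTem} (applied uniformly at the approximate level) so that the non-linear leading-order terms of the Koiter operator $\mathcal{L}_K$ can be identified in the limit. I will follow the framework developed for the linear/semi-linear case (Lengeler-Ruzicka~\cite{LenRuz}, Muha-\v{C}ani\'c~\cite{BorSun}) and incorporate the new compactness tools described in the introduction.

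\textbf{Step 1: Approximation.} Construct approximate solutions $(\bu_n,\eta_n)$ by a double regularization. First, replace the non-linear Koiter potential $\mathcal{E}_K(t,\eta)$ by a truncated/convexified version $\mathcal{E}_K^n(t,\eta)$: the coefficients of $\mathcal{L}_K$, which depend non-linearly on first and second derivatives of $\eta$, are frozen/capped whenever the curvature of $\Gamma_\eta$ exceeds a threshold depending on $n$. This yields a semi-linear shell equation for which an existence theory is available. Second, regularize the fluid problem on the moving domain by the solenoidal extension and smoothing operators announced in the introduction, so that one can decouple the fluid-structure system (via Galerkin-in-time and/or Lie splitting). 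This produces a family $(\bu_n,\eta_n)$ satisfying an approximate weak formulation with the usual energy identity~\eqref{EI}.

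\textbf{Step 2: Uniform estimates.} From~\eqref{EI} one obtains uniform bounds
\begin{equation*}
\bu_n \in L^\infty_t L^2_x \cap L^2_t H^1_x, \qquad \partial_t\eta_n \in L^\infty_t L^2(\omega), \qquad \mathcal{E}_K^n(t,\eta_n) \leq C.
\end{equation*}
These alone are not sufficient to pass to the limit in $\mathcal{L}_K \eta_n$, since the energy is non-convex in the highest-order terms. Here Theorem~\ref{RegTem} enters: as its proof relies only on the geometric constitution (the bounds furnished by the energy and the non-degeneracy of $\gamma(\eta)$), it applies uniformly to $\eta_n$, giving
\begin{equation*}
\eta_n \in L^2_t H^{2+s}(\omega) \cap H^1_t H^s(\omega) \qquad \text{uniformly in } n,
\end{equation*}
for every $s<\tfrac12$. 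In $2D$ this embeds $\eta_n(t,\cdot)$ into $W^{2,q}$ for some $q>2$ and in particular into Lipschitz functions, so the parameterization $\bvphi_{\eta_n}$ stays uniformly bi-Lipschitz as long as $\gamma(\eta_n)$ is bounded away from zero and no self-intersection occurs.

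\textbf{Step 3: Compactness and passage to the limit.} By the generalized Aubin-Lions lemma on moving domains (announced in the introduction), the velocity fields $\bu_n$ converge strongly in $L^2_t L^2_{\mathrm{loc}}$ on the moving domains $\Omega_{\eta_n}(t)$. For the structure, the bound $\eta_n \in L^2_t H^{2+s} \cap H^1_t H^s$ combined with the compact embedding $H^{2+s}\cembed H^2$ yields, via a classical Aubin-Lions argument, strong convergence $\eta_n \to \eta$ in $C^0_t H^2_x$ after extracting a subsequence, and $\partial_t\eta_n \rightharpoonup \partial_t\eta$ weakly in $L^2_t L^2_x$. This strong $H^2$-convergence is precisely what is needed to identify the quasi-linear term $\mathcal{L}_K\eta_n \to \mathcal{L}_K\eta$ in the sense of distributions, since the non-linear coefficients depend continuously on $\nabla\eta$ and $\nabla^2\eta$. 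The convergence in the convective and boundary terms of~\eqref{FSI} is handled by combining the strong convergence of $\bu_n$ and the uniform Lipschitz control of the boundary with standard change-of-variables arguments. The regularization parameter of $\mathcal{E}_K^n$ can be removed because the uniform $H^{2+s}$-bound forces the truncation threshold to be inactive in the limit.

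\textbf{Step 4: Maximal existence interval.} Standard continuation: as long as $\Gamma_\eta$ remains injective and $\gamma(\eta)$ stays away from zero, Step~2 gives uniform estimates on compact time intervals, so the solution can be extended. The alternative in the theorem then follows by a maximal-time argument. The main obstacle throughout is Step~3: the non-convexity of the Koiter energy prevents the usual weak-lower-semicontinuity shortcut and forces genuine strong $H^2$-compactness of the displacement, which is only available through the regularity theorem--the key novelty of the paper.
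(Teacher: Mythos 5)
Your overall architecture (regularize, re-derive the regularity estimate of Theorem~\ref{RegTem} uniformly at the approximate level, use the resulting strong compactness of $\nabla^2\eta$ to identify the non-convex Koiter terms, then a continuation argument) is the same as the paper's, but two of your steps contain genuine gaps. First, the approximation scheme: you propose to truncate/convexify the Koiter energy by capping the coefficients of $\mathcal{L}_K$ wherever the curvature exceeds a threshold, and you assert that an existence theory for the truncated problem is ``available'' and that the truncation is removed in the limit because of the uniform $H^{2+s}$ bound. Capping coefficients of the quasi-linear operator in general destroys the variational structure (the modified operator is no longer the $L^2$-gradient of an energy), so the energy inequality --- the starting point of every estimate you use --- is not automatic; and even a convexified energy still has a non-quadratic leading order, so you are implicitly assuming an existence theory that is essentially as hard as the theorem being proved. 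The paper avoids this circularity by adding the \emph{linear} sixth-order term $\varepsilon\int_\omega\nabla^3\eta:\nabla^3\xi\,dy$ in \eqref{WeakSolEqReg}, which makes the leading order linear and allows the splitting scheme of \cite{BorSunnon-linear} to be extended; a nontrivial ingredient you do not address is the discretization of the Fr\'echet derivatives ${\bf G}'$, ${\bf R}'$ (the Newton--Cotes construction \eqref{GDis}--\eqref{RDis}) needed to retain a discrete energy balance for the non-linear bending term. Moreover, your removal of the truncation fails as stated: for $s<\tfrac12$ the bound $\eta\in L^2_tH^{2+s}(\omega)$ controls $\nabla\eta$ in $C^{0,s}$ but only gives $\nabla^2\eta\in L^q$ with $q<4$, so a pointwise curvature threshold is \emph{not} forced to be inactive in the limit. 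Finally, Theorem~\ref{RegTem} is stated for solutions of \eqref{FSI}; to use it uniformly you must re-run the difference-quotient argument (the analogue of Lemma~\ref{StructureRegEstimate}) for your modified elastic forms, which is precisely where a crude truncation can destroy the coercivity structure, whereas the paper's $\varepsilon$-term is harmless because it contributes a non-negative quantity when tested with $D^s_{-h}D^s_h\eta^\varepsilon$.

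Second, the compactness step is weaker than what the limit passage requires. You claim strong $L^2_{\rm loc}$ convergence only for $\bu_n$ and merely weak convergence for $\partial_t\eta_n$, and call the rest ``standard.'' Because the test functions are coupled to the solution (they satisfy the kinematic condition on $\Gamma_{\eta_n}$ and are constructed through the time-dependent solenoidal extension), the weak formulation produces products of two weakly convergent sequences, e.g. $\int\bu_n\cdot\partial_t\bq_n\,dx$ where $\partial_t\bq_n$ carries factors of $\partial_t\eta_n$; hence strong $L^2$ convergence of the \emph{structure} velocity is needed as well. This is exactly the content of Lemma~\ref{lem:l2conv}, where Theorem~\ref{thm:auba} is applied twice together with the smooth solenoidal extension of Corollary~\ref{cor:extend} to verify the equi-continuity condition from the weak formulation --- the step the paper identifies as the most delicate, and which your proposal essentially skips. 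A minor point in the same step: $L^2_tH^{2+s}\cap H^1_tH^s$ does not give $\eta_n\to\eta$ in $C^0_tH^2_x$ via Aubin--Lions; it gives strong convergence in $L^2_tH^2_x$ (and in $L^\infty_tH^{2-\epsilon}_x$ using the energy bound), which is however sufficient to pass to the limit in $a_m$ and $a_b$.
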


The second main theorem says that all possible solutions in the natural existence class satisfy better structural regularity properties.
\begin{theorem}\label{RegTem}
Let $(\bu,\eta)$ be a weak solution to \eqref{FSI} in the sense of Definition~\ref{WeakSolDef}. Then the solution has the additional regularity property\footnote{For the definition of the fractional Sobolev spaces $H^s(\omega)$ see Subsection~\ref{ssec:notation}} 
$\eta\in L^2(0,T;H^{2+s}(\omega))$ and $\partial_t\eta\in L^2(0,T;H^{s}(\omega))$ for $s\in (0,\frac{1}{2}).$
Moreover, it satisfies the following regularity estimate 
\[
\|\eta\|_{L^2(0,T;H^{2+s}(\omega))}+\|\partial_t\eta\|_{L^2(0,T;H^{s}(\omega))}\leq C_1
\]
with $C_1$ depending on $\partial \Omega$, $C_0$ and the $H^2$-coercivity size $\gamma(\eta)$.
\end{theorem}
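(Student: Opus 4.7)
The strategy is to derive the higher regularity via tangential finite-difference quotients on the torus $\omega$, exploiting the fluid dissipation as the sole source of additional smoothness for the otherwise hyperbolic shell displacement. Fix a unit vector $\tau\in\R^2$ and set $\Delta_h f(y):=f(y+h\tau)-f(y)$. Because $\omega$ has no boundary, $\Delta_{-h}$ is the formal adjoint of $\Delta_h$. The target regularity is equivalent to the Nikolskii-type uniform bound
\begin{equation*}
\|\Delta_h\eta\|_{L^2(0,T;H^2(\omega))}^2+\|\Delta_h\partial_t\eta\|_{L^2(0,T;L^2(\omega))}^2\leq C|h|^{2s}\qquad\text{for every }s<\tfrac12.
\end{equation*}
Once this is established for all directions $\tau$, a standard Besov-Nikolskii characterization upgrades it to $\eta\in L^2_tH^{2+s}_x$ and $\partial_t\eta\in L^2_tH^s_x$.

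The plan is to test the weak formulation with $\xi=\Delta_{-h}\Delta_h\eta$ on the structure side and its divergence-free lift $\bvphi$ on the fluid side. The construction of $\bvphi$ is the key technical step and is precisely the ``solenoidal extension'' machinery highlighted in the introduction. Concretely, I need a bounded linear operator $\xi\mapsto\bvphi=\mathcal{E}_\eta[\xi\bn]$ satisfying $\divg\bvphi=0$ in $\Omega_\eta(t)$, $\bvphi\circ\bvphi_\eta=\xi\bn$ on $\Gamma_\eta(t)$, and the sharp trace-type bound $\|\bvphi\|_{H^1(\Omega_\eta(t))}\lesssim\|\xi\|_{H^{1/2}(\omega)}$, with constants depending only on $\|\eta\|_{H^2}$ and the $H^2$-coercivity radius $\gamma(\eta)$. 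It is built by a normal-transport extension of $\xi\bn$ into a tubular neighbourhood of $\Gamma_\eta(t)$, followed by a localized \Bogovskij{} correction to restore the solenoidal condition. Time regularity of $\mathcal{E}_\eta$ with respect to $\eta$ is required so that the composed quantity $\mathcal{E}_\eta[\Delta_{-h}\Delta_h\eta\,\bn]$ can be used as a test function in the weak formulation.

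Inserting this test pair (after a standard Steklov/time mollification to make it admissible) and shifting one copy of $\Delta_{-h}$ to its adjoint $\Delta_h$ by the discrete integration-by-parts identity, the following principal terms emerge. The shell inertia contributes $\int_0^T\|\Delta_h\partial_t\eta\|_{L^2(\omega)}^2\dd t$ modulo controllable endpoint terms; the Koiter operator contributes $\int_0^T\|\Delta_h\eta\|_{H^2(\omega)}^2\dd t$ bounded below by Lemma~\ref{H2Bound} (this is where the $H^2$-coercivity hypothesis $\gamma(\eta)\neq 0$ enters), plus a commutator error from the non-linear coefficients that is absorbed using the $L^\infty_tH^2_x$ energy bound. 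The fluid viscous term $\mu\int{\rm sym}\nabla\bu:{\rm sym}\nabla\bvphi$ is estimated by $\|\bu\|_{L^2_tH^1_x}\|\bvphi\|_{L^2_tH^1_x}\lesssim\|\bu\|_{L^2_tH^1_x}\|\Delta_{-h}\Delta_h\eta\|_{L^2_tH^{1/2}_y}$; using the elementary bound $\|\Delta_h f\|_{H^{1/2}}\lesssim|h|^{1/2-\varepsilon}\|f\|_{H^{1-\varepsilon}}$ for $f=\Delta_h\eta\in L^\infty_tH^2$ and Young's inequality yields a term of order $|h|^{2s}$ for every $s<\tfrac12$, which is precisely the threshold stated in the theorem. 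The fluid time derivative and convection are handled via the Reynolds transport identity together with the boundary identity $\bvphi\circ\bvphi_\eta=\xi\bn$, producing lower-order terms that are bounded by the basic energy \eqref{EI}; the pressure term vanishes since $\divg\bvphi=0$.

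The principal obstacle I expect is twofold. First, building $\mathcal{E}_\eta$ with the sharp $H^{1/2}\!\to\!H^1$ estimate while ensuring enough regularity in the parameter $\eta$ for the whole test-function construction to be justified at the regularity level available a priori ($\eta\in L^\infty_tH^2_x\cap W^{1,\infty}_tL^2_x$). Second, tracking the commutator $[\Delta_h,\mathcal{L}_K]\eta$ arising from the non-linear dependence of $\mathcal{L}_K$ on $\eta$ itself: the top-order part must be shown to produce a genuinely lower-order perturbation, rather than a term of the same order as $\|\Delta_h\eta\|_{H^2}^2$, which is again where the $H^2$-coercivity quantified by $\gamma(\eta)$ is essential. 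Once these two points are handled, the pointwise-in-$h$ inequality above combined with the Nikolskii characterization of fractional Sobolev spaces yields the claimed regularity and estimate, with $C_1$ depending only on $\partial\Omega$, the initial energy $C_0$, and $\gamma(\eta)$.
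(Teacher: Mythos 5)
Your overall strategy --- tangential difference quotients tested through a solenoidal normal-transport extension with a \Bogovskij{} correction, coercivity of the bending form via $\gamma(\eta)$ (Lemma~\ref{H2Bound}), and a Nikolskii characterization at the end --- is the same as the paper's. However, there are two genuine gaps in the way you propose to close the estimate.

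First, the inertia term has the mechanism backwards. Testing the structure part with $\xi=\Delta_{-h}\Delta_h\eta$ and integrating in time, the term $-\int_0^T\int_\omega\partial_t\eta\,\partial_t\xi\,dy\,dt$ equals $-\int_0^T\|\Delta_h\partial_t\eta\|_{L^2(\omega)}^2\,dt$ up to corrector and endpoint terms: it carries the \emph{opposite} sign to the coercive Koiter contribution, so it cannot ``contribute'' the bound $\|\Delta_h\partial_t\eta\|^2_{L^2_tL^2_x}\leq C|h|^{2s}$ --- that bound is precisely what you must already possess in order to close the inequality for $\|\Delta_h\eta\|_{L^2_tH^2_x}$, so as written the argument is circular. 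In the paper this term is dominated using the kinematic coupling \eqref{kinematic} and the trace theorem: $\partial_t\eta\,\bn=\bu\circ\bvphi_\eta$ together with $\bu\in L^2_tH^1_x$ gives $\partial_t\eta\in L^2(0,T;H^{s}(\omega))$ for every $s<\frac12$ directly, and this is also the source of the time-regularity statement in the theorem; it is not produced by the tested structure equation (the paper remarks that this term could not be estimated in a purely hyperbolic problem). Your proposal never invokes the trace theorem for $\partial_t\eta$, so this piece is missing.

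Second, the term $\int_0^T\int_{\Omega_{\eta}(t)}\bu\cdot\partial_t\bvphi\,dx\,dt$, with $\bvphi$ the time-dependent extension of $\Delta_{-h}\Delta_h\eta$, is not ``lower order, bounded by the basic energy''. Since the extension depends on $\eta(t)$, $\partial_t\bvphi$ contains the extension of $\Delta_{-h}\Delta_h\partial_t\eta$ plus terms carrying $\partial_t\eta$ as a coefficient; $\Delta_{-h}\Delta_h\partial_t\eta$ has neither smallness of order $|h|^{2s}$ nor regularity beyond $L^2_tH^s_x$, so a direct estimate does not close. This is exactly what Lemma~\ref{lem:IbP} is for: one difference quotient is moved by summation by parts onto $\bu\circ\psi_\eta\circ\bfPhi$ in the flattened coordinates, paying $|h|^{\theta}$ through $\nabla\bu\in L^2$ and the H\"older continuity of $\eta$, and the \Bogovskij{} correction is controlled in negative Sobolev norms; without this commutator machinery the estimate fails. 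Two smaller points: your \Bogovskij{} correction needs a mean-zero compatibility condition, which the paper enforces through the corrector $\cor$ and which you do not address; and the sharp bound $\|\bvphi\|_{H^1(\Omega_\eta(t))}\lesssim\|\xi\|_{H^{1/2}(\omega)}$ you postulate is not available on the merely H\"older boundary $\Gamma_\eta(t)$ --- fortunately it is also unnecessary, since $\|\Delta_{-h}\Delta_h\eta\|_{H^1(\omega)}\lesssim|h|^{2s}\|\eta\|_{H^{1+2s}(\omega)}$ already supplies the required smallness when combined with the $H^1$-to-$H^1$ type bounds of Proposition~\ref{TestFunction}.
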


\begin{remark}[On the coercivity parameter $\gamma(\eta)$]
Here we briefly discuss  the dependence of constant $C_1$ in Theorem \ref{RegTem} on the coercivity parameter $\gamma(\eta)$. The detailed analysis can be viewed in Section~
\ref{sec:thm2}. First, one can use the energy inequality to deduce that $\eta$ is continuous in space and time, see Lemma~\ref{W14Bound}. The coercivity constant $\gamma(\eta)$ is explicitly calculated in Lemma~\ref{H2Bound}, where it is shown that $\gamma(\eta)$ continuously depends on $\eta$. Consequently (in dependence of the initial state) coercivity in $H^2$ holds at least for some time interval. In particular, on this time interval constant $C_1$ in Theorem~\ref{RegTem} depends only on  $\partial\Omega$ and $C_0$.
If the solution then did not reach a state of degeneracy $(\gamma(\eta)=0$) or self-intersection, then it can be (iteratively) prolonged. In order to derive the estimate for the prolonged solution, one should revise the dependencies in Theorem \ref{RegTem} and this is the reason why in Theorem \ref{RegTem} constant $C_1$ depends on $\gamma(\eta)$. Note in particular, that in some cases $\gamma(\eta)$ can be explicitly computed, see Examples 1 and 2 in the next Section, where very natural conditions for the exclusion of degeneracy are computed.
\end{remark}

\begin{remark}
\label{rem:boundary}
Since here $\partial\Omega$ is assumed to be a manifold without boundary, there is no boundary condition for $\eta$. We restrict ourselves to this case just for technical simplicity. The proof for the case where only a part of the boundary is elastic (see e.g. \cite{BreitSchwarzacher,LenRuz}), with the appropriate Dirichlet boundary conditions, is completely analogous. In particular, the regularity estimate Theorem~\ref{RegTem} is valid for the case of non-periodic shells and the proof can be adapted without any significant complications by using the zero extension of $\eta$ to the whole boundary.
\end{remark}
\begin{remark}
In the proof of Theorem \ref{RegTem} we use the no-slip coupling condition \eqref{kinematic} to transfer the fluid regularity to the structure. Here the exact form of the structure equation~\eqref{Structure} is not essential. Therefore the proof can be easily adapted to different structure models as long as we have that the corresponding structure energy is coercive in the $H^2$ norm. In particular, our result implies respective estimates for weak solutions to several FSI problems that were already studied in the literature and mentioned in the Introduction, e.g. \cite{CDEM,CG,LenRuz,BorSun,BorSunnon-linear}. The geometric condition $\gamma(\eta)\neq 0$ is needed for the $H^2$-coercivity in the non-linear setting. Clearly the $H^2$-coercivity is satisfied independently of $\gamma(\eta)$ when the leading order term in the Koiter energy is quadratic. Hence, in the case when the structure equation is linear or semi-linear the regularity result is valid until a self-intersection is approached.
\end{remark}
\subsection{Novelty \& Significance}

\noindent
The main novelty is the improved regularity of the elastic displacement. In particular it allows to overcome the border between Lipschitz and non-Lipschitz domains. This critical step has caused a significant amount of effort in previous works~\cite{CDEM,CG,LenRuz,BorSun,BorSunnon-linear}. The regularity uses classical differential quotient techniques applied to the non-linear structure equation. Problematic is the impact of the fluid on the structure which is rather implicit in the frame-work of weak equations. Here newly developed extension-operators are developed that are certainly of independent value (see Proposition~\ref{TestFunction}). Of critical technical difficulty are commutator estimates for the time dependent extension of a difference quotient (see Lemma~\ref{lem:IbP}). 

%

The power of the newly introduced method to gain higher regularity for the structure allows to prove the existence of weak solutions for fluids interacting with non-linear Koiter shells. These more physical models have not been in reach for the theory of weak solutions that may exists for arbitrary long times. The mathematical reason is that the respective energies are highly non-linear and non-convex. The extra regularity estimate however, allows to derive sequences that converge strongly to the solution w.r.t\ the highest order of the operator. That is the reason why no {\em linearity or convexity assumptions} are needed anymore to pass to the limit with in the non-linear stress tensor of the structure equation. 

For previous results, the limit passage of the convective term in the Navier Stokes equation was the main effort~\cite{CDEM,CG,LenRuz,BorSun,BorSunnon-linear}. The limit passage usually relies on compactness results of Aubin-Lions type. The variable geometry make its application highly technical. In Section~\ref{sec:auba} we rewrite the celebrated result in a form that we believe to be suitable for coupled systems (see Theorem~\ref{thm:auba}). Indeed, it can be applied to systems where the solution space depends on the solution itself. This section can be seen as the second main technical novelty. 

The interval of existence is potentially arbitrary large. The interval of existence is restricted to cases when the geometry degenerates. However, the minimal interval of existence depends on the reference geometry (which defined the shell model) and can be arbitrary large for some commonly used models. We demonstrate this by providing explicit bounds for two popular reference geometries in Subsection~\ref{sec:shell}; namely the case when the reference geometry is a sphere or a cylinder. 

The method seems very suitable to be adapted for further interaction problems. Possible future applications for FSI problems are in the field of membrane energies, compressible fluids, tangential displacements, uniqueness issues and/or numerical analysis. In two space dimensions or in the regime of low Reynolds numbers the method inherits great potential to further improve the regularity theory for the FSI problems~\cite{grandmont2016existence}. However, due to the lack of the global regularity result for the fluid equations in three dimensions, we do not expect it is possible to further improve the regularity of the shell.
In this sense our regularity result for the shell displacement can be viewed as optimal.


%
This is the point to mention that some applications of our results are already available. Firstly, there is a preprint~\cite{SchSor20} where the additional regularity for weak solution to {\em linear plates} is used in order to obtain {\em weak-strong uniqueness} results. In the case of $3D$ fluids, the proof depends crucially on the extra regularity obtained here. Secondly, the results where used in a recent preprint~\cite{Breit} where Koiter shells are coupled to polymer fluids. Thirdly, we wish to mention a work in progress on heat-conducting fluids~\cite{BreSch20} where the Navier-Stokes-Fourier system is considered. In order to obtain an energy equality, that is an essential part of the concept of weak solutions, compactness of the elastic solid energy is crucial.  Hence, this result also depends sensitively on the regularity techniques presented here. Finally, our methodology for the regularity estimate was also used in the proof of a weak-strong uniqueness result for FSI problems with compressible fluid \cite{Srdjan}.

{\bf Outline of the paper:}
The next section first derives the Koiter energy w.r.t.\ our chosen coordinates, gives two explicit examples of Koiter energies with respective geometric restrictions on $
\alpha(\Omega),\beta(\Omega),
\gamma(\eta)$, and introduces the definition of a weak solution for fluid-structure interactions. Section~\ref{sec:Test} is the technical heart of the paper since there the solenoidal extension and approximation operators are introduced.
 In Section~\ref{sec:thm2}, we give the proof of the regularity result Theorem~\ref{RegTem}. Section~\ref{sec:auba} provides a new version of Aubin-Lions compactness result that reveals the connection between the existence of suitable extensions and compactness results for fluid-structure interactions involving elastic shells. Finally, in Section~\ref{sec:thm1} we show Theorem~\ref{thm:ex}; the existence is shown by combining the extra regularity of the shell with the compactness theory.

\section{Weak solutions}
\subsection{The elastic energy}
\label{sec:shell}

\noindent
{\bf Coordinates.}

Here we follow the strategy of~\cite[Section 2]{LenRuz} by introducing the following coordinates {attached to the reference geometry $\Omega$ which are well defined in the tubular neighborhood of $\partial\Omega$ (see e.g.~\cite[Section 10]{lee2003introduction} and Figure~\ref{CoordFig} for an illustration). 
\begin{figure}[h]
\includegraphics[width=\linewidth]{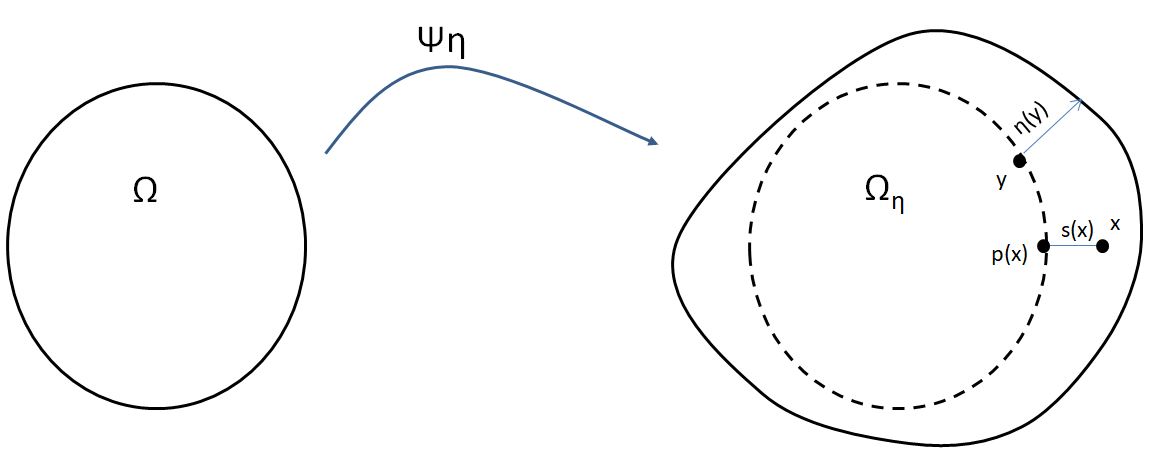}
\caption{Cross section of cylindrical domain, its deformation and corresponding coordinate system.}
\label{CoordFig}
\end{figure}
\begin{definition}
\label{def:coor}
Let $x$ be a point in the neighborhood of $\partial\Omega$. We define the distance parameter with respect to the reference point
\[
 \by(x)=\text{ arg min}_{\by\in \omega}\abs{x-\bvphi(\by)}, \quad s(x)=(x-y(x))\cdot \bn(x)
\]
and the projection $\bfp(x)=\bvphi(\by(x))$. 


We define the numbers $\alpha(\Omega),\beta(\Omega)$ so that $(\alpha(\Omega),\beta(\Omega))$ is the largest open interval such that numbers $s(x),\bfp(x),\by(x)$ are uniquely defined over 
$ \{\bvphi(y)+s\bn:y\in\omega,\; s\in (\alpha(\Omega),\beta(\Omega))\}$.

For $\kappa>0$ we introduce the indicator mapping $\sigma_\kappa\in C^\infty(\R)$, such that
\[
\sigma_\kappa(s)=1\text{ for } s\in (\alpha(\Omega)+\kappa,\infty),\,\sigma_\kappa(s)=0\text{ for }s\leq\alpha(\Omega)+\frac{\kappa}2\text{ and }\sigma_\kappa'\geq 0.
\] 
We set
\[
S_{\kappa}=
\{\bvphi(y)+s\bn(y):(s,y)\in [\alpha(\Omega)+\kappa,\beta(\Omega)-\kappa]\times \omega\}
\]
Further
\[
 Q^\kappa=S_\kappa\cup \Omega\text{ and }Q_\kappa=\Omega\setminus S_\kappa. 
\]
 In particular we have a clear information on the support of the derivative:
\[\Acal_\kappa:=
\{\bvphi(y)+s\bn(y):(s,y)\in [\alpha(\Omega)+\kappa/2,\alpha(\Omega)+\kappa]\times \omega\}\supset\mathrm{supp}(\sigma_\kappa'(s(\cdot))).
\] 
For $\eta(\by)\in  (\alpha(\Omega)+\kappa,\beta(\Omega)-\kappa)$, this allows to introduce the mapping $\bpsi_\eta(t,.):\Omega\to\Omega_{\eta(t)}$ by
\[
z\mapsto \Big (1-\sigma_\kappa(s(z))\Big )z+\sigma_\kappa(s(z))\Big (\bfp(z)+\big (\eta(\by(z))+s(z)\big )\bn(\by(z))\big ),
\]
and $\bpsi_\eta^{-1}(t,.):\Omega_{\eta(t)}\to\Omega$ by 
\[
x\mapsto \Big (1-\sigma_\kappa(s(x))\Big )x+\sigma_\kappa(s(x))\Big (\bfp(x)+\big (s(x)-\eta(\by(x))\big )\bn(\by(x))\Big ).
\]

Moreover we define 
\[
\bfPhi : (\alpha(\Omega)+\kappa,\beta(\Omega)-\kappa)\times \omega \to S_\kappa,\quad \bfPhi(s,y)=\bvphi(y)+\bn(y)s.
\]
The function is smooth and invertible in dependence of $\bvphi$ and $\kappa$.
This implies that 
\[
\psi_\eta\circ\bfPhi : (\alpha(\Omega)+\kappa,0)\times \omega \to \Omega_\eta,
\quad  \psi_\eta\circ\bfPhi(s,y)= \bvphi(y)+(s+\sigma_\kappa(s)\eta(y))\bn(y)
\]

Finally, we define the following geometric quantity depending on $\partial\Omega$ and $\eta$:
\begin{equation}\label{GammaDef}
\gamma(\eta)=\frac{1}{|a_1\times a_2|}\Big (|a_1\times a_2|+
\eta(\bn\cdot(\ba_1\times\partial_2\bn+\partial_1\bn\times\ba_2))
+\eta^2\bn\cdot(\partial_1\bn\times\partial_2\bn)\Big ).
\end{equation}
\end{definition}

\begin{remark}
The numbers $\alpha(\Omega),\beta(\Omega)$ do not have to be small. For example, if $\Omega$ is a ball or a cylinder with radius $R$, then $(\alpha(\Omega),\beta(\Omega))=(-R,\infty)$. {The geometric quantity $\gamma(\eta)$ is connected to the $H^2$-coercivity of the non-linear structure energy and its meaning is clarified in Lemma~\ref{H2Bound} and Remark~\ref{rem:T}.}
\end{remark}

\noindent
{\bf Derivation of the elastic energy.}

The non-linear Koiter model is given in terms of the differences of the first and the second fundamental forms of $\Gamma_{\eta}(t)$ and $\Gamma$. The tangent vectors to the deformed boundary are given by:
\begin{equation}\label{DeformedTangent}
\ba_{\alpha}(\eta)=\partial_{\alpha}\bvphi_{\eta}=\ba_{\alpha}+\partial_{\alpha}\eta\bn+\eta\partial_{\alpha}\bn,\; \alpha=1,2.
\end{equation}
Therefore, the components of the first fundamental form of the deformed configuration are given by:
\begin{equation}\label{1Deformed}
a_{\alpha\beta}(\eta)=\ba_{\alpha}(\eta)\cdot\ba_{\beta}(\eta)
=a_{\alpha\beta}+\partial_{\alpha}\eta\partial_{\beta}\eta
+\eta\big (\ba_{\alpha}\cdot\partial_{\beta}\bn+\ba_{\beta}\cdot\partial_{\alpha}\bn\big )
+\eta^2\partial_{\alpha}\bn\cdot\partial_{\beta}\bn.
\end{equation}
We define the change of metric tensor $\bG(\eta)$:
\begin{equation}\label{ChangeMetric}
G_{\alpha\beta}(\eta)=a_{\alpha\beta}(\eta)-a_{\alpha\beta}
=\partial_{\alpha}\eta\partial_{\beta}\eta
+\eta\big (\ba_{\alpha}\cdot\partial_{\beta}\bn+\ba_{\beta}\cdot\partial_{\alpha}\bn\big )
+\eta^2\partial_{\alpha}\bn\cdot\partial_{\beta}\bn.
\end{equation}
The normal vector to the deformed configuration is given by:
\begin{equation}\label{NormalDeformed}
\begin{array}{c}
\bn(\eta)=\ba_1(\eta)\times \ba_2(\eta)
=|\ba_1\times \ba_2|\bn+\partial_2\eta(\ba_1\times\bn+\eta\partial_1\bn\times\bn)
\\ \\
+\partial_1\eta(\bn\times\ba_2+\eta\bn\times\partial_2\bn)+ \eta(\ba_1\times\partial_2\bn+\partial_1\bn\times\ba_2)
+\eta^2(\partial_1\bn\times\partial_2\bn).
\end{array}
\end{equation}
Notice that $\bn(\eta)$ is not a unit vector. We follow our reference literature~\cite{CiarletKoiter} and use the following tensor $\bR$ (denoted by $\bR^{\#}$ in \cite[Section 4]{CiarletKoiter}) which is some non-normalized variant of the second fundamental form to measure the change of curvature:
\begin{equation}\label{CurvatureChange}
R_{\alpha\beta}(\eta)=\frac{1}{|\ba_1\times\ba_2|}\partial_{\alpha}\ba_{\beta}(\eta)\cdot\bn(\eta)
-\partial_{\alpha}\ba_{\beta}\cdot\bn,\; \alpha,\beta=1,2.
\end{equation}
Finally, we define the elasticity tensor in the classical way~\cite[Theorem 3.2]{CiarletKoiter}:
\begin{equation}\label{ElasticityTensor}
\mathcal{A}\mathbf{E}=\frac{4\lambda\mu}{\lambda+2\mu}(\mathbf{A}: \mathbf{E})\mathbf{A}+4\mu\mathbf{A}\mathbf{E}\mathbf{A},\quad \mathbf{E}\in {\rm Sym}(\R^{2\times 2}).
\end{equation}
Here $\mathbf{A}$ is the contravariant metric tensor associated with $\partial\Omega$ ({see e.g. \cite[Section 2]{CiarletKoiter} for the precise definition of $\mathbf{A}$}), and $\lambda>0$, $\mu>0$ are the Lam\' e constants. The Koiter energy of the shell is given by:
\begin{equation}\label{KoiterEnergy}
\mathcal{E}_{K}(t,\eta)=\frac{h}{4}\int_{\omega}\mathcal{A}\mathbf{G}(\eta(t,.)):\mathbf{G}(\eta(t,.))dy
+\frac{h^3}{48}\int_{\omega}\mathcal{A}\bR(\eta(t,.)):\bR(\eta(t,.))dy,
\end{equation}
where $h$ is the thickness of the shell. In order to simplify the notation we introduce the following forms connected to the membrane and bending effects in the variational formulation:
\begin{align}\label{ElasticForms}
a_m(t,\eta,\xi)=\frac{h}{2}\int_{\omega}\mathcal{A}\bG(\eta(t,.)): \bG'(\eta(t,.))\xi\, dy,
\\
a_b(t,\eta,\xi)=\frac{h^3}{24}\int_{\omega}\mathcal{A}\bR(\eta(t,.)): \bR'(\eta(t,.))\xi\, dy,
\end{align}
where $\bG'$ and $\bR'$ denote the Fr\' echet derivatives of $\bG$ and $\bR$ respectively. Therefore, the elastodynamics of the shell is given by the following variational formulation:
\begin{equation}
h\varrho_s\frac{d}{dt}\int_{\omega}\partial_t\eta(t,.)\xi\, dy+
a_m(t,\eta,\xi)+a_b(t,\eta,\xi)=\int_{\omega}g\xi\, dy\; {\rm on}\; (0,T),\; \xi\in W^{2,p}(\omega), 
\end{equation}
where $\varrho_s$ is the structure density, $g$ is the density of area force acting on the structure, and $p>2$. We denote the elasticity operator by $\mathcal{L}_K$ which is formally given by
\begin{equation}\label{ElasticityOp} 
\langle \mathcal{L}_K\eta,\xi\rangle = a_m(t,\eta,\xi)+a_b(t,\eta,\xi),\quad \xi\in W^{2,p}(\omega).
\end{equation}
Next we give some examples for which we can calculate our restrictive numbers $\alpha(\Omega),\beta(\Omega)$ and $\gamma(\eta)$.

\smallskip
{\bf Example 1: Cylindrical Koiter shell}

\noindent
The parameterization of the reference cylinder is given by $\bvphi(\theta,z)=(R\cos\theta,R\sin\theta,z)$, $(\theta,z)\in \omega=(0,2\pi)\times (0,1)$, where $R>0$ is the radius of the cylinder. We compute
$$
\ba_1(\theta,z)=(-R\sin\theta,R\cos\theta,0),\;
\ba_2(\theta,z)=(0,0,1),\;
\bn(\theta,z)=(\cos\theta,\sin\theta,0).
$$ 
The corresponding contravariant metric tensor is given by 
$A=\left (\begin{array}{cc}\frac{1}{R^2} & 0 \\ 0 & 0\end{array} \right ).$
The deformation of the cylindrical boundary is given by:
$$
\bvphi_{\eta}(\theta,z)=(R\cos\theta+\eta(\theta,z)\cos\theta,R\sin\theta+\eta(\theta,z)\sin\theta,z).
$$
Straightforward calculation yields:
$$
\ba_1(\eta)=(1+\frac{1}{R})\ba_1+\eta_{\theta}\bn,\;
\ba_2(\eta)=\ba_2+\eta_z \ba_3,
$$
$$
\bn(\eta)=(R+\eta)\bn-\eta_z(R+\eta)\ba_2+\frac{\eta_{\theta}}{R}\ba_1.
$$
Therefore, the change of metric tensor is given by
$$
\bG(\eta)=\left (\begin{array}{cc}(R+\eta)^2+\eta_{\theta}^2-R^2 & \eta_{\theta}\eta_z \\ \eta_{\theta}\eta_z & 1+\eta_z^2\end{array} \right ),
$$
and the change of curvature tensor by
$$
\bR(\eta)=\left (\begin{array}{cc}(1+\frac{\eta}{R})\eta_{\theta\theta}-\frac{1}{R}(\eta+R)^2-2\frac{\eta_\theta^2}{R}+R &(1+\frac{\eta}{R})\eta_{\theta z}-\frac{1}{R}\eta_{\theta}\eta_z \\ (1+\frac{\eta}{R})\eta_{\theta z}-\frac{1}{R}\eta_{\theta}\eta_z & (1+\frac{\eta}{R})\eta_{zz} \end{array} \right ).
$$
Here $(\alpha(\Omega),\beta(\Omega))=(-R,\infty)$ and $\gamma(\eta)=1+\frac{\eta}{R}$.

\smallskip
{\bf Example 2: Spherical shell}

\noindent
Strictly speaking, the sphere does not fit in our framework since it does not have a global parameterization. However, this assumption was introduced just for technical simplicity and can be easily removed by working with local coordinates. In this example we consider an elastic sphere with holes around north and south poles. On these holes we prescribe the boundary condition for the fluid flow, e.g. inflow/outflow or Dirichlet. The shell is clamped on the boundary of the holes (see Figure~\ref{fig:3d} for illustration}). More precisely, the parameterization is given by
$$
\bvphi(\theta,\phi)=R(\cos\theta\sin\phi,\sin\theta\sin\phi,\cos\phi),\; (\theta,\phi)\in w=(0,2\pi)\times (a,\pi-a),
$$
where $R>0$ is the radius of the sphere, and $a>0$ is the parameter determining the size of the holes. We compute the tangent and normal vectors to the reference configuration
$$
\ba_1=-R(\sin\theta\sin\phi,\cos\theta\sin\phi,0),\;
\ba_2=R(\cos\theta\cos\phi,\sin\theta\cos\phi,-\sin\pi),\;
$$
$$
\bn=-(\cos\theta\sin\phi,\sin\theta\sin\phi,\cos\phi).
$$
The contravariant metric tensor is given by $A=\left (\begin{array}{cc}\frac{1}{R^2\sin^2\phi} & 0 \\ 0 & \frac{1}{R^2}\end{array} \right ),$ and the deformation of the cylindrical boundary by
$$
\bvphi_\eta(\theta,\phi)=(R-\eta(\theta,\phi))(\cos\theta\sin\phi,\sin\theta\sin\phi,\cos\phi).
$$
We calculate the tangent and normal vectors to the deformed configuration:
$$
\ba_1(\eta)=(1-\frac{\eta}{R})\ba_1+\eta_{\theta}\bn,\;
\ba_2(\eta)=(1-\frac{\eta}{R})\ba_2+\eta_{\phi}\bn,
$$
$$
\bn(\eta)=(R-\eta)^2 \sin\phi\bn-(1-\frac{\eta}{R})\Big (\frac{\eta_{\theta}}{\sin\phi} \ba_1+\eta_{\phi}\sin\phi\ba_2 \Big ).
$$
The change of the metric tensor is given by
$$
\bG(\eta)=\left (\begin{array}{cc}\eta_\theta^2+(\sin\phi)^2\eta(\eta-2R) & \eta_{\theta}\eta_\phi \\ \eta_{\theta}\eta_\phi & \eta_{\phi}^2+(R-\eta)^2-R^2\end{array} \right ).
$$
Finally, the components of the change of curvature tensor are given by
\begin{align*}
R_{11}(\eta)&=\frac{1}{2R^2}\Big (-2\eta^3\sin^2\phi+\eta^2(6R\sin^2\phi+\eta_{\phi}\sin 2\phi+2\eta_{\theta\theta})
\\
&\quad -2\eta (3R^2\sin^2\phi+R\eta_{\phi}\sin 2\phi+2\eta_{\theta}^2+2R\eta_{\theta\theta})+R(R\eta_{\phi}\sin 2\phi+4\eta_{\theta}^2+2R\eta_{\theta\theta})\Big )
\end{align*}
$$
R_{12}(\eta)=R_{21}(\eta)=\frac{\eta-R}{R^2}\Big (
\eta_{\theta}(R\cot\phi-\eta\cot\phi-2\eta_{\phi})
+\eta_{\theta\phi}(\eta-R)\Big )
$$
$$
R_{22}(\eta)=\frac{1}{R^2}\Big (-\eta^3+\eta^2 (3R+\eta_{\phi\phi})+R(2\eta^2_{\phi}+R\eta_{\phi\phi})-\eta(2\eta_{\phi}^2+R(3R+2\eta_{\phi\phi}))\Big ).
$$
The clamped boundary conditions are $\eta=\partial_{\phi}\eta=0$, $\phi=a,\pi-a$. Since we will take finite differences of order less than $1$, we can extend $\eta$ by zero (over the poles) and complete all estimates related to the regularity.
Here $(\alpha(\Omega),\beta(\Omega))=(-\infty,R)$ and $\gamma(\eta)=\frac{(\eta-R)^2}{R^2}$.

\subsection{Weak coupled solutions}
We use here the standard notation of Bochner spaces related to Lebesgue and Sobolev spaces. We will use bold letters for vector valued functions in three dimensions. Usually we take $y\in \omega$ to be a two dimensional variable and $x$ as a three dimensional variable.
In order to define weak solutions, let us first define the appropriate function spaces:
\begin{align}
\label{SolSpace}
\begin{aligned}
&\mathcal{V}_\eta(t)=\{\bu\in H^1(\Omega_{\eta}(t)):\divg\bu=0\},
\\
&\mathcal{V}_F=L^{\infty}(0,T;L^2(\Omega_\eta(t))\cap L^2(0,T;V_{\eta}(t)),
\\
&\mathcal{V}_K=L^{\infty}(0,T;H^2(\omega))\cap W^{1,\infty}(0,T;L^2(\omega)),
\\
&\mathcal{V}_S=\{(\bu,\eta)\in\mathcal{V}_F\times\mathcal{V}_K:\bu(t,\bvphi_\eta(t,.))=\partial_t\eta(t,.)\bn(\eta(t,.))\},%
\\
&\mathcal{V}_T=\{(\bq,\xi)\in\mathcal{V}_F\times\mathcal{V}_K:\bq(t,\bvphi_\eta(t,.))=\xi(t,.)\bn(\eta(t,.)),\; \partial_t\bq\in L^2(0,T;L^2(\Omega_{\eta}(t))\}.
\end{aligned}
\end{align}
Here $\mathcal{V}_S$ and $\mathcal{V}_F$ are solution and test space respectively.
Even though for $\eta\in\mathcal{V}_K$, $\Omega_{\eta}(t)$ is not necessary a Lipschitz domain, the traces used in definitions \eqref{SolSpace} and \eqref{TestFunction} are well defined, see Corollary 2.9. from \cite{LenRuz} (see also \cite{CDEM,BorisTrag}).
We introduce the concept of solution which we will consider here. Observe, that from this point on we normalize all physical constants $\rho_s=\rho_f=h=\mu=\lambda=1$ for notational simplicity since the proofs require just positivity of these constants. We emphasize that the restrictions on existence and regularity are only of geometrical nature. This can be quantified by $\alpha(\Omega)$ and $\beta(\Omega)$ depending only on the reference geometry, and $\gamma(\eta)$ depending on the reference geometry and on the particular magnitude and direction of the displacement, but not on the above physical constants. 
\begin{definition}[Weak solution]\label{WeakSolDef} We call $(\bu,\eta)\in \mathcal{V}_S$ a weak solution of problem \eqref{FSI} if it satisfies the energy inequality \eqref{EI-weak} and for every $(\bq,\xi)\in\mathcal{V}_T$ the following equality holds in $\mathcal{D}'(0,T)$
\begin{equation}\label{WeakSolEq}
\begin{array}{c}
\displaystyle{\frac{d}{dt}\int_{\Omega_{\eta}(t)}\bu\cdot\bq\, dx+\int_{\Omega_{\eta}(t)}\Big (-\bu\cdot\partial_t\bq-\bu\otimes\bu:\nabla\bq+2\sym\nabla\bu:\sym\nabla\bq\Big )\, dx}
\\
\displaystyle{+\frac{d}{dt}\int_{\omega}\partial_t\eta\xi\, dy-\int_{\omega}\partial_t\eta\partial_t\xi+a_m(t,\eta,\xi)+a_b(t,\eta,\xi)\,d\by=0,}
\end{array}
\end{equation}
Furthermore, the initial values $\eta_0,\eta_1,\bu_0$ are attained in the respective weakly continuous sense.
\end{definition}
By formally multiplying \eqref{FSI}$_1$ by $\bu$ and \eqref{FSI}$_2$ by $\partial_t\eta$,  integrating over $\Omega_{\eta}(t)$ and $\omega$ respectively, integrating by parts and using the coupling conditions \eqref{FSI}$_4$, we obtain the energy inequality (see e.g. \cite{CDEM,BorSun} for details of the computations related to the change of the domain and the convective term):
\begin{align}\label{EI-weak}
\begin{aligned}
&\frac{1}{2}\|\bu(t)\|^2_{L^2(\Omega_{\eta}(t))}
+\frac{1}{2}\|\partial_t\eta(t)\|^2_{L^2(\omega)}
+\mathcal{E}_K(t,\eta)+2\int_0^t\int_{(\Omega_{\eta}(t))}|{\rm sym}\nabla\bu|^2\, dx\, dt 
\\
&\quad \leq 
\frac{1}{2}\|\bu_0\|^2_{L^2(\Omega_{\eta_0})}
+\frac{1}{2}\|\eta_1\|^2_{L^2(\omega)}
+\mathcal{E}_K(0,\eta_0)=:C_0.
\end{aligned}
\end{align}

\subsection{Fractional spaces}
\label{ssec:notation}
In the paper, we use the standard definitions of Bochner spaces related to Lebesgue and Sobolev spaces. In particular, we consider {\em fractional Sobolev spaces} and {\em Nikolskii spaces}. We recall their definitions here.

For $\alpha\in (0,1)$ (the order of derivative) and $q\in [1,\infty)$ (the exponent of integrability) we say that $g\in W^{\alpha,q}(A)$, for a domain $A\subset \Rbb^d$ if its norm
\[ \norm{g}_{W^{\alpha,q}(A)}^q:=\bigg(\int_{A}\int_A\frac{\abs{g(x)-g(y)}^q}{\abs{x-y}^{n+\alpha q}} dx\, dy\bigg)^\frac1q+\bigg(\int_A\abs{g(x)}^q dx \bigg)^\frac1q
\]
is finite. Fractional Sobolev spaces can be extended to higher order. For $\alpha\in [k,k+1)$ with $k\in \Nbb$ it is said that $g\in W^{\alpha,q}(A)$, if all partial derviatives of order up to $k$ are in $W^{\alpha-k,q}(A)$.
In the particular case $q=2$ we use the abbreviation
\[
H^{s}(A)\equiv W^{s,2}(A)\text{ for }s\in [0,\infty). 
\]
We say that $g\in N^{\alpha,q}(A)$ if its norm
\[ \norm{g}_{N^{\alpha,q}(A)}:=\sup_{i\in\{1,...,d\}}\sup_{h\neq 0}\bigg(\int_{A_h}\Big |\frac{g(x+he_i)-g(x)}{\abs{h}^{\alpha-1}h}\Big|^q dx\bigg)^\frac1q+\bigg(\int_A\abs{g(x)}^q dx\bigg)^\frac1q ,
\]
where $e_i$ is the $i$-th unit vector and $A_h=\{x\in A:\text{ dist}(x,\partial A)>h\}$, is finite.
Nikolskii spaces are closely related to fractional Sobolev spaces $W^{\alpha,q}(A)$.  
Let us just mention that for $0<\alpha<\beta<1$ and a bounded domain $A$ we have
\[
W^{\beta,q}(A)\subset N^{\beta,q}(A)\subset 
W^{\alpha,q}(A).
\]
Recall also that for fractional Sobolev spaces an embedding theorem is available for a Lipschitz domain $A\subset\Rbb^n$ and $g\in N^{\beta,q}(A)$ and $0<\alpha<\beta<1$ we have for $\alpha q<n$ that
\begin{align}
\label{eq:se}
 \norm{g}_{L^\frac{nq}{n-\alpha q}(A)}\leq c_1\norm{g}_{W^{\alpha,q}(A)}\leq c_2\norm{g}_{N^{\beta,q}(A)},
\end{align}
and for $\alpha q>n$
\begin{align}
\label{eq:se2}
 \norm{g}_{C^{\alpha-\frac{n}{q}}(A)}\leq c_1\norm{g}_{W^{\alpha,q}(A)}\leq c_2\norm{g}_{N^{\beta,q}(A)}.
\end{align}
 For the above estimates and more detailed study on the given function spaces we refer to~\cite[Chapter 7]{Ada75} and \cite{Tri92}.
 The Nikolskii spaces are very popular in the analysis of PDE, since their definition via difference quotients is rather easy to handle. Namely we introduce for $g\in L^1(\omega)$ and $h\neq 0$
 \[
 D^{s}_{h,e}(g)(x):=\frac{g(x+he)-g(x)}{\abs{h}^{s-1}h}\text{ for any (unit) vector }e\in \Rbb^2.
 \]
In the following, we will omit mentioning the direction $e$ since it is never of relevance and write
$  D^{s}_{h}(q)(y):= D^{s}_{h,e}(q)(y)$ for an arbitrary direction $e$. At this point we just wish to mention that these expressions satisfy the following summation-by-parts formula 
\begin{align*}
\int_\omega  D^{s}_{h,e}(g)(y)q(y)\, dy=-\int_\omega  g(y) D^{s}_{-h,e}(q)(y)\, dy
\end{align*}
for all periodic functions $g\in L^p(\omega)$ and $q\in L^{p'}(\omega)$ with $p\in [1,\infty]$.

For a vector field $\bfg:A\to \Rbb^d$, we say that $\bfg\in W^{\alpha,p}(A)$, if $\bfg^i\in W^{\alpha,p}(A)$ for all $i\in \{1,...,d\}$. Finally we denote by
\[
W^{\alpha,p}_{\diverg}(A)=\{\mathbf{g}\in W^{\alpha,p}_{\diverg}(A)\,:\, \diverg(\bfg)=0\text{ in a distributional sense}\}.
\]

\section{Solenoidal extensions and smooth approximations}\label{sec:Test}
In this section we construct a divergence free extension operator from $(0,T)\times\partial\Omega$ to $(0,T)\times\Omega_{\eta}(t)$. The construction is based on the ideas of the construction in~\cite[Prop. 2.11]{LenRuz}.
In contrast to the approach there we will use the celebrated \Bogovskij{} theorem in place of the steady Stokes operator. We use the following theorem that can be found in~\cite[Section 3.3]{galdi2011introduction}, and in \cite[Appendix 10.5]{FeireislNovotny09}. 
\begin{theorem}\label{BogovskiiTeorem1}
Let $\Omega$ be uniformly Lipschitz.
There exists a linear operator $\bog:\hat{C}^{\infty}_{0}(\Omega)\to C^{\infty}_{0}(\Omega)^d$, {with the property $\divg(Bog(f))=f$}, which extends from $\hat{W}_0^{k-1,p}(\Omega)\to W^{k,p}_0(\Omega)$ for $1<p<\infty$ and $k\in \{0,1,2,...\}$, such that
\begin{align}\label{BogEstimates}
\|\bog(f)\|_{W^{k,p}(\Omega)}\leq C \|f\|_{\hat{W}^{k-1,p}(\Omega)},\quad k\in \Z,
\end{align}
where $C$ is an absolute constant depending only on the Lipschitz constant. 
Here we use the notation $\hat{C}^{\infty}_{0}(\Omega)=\{f\in C^{\infty}_0(\Omega):\int_{\Omega}fdx=0\}$, and for $l\geq 0$, $\hat{W}_0^{l,p}(\Omega)=\{f\in W_0^{l,p}(\Omega):\int_{\Omega}f dx=0\}$,  
$\hat{W}^{-l,p}_0(\Omega)=\{ f\in \mathring{W}^{-l,p}(\Omega)\, :\, \skp{f,1}=0\}$, where $\mathring{W}^{-l,p}(\Omega)$ is defined via the norm
$$
\|f\|_{\mathring{W}^{-l,p}(\Omega)}=\sup_{\{\phi\in W^{l,p'}(\Omega):\|\phi\|_{W^{l,p'}}=1\}}\skp{f,\phi}.
$$
\end{theorem}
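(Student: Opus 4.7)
The plan is to follow the classical construction due to Bogovski\u{\i}, which proceeds by first solving the problem on a star-shaped subdomain via an explicit integral formula and then gluing together local solutions via a partition of unity. First I would cover $\Omega$ by finitely many open balls $B_1,\dots,B_N$ such that each $\Omega_i := \Omega \cap B_i$ is star-shaped with respect to a smaller ball $B_i' \Subset \Omega_i$; such a covering exists whenever $\partial\Omega$ is uniformly Lipschitz, and the number of pieces as well as the star-shapedness parameters are controlled by the Lipschitz constant alone. A subordinate partition of unity $\{\chi_i\}$ splits $f = \sum_i \chi_i f$; the individual pieces are supported in star-shaped subdomains but need not have zero mean. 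One then iteratively redistributes the excess masses through the overlap regions $B_i \cap B_{i+1}$: set $f_1 = \chi_1 f - \psi_1$, $f_2 = \chi_2 f + \psi_1 - \psi_2$, etc., choosing $\psi_i \in C^\infty_0(B_i \cap B_{i+1})$ with prescribed integrals so that each $f_i$ has zero mean on $\Omega_i$. Using $\int_\Omega f\,dx = 0$ one sees the process closes up.

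On each star-shaped piece I would use the explicit Bogovski\u{\i} formula
\[
\bog_i(g)(x) = \int_{\Omega_i} \mathbf{N}_i(x,y)\, g(y)\, dy,
\]
where the kernel $\mathbf{N}_i$ is constructed from a fixed cut-off $\omega_i \in C^\infty_0(B_i')$ with $\int \omega_i = 1$ by integrating $\omega_i$ along rays emanating from $B_i'$ through $x$. A direct computation shows $\diverg \bog_i(g) = g - \omega_i \int g\,dy$ pointwise, which collapses to $g$ when $\int g = 0$, and that $\bog_i(g)$ vanishes on $\partial\Omega_i$ when $g$ is compactly supported in $\Omega_i$. Summing $\bog(f) := \sum_i \bog_i(f_i)$ (extended by zero) yields the desired global, divergence-free extension with the correct boundary values.

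The heart of the proof is the estimate \eqref{BogEstimates}. After one integration by parts in the radial variable, the Bogovski\u{\i} kernel can be rewritten as a singular integral of Calder\'on--Zygmund type plus an everywhere smooth remainder; verifying the standard size, smoothness, and cancellation hypotheses for the principal part gives $L^p$-boundedness for $1<p<\infty$. Higher regularity is obtained by commuting spatial derivatives through the formula: each derivative falling on the kernel produces another Calder\'on--Zygmund kernel of the same order, so $\bog:\hat W^{k-1,p}_0 \to W^{k,p}_0$ for every $k\geq 1$ by induction. The negative-index case $k\leq 0$ is then obtained by a duality argument, using the standard pairing between $W^{k,p}_0$ and $W^{-k,p'}$ and the observation that the adjoint of $\bog$ maps test fields into the zero-mean subspace; the constants in \eqref{BogEstimates} for $k\leq 0$ are dual to those for $k\geq 1$ with conjugate exponent.

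The main obstacle I expect is the Calder\'on--Zygmund estimate itself: verifying the kernel hypotheses uniformly in the star-shapedness data, and tracking how these data (and the cardinality of the covering) depend only on the Lipschitz constant of $\partial\Omega$, so that the final $C$ in \eqref{BogEstimates} depends on nothing else. A secondary technical point is making the mass-redistribution argument in the partition step compatible with the negative-index estimates, which requires choosing the correction functions $\psi_i$ so that they are controlled in $\hat W^{k-1,p}_0$ rather than merely in $L^p$; this is handled by a further application of the local Bogovski\u{\i} operator within each overlap region.
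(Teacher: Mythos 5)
The paper itself gives no proof of Theorem~\ref{BogovskiiTeorem1}: it is imported from the literature, citing \cite[Section 3.3]{galdi2011introduction} for the positive-order scale and \cite[Appendix 10.5]{FeireislNovotny09} (which rests on the negative-order theory of Gei{\ss}ert, Heck and Hieber) for the rest. Your construction for $k\geq 1$ is precisely the classical route taken there and is sound as a sketch: the decomposition of a uniformly Lipschitz domain into finitely many pieces star-shaped with respect to balls, with parameters and cardinality controlled by the Lipschitz character, the mass redistribution through the overlaps, the explicit kernel with $\diverg\bog_i(g)=g-\omega_i\int g\,dy$, and Calder\'on--Zygmund bounds for the first gradient. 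One correction of detail: for $k\geq 2$ the induction does not proceed by letting further derivatives fall on the kernel (a second derivative on the kernel leaves the Calder\'on--Zygmund class and becomes hypersingular); one instead transfers derivatives onto $f$, which is legitimate precisely because $f\in \hat{W}^{k-1,p}_0(\Omega)$ has vanishing traces.

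The genuine gap is the case $k\leq 0$, which is exactly the part the paper needs (it is invoked with $k=0$ in the proof of Proposition~\ref{TestFunction} and with $k=0,-1$ in Lemma~\ref{lem:IbP}), and your duality argument as stated does not close it. To obtain $\|\bog (f)\|_{L^p}\leq C\|f\|_{\mathring{W}^{-1,p}}$ one tests $\bog(f)$ against $\phi\in L^{p'}$ and rewrites $\skp{\bog (f),\phi}=\skp{f,\bog^*\phi}$; this requires the \emph{adjoint} operator $\bog^*$, given by the transposed kernel, to be bounded from $L^{p'}(\Omega)$ into $W^{1,p'}(\Omega)$. Since $\bog$ is not self-adjoint and $\bog^*$ is not simply related to $\bog$, the estimates proved in your first part say nothing about $\bog^*$, so the claim that the constants for $k\leq 0$ are ``dual to those for $k\geq 1$'' is unjustified; proving the mapping properties of the transposed kernel (for instance by treating both operators as pseudodifferential operators of order $-1$, as Gei{\ss}ert, Heck and Hieber do) is the actual content of the negative-order extension. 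Note also that the duality must be taken against $W^{l,p'}(\Omega)$ \emph{without} boundary conditions---this is why the statement is formulated with $\mathring{W}^{-l,p}(\Omega)$---so the adjoint only needs to land in $W^{1,p'}(\Omega)$; your phrase about ``the standard pairing between $W^{k,p}_0$ and $W^{-k,p'}$'' blurs exactly the distinction on which the theorem depends. Your observation that the cut-off and mass-redistribution step must be compatible with the negative norms is correct but is the easy part (multiplication by smooth functions is bounded on $W^{l,p'}(\Omega)$, hence on its dual by transposition); the adjoint-kernel estimate is the step that cannot be skipped.
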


\noindent
Within this section we assume that $\eta:[0,T]\times \omega\to \Rbb$ is such that there exists $\alpha_{\eta}$, $\beta_{\eta}$ such that
\begin{align}
\label{eq:bounds}
\text{  $\alpha(\Omega)+\kappa\leq\alpha_\eta\leq  \eta(t,\by)\leq \beta_\eta\leq \beta(\Omega)-\kappa$ for all $(t,\by)\in [0,T]\times \omega$.}
\end{align} 
Moreover, in this section we use $c$ or $C$ as generic constants which may change their sizes in different instances. Since their  dependence on the geometry is relevant for our arguments, it will always be given explicitly in the statements of the results.
 
The first step is to introduce a solenoidal extension operator. However, since all functions defined on the boundary do not necessary allow for a solenoidal extension, we first need to construct a suitable corrector.
We use the coordinates introduced in Definition~\ref{def:coor}. For ease of readability we define for a function $\xi:\omega \to\R$ 
\[
\tilde{\xi}:\partial\Omega\to \R\text{ with }\tilde{\xi}(\bfp(x))=\tilde{\xi}(x):=\xi(\by(x)).
\]
In our solenoidal extension the \Bogovskij{} theorem will be applied to
\[
 S_\frac{\kappa}{2}\setminus S_\kappa=: \Acal_\kappa.
\]
Observe, that $\Acal_\kappa$ is a $C^2$ domain that contains the support of the function $(\bfp,s)\mapsto {\sigma_\kappa'(\bfp+s\tilde{\bn}(\bfp))}$ from Definition~\ref{def:coor}.

Next we introduce the following weighted mean-value over that set. Let $\lambda\in L^\infty( {\Acal_\kappa}), \lambda \geq 0$, and $\int_{ {\Acal_\kappa}} \lambda(x)\, dx>0$ be a given weight. Then
\[
\mean{\psi}_{\lambda}:=
\frac{\int_{ {\Acal_\kappa}} \psi (x) \lambda(x)\, dx}{\int_{ {\Acal_\kappa}} \lambda(x)\, dx}
\text{ for }\psi\in L^1( {\Acal_\kappa}).
\]
We will denote
\begin{align}
\label{eq:weight}
{{\lambda_{\eta}}}({t},x):=e^{(s(x)-\eta(t,\by(x)))\divg(\bn(\bfp(x)))}\sigma_\kappa'(s(x))\geq 0,
\end{align}
which has compact support in $\Acal_\kappa$ and satisfies (uniformly in $t$)
\[
c_1\leq\norm{{{\lambda_{\eta}}}}_{L^1(\Acal_\kappa)}\leq c_2\norm{{{\lambda_{\eta}}}}_{L^\infty(\Acal_\kappa)}\leq c_3
\]
for some positive constants $c_1\leq c_2\leq c_3$ depending just on $\kappa$ and the upper and lower bounds of $\eta$.
%

\begin{corollary}[Corrector]
\label{Corrector}
\label{cor:cor}
Let \eqref{eq:bounds} be satisfied. Then the corrector map  
\[
\cor:L^1(\omega)\to \Rbb,\, \quad\xi\mapsto\cor(\xi)=\mean{\tilde{\xi}}_{{\lambda_{\eta}}}=\frac{\int_{ {\Acal_\kappa}} \tilde{\xi}(\bfp(x)) {{\lambda_{\eta}}}({t},x)\, dx}{\int_{ {\Acal_\kappa}} {{\lambda_{\eta}}}({t},x)\, dx},
\]
satisfies the following estimates for $q\in [1,\infty]$:
\begin{align}
\|\cor(\xi)\|_{L^{q}(0,T)}&\leq C \|\xi\|_{L^{q}(0,T;L^1(\omega))},
\label{CorBound}
\\
\|\partial_t\cor(\xi)\|_{L^{q}(0,T)}
&\leq C
\Big(\|\partial_t \xi\|_{L^{q}(0,T;L^1(\omega))}
+\| \xi \partial_t\eta\|_{L^{q}(0,T;L^1(\omega))}\Big),
\label{CorTimeDBound}
\end{align}
whenever the right hand side is finite. Here $C$ depends only on $\alpha_\eta, \beta_\eta,$ and $\kappa$.

\end{corollary}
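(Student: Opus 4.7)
The plan is to interpret $\cor(\xi)(t)$ as a weighted average over $\Acal_\kappa$ and reduce both estimates to elementary integral bounds via Fubini in the tubular coordinates $(s,\by)\in[\alpha(\Omega)+\kappa/2,\alpha(\Omega)+\kappa]\times\omega$, using the uniform upper and lower bounds on $\lambda_\eta$ recorded after \eqref{eq:weight} together with the $L^\infty$-bound of the tubular Jacobian $J(s,\by)$ coming from $\partial\Omega\in C^3$.

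For \eqref{CorBound}, since $\tilde\xi(x)=\xi(\by(x))$ depends only on the surface coordinate, Fubini yields
\[
\int_{\Acal_\kappa}|\tilde\xi|\,\lambda_\eta\,dx \;=\; \int_\omega|\xi(\by)|\,\Big(\int_{\alpha(\Omega)+\kappa/2}^{\alpha(\Omega)+\kappa}\lambda_\eta(t,s,\by)\,J(s,\by)\,ds\Big)\,d\by,
\]
with the inner $s$-integral uniformly bounded in $(t,\by)$. Dividing by $\int_{\Acal_\kappa}\lambda_\eta\,dx$, which is bounded below uniformly in $t$, gives $|\cor(\xi)(t)|\leq C\|\xi(t)\|_{L^1(\omega)}$; taking the $L^q(0,T)$-norm then yields \eqref{CorBound}.

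For \eqref{CorTimeDBound}, I would differentiate via the quotient rule, using that $\partial_t\lambda_\eta=-\partial_t\eta(t,\by(x))\,\divg(\bn(\bfp(x)))\,\lambda_\eta$. Adding and subtracting $\cor(\xi)\,\mean{\partial_t\tilde\eta\,\divg(\tilde\bn)}_{\lambda_\eta}$ leads to the clean identity
\[
\partial_t\cor(\xi) \;=\; \cor(\partial_t\xi) \;-\; \mean{(\tilde\xi-\cor(\xi))\,\partial_t\tilde\eta\,\divg(\tilde\bn)}_{\lambda_\eta}.
\]
The first summand is controlled by \eqref{CorBound} applied to $\partial_t\xi$.

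The main obstacle is the second summand: a na\"ive split of $\tilde\xi-\cor(\xi)$ produces a cross term of size $|\cor(\xi)|\cdot\|\partial_t\eta\|_{L^1(\omega)}$ which is not dominated by $\|\xi\partial_t\eta\|_{L^1(\omega)}$. The remedy is to keep the product $(\tilde\xi-\cor(\xi))\partial_t\tilde\eta$ together, factor out $\divg\bn\in L^\infty(\partial\Omega)$, and perform the same tubular Fubini as in step one; the $s$-integration produces a common uniformly bounded kernel $K(t,\by)$, so the weighted mean reduces to a constant multiple of $\int_\omega(\xi-\cor(\xi))(\by)\,\partial_t\eta(t,\by)\,K(t,\by)\,d\by$. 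The $\tilde\xi$-piece of this integral is immediately bounded by $C\|\xi\partial_t\eta\|_{L^1(\omega)}$, while the remaining $\cor(\xi)$-piece is absorbed by re-grouping it, via the same kernel representation, against the $\tilde\xi\,\partial_t\tilde\eta$-integral already handled—so that the bounded kernel $K$ multiplies the same product $\xi\partial_t\eta$ up to a sign. Taking the $L^q(0,T)$-norm then yields \eqref{CorTimeDBound}.
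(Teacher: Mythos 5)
Your derivation of \eqref{CorBound} and the identity
\[
\partial_t\cor(\xi)=\cor(\partial_t\xi)-\mean{(\tilde{\xi}-\cor(\xi))\,\partial_t\tilde{\eta}\,\divg\tilde{\bn}}_{\lambda_\eta}
\]
are correct and coincide, after regrouping, with the paper's own computation (quotient rule, $\partial_t\lambda_\eta=-\partial_t\eta\,\divg(\bn(\bfp))\,\lambda_\eta$, and the uniform upper and lower bounds on $\lambda_\eta$). The gap is your final step, where you claim the piece $\cor(\xi)\,\mean{\partial_t\tilde{\eta}\,\divg\tilde{\bn}}_{\lambda_\eta}$ can be ``absorbed'', via the kernel representation, into the $\xi\,\partial_t\eta$ term. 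This cannot work: that piece is a product of two \emph{separate} averages over $\omega$, not an integral of the pointwise product, and it is genuinely not dominated by $\norm{\xi\,\partial_t\eta}_{L^1(\omega)}$. Concretely, take $\xi=\mathbf{1}_A$ independent of time and $\eta(t,\cdot)=\eta_0+t\chi$ with $\chi$ supported in $\omega\setminus A$, for a reference geometry with $\divg\bn$ of one sign (cylinder, sphere): then $\partial_t\xi\equiv0$ and $\xi\,\partial_t\eta\equiv0$, while the numerator of $\cor(\xi)$ is constant in time and the denominator $\norm{\lambda_\eta}_{L^1(\Acal_\kappa)}$ is not, so $\partial_t\cor(\xi)\neq0$. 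Hence no regrouping ``up to a sign'' can produce the claimed cancellation; that sentence asserts an identity that does not exist.

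For comparison, the paper's proof does not attempt any such absorption: it differentiates the quotient and bounds all three resulting terms by the uniform bounds on $\lambda_\eta$ from \eqref{eq:weight}, which for the cross term yields $C\norm{\xi}_{L^1(\omega)}\norm{\partial_t\eta}_{L^1(\omega)}$; this is also the form in which the corrector bound is actually invoked later (cf.\ the term $\abs{\partial_t\eta}\,\norm{\xi}_{L^1}$ kept explicitly in \eqref{eq:pointwise2}, and the H\"older-type estimates in Section~4, which only ever need products of norms). So the honest conclusion of your (and the paper's) computation is
\[
\abs{\partial_t\cor(\xi)(t)}\leq C\Big(\norm{\partial_t\xi(t)}_{L^1(\omega)}+\norm{\xi(t)\,\partial_t\eta(t)}_{L^1(\omega)}+\norm{\xi(t)}_{L^1(\omega)}\norm{\partial_t\eta(t)}_{L^1(\omega)}\Big),
\]
and you should record \eqref{CorTimeDBound} with this additional cross term (or with products of norms) rather than trying to force it under $\norm{\xi\,\partial_t\eta}_{L^1}$; the weaker form suffices for every subsequent application in the paper.
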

\begin{proof}
The estimates in $L^q(0,T)$ are immediate by the uniform bounds of $\lambda_\eta$ and $\sigma$. In order to estimate the time-derivative, we use the calculation 
\begin{align*}
\partial_t\mean{\xi(t)}_{{{\lambda_{\eta}}}(t)}&=-\frac{1}{\norm{{{\lambda_{\eta}}}(t)}_{L^1}^2}\int_{ \Acal_\kappa}\partial_t{{\lambda_{\eta}}}(t)\, dx\int_{ \Acal_\kappa}\tilde{\xi}(t){{\lambda_{\eta}}}(t)\, dx+\frac{1}{\norm{{{\lambda_{\eta}}}(t)}_{L^1}}\int_{ \Acal_\kappa}\partial_t\tilde{\xi}(t){{\lambda_{\eta}}}(t)\, dx 
\\
&\quad + \frac{1}{\norm{{{\lambda_{\eta}}}(t)}_{L^1}}\int_{ \Acal_\kappa}\tilde{\xi}(t)\partial_t{{\lambda_{\eta}}}(t)\, dx.
\end{align*}
The estimate now follows using $\partial_t {{\lambda_{\eta}}} = -\partial_t \eta {{\lambda_{\eta}}}$ and by the uniform bounds of $\lambda_\eta$ and $\sigma$.

\end{proof}

\begin{proposition}[Solenoidal extension]
\label{TestFunction}
Let \eqref{eq:bounds} be satisfied and $\eta\in L^\infty(0,T;W^{1,2}(\omega))$. Then there exists a linear solenoidal extension operator 
\[
\test:\{\xi\in L^1(0,T;W^{1,1}(\omega)):\cor(\xi)\equiv0\}\to L^{1}(0,T;W^{1,1}( Q^\frac{\kappa}2)),
\]
 such that $\diverg{\test}(\xi-\cor(\xi))=0$ for all $\xi\in L^1(0,T;W^{1,1}(\omega))$ and 
$(\test(\xi-\cor(\xi)),\xi-\cor(\xi)
)\in \mathcal{V}_T$ for $\xi \in \Vcal_{K}$. 

Moreover, $\test(\xi-\cor(\xi))(t,x)=0$ for $(t,x)\in (0,T)\times Q_\frac{\kappa}2$ and it 
satisfies the following estimates for $q\in [1,\infty]$, $p\in (1,\infty)$ and $l\in \Nbb$.
\begin{align}
\|\test(\xi-\cor(\xi))\|_{L^{q}(0,T;L^p( Q^\frac{\kappa}2))}&\leq C \|\xi\|_{L^{q}(0,T;L^p(\omega))},
\\
\|\test(\xi-\cor(\xi))\|_{L^{q}(0,T;W^{1,p}( Q^\frac{\kappa}2))}&\leq C\Big( \|\xi\|_{L^{q}(0,T;W^{1,p}(\omega)}+\|\xi \nabla \eta \|_{L^{q}(0,T;L^{p}(\omega)}\Big),
\label{TestNablaBound}
\\
\|\partial_t\test(\xi-\cor(\xi))\|_{L^{q}(0,T;L^p(Q^\frac{\kappa}2))}
&\leq C
\Big(\|\partial_t \xi\|_{L^{q}(0,T;L^p(\omega)}
+\| \xi \partial_t\eta\|_{L^{q}(0,T;L^p(\omega))}\Big),
\label{TestTimeDBound}
\\
\nonumber
\|\nabla^2\test(\xi-\cor(\xi))\|_{L^{q}(0,T;L^p(Q^\frac{\kappa}2))}
&\leq C\Big(\|\nabla^2 \xi\|_{L^{q}(0,T;L^p(\omega)}+\| \xi \nabla^2 \eta\|_{L^{q}(0,T;L^p(\omega))}\Big)
\\
\nonumber
&\quad +C\Big(\| \abs{\nabla \xi}\abs{\nabla \eta})\|_{L^{q}(0,T;L^p(\omega))}
+\| \xi \abs{\nabla \eta}^2\|_{L^{q}(0,T;L^p(\omega))}\Big),
\\
\nonumber
\|\partial_t\nabla\test(\xi-\cor(\xi))\|_{L^{q}(0,T;L^p(Q^\frac{\kappa}2))}
&\leq C\Big(\|\partial_t\nabla\xi\|_{L^{q}(0,T;L^p(\omega)}+\| \xi \partial_t \nabla \eta\|_{L^{q}(0,T;L^p(\omega))}
\Big)
\\
\nonumber
&\quad +C\| \abs{\partial_t\xi}\abs{ \nabla \eta}+\abs{\nabla \xi}\abs{\partial_t \eta} +\abs{\xi\partial_t\eta} \abs{\nabla \eta}\|_{L^{q}(0,T;L^p(\omega))},
\\
\label{eq:normal}
\|\partial^l_\bn\test(\xi-\cor(\xi))\|_{L^{q}(0,T;W^{1,p}(Q^\frac{\kappa}2))}
&\leq C\|\test(\xi)\|_{L^{q}(0,T;W^{1,p}(Q^\frac{\kappa}2))},
\end{align}
whenever the right hand side is finite. Here $C$ depends only on $\alpha_\eta, \beta_\eta,$ and $\kappa$.
\end{proposition}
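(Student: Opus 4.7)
The plan is to build $\test(\xi)$ in two stages: a ``naive'' radial extension $\bfg(\xi)$ built directly in the tubular coordinates of Definition~\ref{def:coor}, followed by a divergence correction supplied by the \Bogovskij{} operator on the annular domain $\Acal_\kappa$. For the first stage I would set
\[
 \bfg(\xi)(t,x) := \sigma_\kappa(s(x))\,\xi(\by(x))\,\bn(\by(x))\,e^{(s(x)-\eta(t,\by(x)))\,\divg\bn(\bfp(x))}
\]
for $x\in Q^{\kappa/2}$. The cutoff $\sigma_\kappa$ localizes $\bfg(\xi)$ inside $S_{\kappa/2}$ so that $\bfg(\xi)\equiv 0$ on $Q_{\kappa/2}$. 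Since $\by$ and $\bfp$ are constant along normal lines, in the $(s,y)$-chart the only $s$-dependence of $\bfg$ sits in $\sigma_\kappa$ and in the exponential; a short computation using the expansion $J(s,y)=|\ba_1\times\ba_2|(y)+s\,(\bn\!\cdot\!(\ba_1\!\times\!\partial_2\bn+\partial_1\bn\!\times\!\ba_2))+s^2\,\bn\!\cdot\!(\partial_1\bn\!\times\!\partial_2\bn)$ shows that the exponential factor is chosen precisely so that $\divg\bfg(\xi)$ reduces, up to smooth bounded factors, to $\lambda_\eta(t,x)\,\xi(\by(x))$; in particular it is supported in $\Acal_\kappa$. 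Evaluating $\bfg$ at $s=\eta$, where $\sigma_\kappa\equiv 1$ and the exponential equals $1$, produces the prescribed trace $\xi\,\bn(\eta)$ after accounting for the Jacobian built into $\bn(\eta)=\partial_1\bvphi_\eta\times\partial_2\bvphi_\eta$, which is exactly what the space $\Vcal_T$ requires.

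For the second stage I would apply Theorem~\ref{BogovskiiTeorem1} on the fixed Lipschitz (in fact $C^2$) domain $\Acal_\kappa$. By construction and the definition of $\cor$ one has
\[
 \int_{Q^{\kappa/2}}\divg\bfg(\xi)\,dx \;=\; \Big(\int_{\Acal_\kappa}\lambda_\eta\,dx\Big)\,\cor(\xi),
\]
so the hypothesis $\cor(\xi)\equiv 0$ supplies the zero-mean condition required to apply $\bog$. Setting $\bfh(\xi):=\bog(-\divg\bfg(\xi))$, extended by zero outside $\Acal_\kappa$, and $\test(\xi):=\bfg(\xi)+\bfh(\xi)$, one obtains a solenoidal vector field on $Q^{\kappa/2}$, vanishing on $Q_{\kappa/2}$, whose trace on $\Gamma_\eta$ matches $\xi\,\bn(\eta)$, so that $(\test(\xi),\xi)\in\Vcal_T$.

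The estimates then follow by differentiating the explicit formula for $\bfg(\xi)$ and applying \eqref{BogEstimates} to $\bfh(\xi)$. Each time-derivative of the exponential factor brings down $\partial_t\eta\,\divg\bn$; each spatial derivative brings down one of $\sigma_\kappa'$, $\divg\bn$, or $\nabla\eta$. The product rule then yields exactly the combinations of $\xi$- and $\eta$-derivatives stated on the right-hand sides. Since $\bog$ is linear and bounded from $\hat W^{k-1,p}_0(\Acal_\kappa)$ to $W^{k,p}_0(\Acal_\kappa)$ with constants depending only on the fixed geometry of $\Acal_\kappa$, the same bounds transfer to $\bfh(\xi)$ after expanding $\nabla\divg\bfg(\xi)$ and $\partial_t\divg\bfg(\xi)$ pointwise. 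The normal-derivative estimate \eqref{eq:normal} exploits that $\partial_\bn$ hits only $\sigma_\kappa(s)$ and the exponential factor, so it reduces to the first-order spatial estimate already obtained for $\test(\xi)$ itself.

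The main obstacle will be the second-order mixed estimates, notably $\partial_t\nabla\test(\xi-\cor(\xi))$ and $\nabla^2\test(\xi-\cor(\xi))$, where $\divg\bfg(\xi)$ depends on $t$ and on tangential derivatives of $\eta$ in a genuinely nonlinear way. The key conceptual point is that $\partial_t$ and tangential spatial derivatives commute with $\bog$ on the fixed domain $\Acal_\kappa$, so these estimates reduce to careful product-rule bookkeeping. Tracking the exact cross-terms $|\partial_t\xi||\nabla\eta|$, $|\nabla\xi||\partial_t\eta|$, $|\xi\partial_t\eta||\nabla\eta|$, and $|\xi||\nabla\eta|^2$ requires unwinding $J(s,y)$ to second order, which is where the $C^3$ regularity of $\partial\Omega$ enters.
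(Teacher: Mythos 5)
Your proposal is correct and follows essentially the same route as the paper: the identical preliminary normal extension $\sigma_\kappa(s)\,e^{(s-\eta)\divg\bn(\bfp)}\tilde\xi\,\bn(\bfp)$, the same observation that its divergence is supported in $\Acal_\kappa$ and equals (up to sign) $\lambda_\eta\tilde\xi$ so that $\cor(\xi)=0$ gives the zero-mean condition, the same \Bogovskij{} correction on the fixed domain $\Acal_\kappa$, and the same strategy of differentiating the explicit formula and transferring bounds through Theorem~\ref{BogovskiiTeorem1}. The only deviations are cosmetic: you estimate the \Bogovskij{} part with positive-order bounds applied directly to $\divg\bfg$ (harmless, since $\divg\bfg$ contains no derivatives of $\xi$ or $\eta$), whereas the paper works in negative norms, e.g.\ $\|\bog_\kappa(\partial_t\divg\overline{\test}(\xi))\|_{L^p}\le c\|\partial_t\overline{\test}(\xi)\|_{L^p}$; and the boundary value produced is in fact $\xi\,\bn$ (the reference unit normal, matching \eqref{kinematic} and the paper's own proof) rather than ``$\xi\,\bn(\eta)$ after Jacobian accounting'', which does not affect correctness.
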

\begin{proof}

{\bf Construction:}

\noindent
The construction relies exclusively on the reference geometry, namely on $S_\kappa$ defined in Definition~\ref{def:coor}. Hence to keep the notation compact we will omit the dependence on the time variable $t$.
Moreover, without loss of generality we assume that
$\cor(\xi)=0$, since otherwise we replace $\xi$ by $\xi-\cor(\xi)$, for which we have 
\[
\cor(\xi-\cor(\xi))
=\frac{\int_{\Acal_\kappa}(\tilde{\xi}-{\cor}(\xi)){{\lambda_{\eta}}}\, dx}{\int_{\Acal_\kappa}{{\lambda_{\eta}}}\, dx}
=\frac{\int_{\Acal_\kappa}\tilde{\xi}{{\lambda_{\eta}}}\, dx}{\int_{\Acal_\kappa}{{\lambda_{\eta}}}\, dx}-{\cor}(\xi)
=
0.
\]
Hence, once the estimates are valid for $\xi$, such that $\cor(\xi)=0$ the estimates follow by Corollary~\ref{cor:cor} also for the case $\cor(\xi)\neq 0$.

First observe, that for the coordinates $s(x),\bfp(x)$ introduced in Definition~\ref{def:coor} we find
\[
\nabla s(x)=\partial_{\bn} s(x)\bn\text{ and }\nabla \bfp(x)=(\partial_{\bftau_i(\bfp(x))}\bfp(x)
)_{i=1,...,d-1}
\]
and (independent of $s(x)$)
\[
\divg(\bn(\bfp))=\sum_{i=1}^{d-1}\partial_{\bftau_i(\bfp)}\bn(\bfp)\cdot \bftau_i(\bfp).
\]
 For $y\in \omega$ and $x\in S_\kappa$ we find by the assumption on $\Omega$, that $y=y(x)$ if and only if $\bfp(x)=\bvphi(y)$ and so (wherever well defined)
\[
\partial_{\bn(\bfp)}\by(x)=0\text{ and so }\partial_{\bn(\bfp)} \xi(\by(x))\equiv 0.
\]
Next we introduce the operator:
\[
\overline{\test}(\xi)(x):=e^{(s(x)-\eta(\by(x)))\divg(\bn(\bfp(x)))}\tilde{\xi}(\bfp(x))\sigma_\kappa(s(x))\bn(\bfp(x)).
\]
Observe, that for $x\in \Omega_\eta\cap S_\kappa$, we find
\[
\overline{\test}(\xi)(x)=e^{(s(x)-\eta(\by(x)))\divg(\bn(\bfp(x)))}\tilde{\xi}(\bfp(x))\bn(\bfp(x)).
\]
In particular, for $x\in \partial\Omega_\eta$, we find $s(x)=\eta(\by(x))$ and hence
\[
\overline{\test}(\xi)(x)=\bn(\bfp(x))\xi(\by(x)),\;{x\in\partial\Omega_{\eta}}.
\] 
Using that {$\partial_{\bn(\bfp(x))} f(x)= -\partial_s f(\bfp,s)$} we find for $x\in Q^\frac{\kappa}{2}\cap S_{\kappa}$
\begin{align*}
\divg(\overline{\test}(\xi)(x))&=\nabla \Big((e^{(s(x)-\eta(\by(x)))\divg(\bn(\bfp(x)))}\tilde{\xi}(\bfp(x))\Big)\cdot\bn(\bfp(x))
\\
&\quad  +e^{(s(x)-\eta(t,\by(x)))\divg(\bn(\bfp(x)))}\tilde{\xi}(\bfp(x))\divg(\bn(\bfp(x)))
\\
&=-\partial_{s}\Big((e^{(s(x)-\eta(\by(x)))\divg(\bn(\bfp(x)))}\tilde{\xi}(\bfp(x))\Big)
\\
&\quad  +e^{(s(x)-\eta(\by(x)))\divg(\bn(\bfp(x)))}\tilde{\xi}(\bfp(x))\divg(\bn(\bfp(x)))
\\
&= 0.
\end{align*}
On $ \Acal_\kappa$ we find (by the same calculations) that
\[
\divg(\overline{\test}(\xi)(x)) = -e^{(s(x)-\eta(t,\by(x)))\divg(\bn(\bfp(x)))}\tilde{\xi}(\bfp(x))\sigma'_\kappa(s(x)),
\]
which has compact support in $ \Acal_\kappa $. Moreover, 
\[
\int_{ \Acal_\kappa} \divg(\overline{\test}(\xi)(x))\, dx
= -\int_{ \Acal_\kappa} {{\lambda_{\eta}}}(x) \tilde{\xi}(\bfp(x))\, dx 
=0.
\] 
Since $ \Acal_\kappa$ is by assumption a $C^2$ domain we can apply the \Bogovskij{} operator on this domain which we denote by $\bog_\kappa$. We define
\[
\test(\xi)(x):=\overline{\test}(\xi)(x)-\bog_\kappa(\divg(\overline{\test}(\xi)))(x).
\]

{\bf Estimates:}

\noindent
The estimates are quite standard relaying on the regularity of $\bvphi$, namely on the $C^2$-regularity of $\partial \Omega$. We give some details on the estimate in order show a clear dependence on $\eta$.

We start with the estimates of the time derivative of $\overline{\test}(\xi)$. We calculate
\[
\partial_t \test(\xi)=\partial_t\overline{\test}(\xi)-\bog_\kappa(\partial_t\divg(\overline{\test}(\xi))).
\]
The \Bogovskij{} operator is well defined due to the fact, that (formally)
\[
\int_{\Omega_{\eta(t)}}\partial_t\divg(\overline{\test}(\xi))\, dx= \int_{\Acal_\kappa}\partial_t\divg(\overline{\test}(\xi))\, dx =\partial_t\bigg(\int_{\Acal_\kappa}\divg(\overline{\test}(\xi))\, dx\bigg)=0. 
\]
We calculate further 
\begin{align*}
\partial_t\overline{\test}(\xi)(t, x) &= e^{(s(x)-\eta(t,\by(x)))\divg(\bn(\bfp(x)))}\partial_t\xi(t,\by(x))\bn(\bfp(x))
\\
&\quad  -\divg(\bn(\bfp(x)))\partial_t\eta(t,\by(x))  e^{(s(x)-\eta(t,\by(x)))\divg(\bn(\bfp(x)))}\xi(t,\by(x))\bn(\bfp(x)),
\end{align*}
which implies the pointwise estimates for $\partial_t\overline{\test}(\xi)$:
\begin{align}
\label{eq:pointwise}
\abs{\partial_t\overline{\test}(\xi)(t, x)}\leq c(\abs{\partial_t\xi(t,\bfp(x))}+\abs{\partial_t\eta(t,\bfp(x))}\abs{\xi(t,\bfp(x))}),
\end{align}
where the constant only depends on $\kappa,\alpha_\eta,\beta_\eta$ and the $C^2$-regularity of $\partial\Omega$.
 For the sake of better understanding we demonstrate that the assumption $\cor(\xi)=0$ is indeed without loss of generality. We estimate
\begin{align}
\label{eq:pointwise2}
\begin{aligned}
&\abs{\partial_t\overline{\test}(\xi-\cor(\xi))(t, x)}
\\
&\leq c\big(\abs{\partial_t\xi(t,\bfp(x))}+\abs{\partial_t\eta(t,\bfp(x))}(\abs{\xi(t,\bfp(x))}+\norm{\xi(t)}_{L^1})+\norm{\partial_t\eta(t) \xi(t)}_{L^1}\big).
\end{aligned}
\end{align}
 In order to estimate the \Bogovskij{} part we 
find by Theorem~\ref{BogovskiiTeorem1} (with a constant just depending on the Lipschitz constant of $\Acal_\kappa$) that
\begin{align*}
\norm{\bog_\kappa(\partial_t\divg(\overline{\test}(\xi-\cor(\xi))))}_{L^p(\Omega_\eta)}
&=\norm{\bog_\kappa(\partial_t\divg(\overline{\test}(\xi-\cor(\xi))))}_{L^p(\Acal_\kappa)}
\\
&\leq c\norm{\divg(\partial_t\overline{\test}(\xi-\cor(\xi))))}_{\hat{W}^{-1,p}(\Acal_\kappa)}
\\
&=c\norm{\partial_t\overline{\test}(\xi-\cor(\xi))}_{L^p(\Acal_\kappa)}.
\end{align*}
and so the estimate on $\partial_t{\test}(\xi-\cor(\xi))$ follows by \eqref{eq:pointwise2}.

The estimates on $\nabla{\test}(\xi-\cor{\xi}),\nabla^2{\test}(\xi-\cor{\xi})$ and $\partial_t\nabla{\test}(\xi-\cor{\xi})$ are analogous and we skip the details here. Observe that due to the compact support of $\divg(\overline{\test}(\xi-\cor(\xi))))$, by Gauss theorem  we find that
\[
\int_{\Omega_{\eta(t)}}\nabla^l\divg(\overline{\test}(\xi-\cor{\xi}))\, dx=0=\int_{\Omega_{\eta(t)}}\partial_t\nabla^l\divg(\overline{\test}(\xi-\cor{\xi}))\, dx; 
\]
hence $\bog_\kappa$ is always well defined.

Clearly the normal derivatives of the constructed function $\overline{\test}(\xi-\cor(\xi)))$ depend on the estimates of the derivatives of $\sigma_\kappa$ and not on the regularity of the derivatives of $\eta$. Since the \Bogovskij{} theorem transfers the regularity to $\test(\xi-\cor(\xi))$ with no further loss, \eqref{eq:normal} follows with according dependences on the higher order derivatives of $\sigma_\kappa$.

\end{proof}
We include the following corollary that will be necessary for our compactness result (See Section~\ref{sec:auba}).
\begin{corollary}[Smooth Solenoidal Extension]
\label{cor:extend}
Let $a,\;r\in[2,\infty]$, $p,\;q\in (1,\infty)$ and $s\in [0,1]$. Assume that $\eta\in L^r([0,T];W^{2,a}(\omega))\cap W^{1,r}([0,T];L^a(\omega))$, such that $\alpha(\Omega)+\kappa\leq\alpha_\eta\leq  \eta\leq \beta_\eta\leq \beta(\Omega)-\kappa$. 

Let $b\in W^{s,p}(\omega))$ and take $(b)_\delta$ as a smooth approximation of $b$ in $\omega$. Then $E_{\eta,\delta}(b):=\test((b)_\delta-\cor((b)_\delta))$ satisfies all the regularity of Proposition~\ref{TestFunction}. In particular
\[
\norm{E_{\eta(t),\delta}(b)-\test(b-\cor(b))}_{L^p(Q^\frac{\kappa}{2})}\leq c\norm{(b)_\delta-b}_{L^p(\omega)}
\]
and
\[
\norm{\partial_t E_{\eta(t),\delta}(b)}_{L^r(0,T;L^a(Q^\frac{\kappa}{2})}\leq c\norm{(b)_\delta \partial_t\eta(t)}_{L^r(0,T;L^a(\omega)}
\]
uniformly in $t\in (0,T)$.
\end{corollary}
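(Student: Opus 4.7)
The key observation is that both $\test$ and $\cor$ are linear operators, so the composition $\xi\mapsto \test(\xi - \cor(\xi))$ is linear. Therefore I would first rewrite
\[
E_{\eta,\delta}(b) - \test(b - \cor(b)) = \test\bigl(((b)_\delta - b) - \cor((b)_\delta - b)\bigr),
\]
which reduces the first claimed estimate to applying the $L^p$ bound from Proposition~\ref{TestFunction} (with $q=\infty$) to the time-independent input $(b)_\delta - b$. The bound $\|\test(\xi - \cor(\xi))\|_{L^p(Q^{\kappa/2})} \leq C \|\xi\|_{L^p(\omega)}$ then yields the desired estimate pointwise in $t$, with constant depending only on $\kappa, \alpha_\eta, \beta_\eta$.

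For the time-derivative estimate, the crucial point is that $(b)_\delta$ does not depend on $t$, so $\partial_t (b)_\delta \equiv 0$. All time dependence of $E_{\eta,\delta}(b)$ enters through the dependence of $\cor$ and $\test$ on $\eta(t)$. I would then invoke estimate \eqref{TestTimeDBound} from Proposition~\ref{TestFunction} with $\xi = (b)_\delta$, which immediately produces
\[
\|\partial_t E_{\eta,\delta}(b)\|_{L^r(0,T;L^a(Q^{\kappa/2}))} \leq C\bigl(\|\partial_t (b)_\delta\|_{L^r(0,T;L^a(\omega))} + \|(b)_\delta \partial_t\eta\|_{L^r(0,T;L^a(\omega))}\bigr),
\]
and the first term vanishes. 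The assumption $\eta\in W^{1,r}(0,T;L^a(\omega))$ ensures the right-hand side is finite and the constant depends only on the geometric data.

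Finally, to obtain the statement that $E_{\eta,\delta}(b)$ satisfies all regularity asserted in Proposition~\ref{TestFunction}, I would simply observe that $(b)_\delta$ is smooth in $\omega$, so every norm $\|(b)_\delta\|_{W^{k,p}(\omega)}$ and every mixed product $\||\nabla^j (b)_\delta|\,|\nabla^l \eta|\|_{L^q_t L^p_x}$ appearing on the right-hand sides of the estimates in Proposition~\ref{TestFunction} is finite under the stated assumptions on $\eta$ (using H\"older's inequality together with the regularity $\eta\in L^r(0,T;W^{2,a}(\omega))\cap W^{1,r}(0,T;L^a(\omega))$). There is no obstacle here beyond bookkeeping; the corollary is essentially a direct consequence of the linearity of the extension operator and the stability bounds already established in Proposition~\ref{TestFunction} and Corollary~\ref{cor:cor}. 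No new geometric argument or commutator estimate is required.
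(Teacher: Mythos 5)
Your argument is correct and is exactly the intended one: the paper states Corollary~\ref{cor:extend} without any proof, treating it as an immediate consequence of Proposition~\ref{TestFunction} (and Corollary~\ref{cor:cor}), and your use of the linearity of $\xi\mapsto\test(\xi-\cor(\xi))$ together with the $L^p$ stability bound for the first estimate and \eqref{TestTimeDBound} with $\partial_t(b)_\delta\equiv 0$ for the second is precisely that consequence. No genuinely different route is taken, so there is nothing further to compare.
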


We include the following technical lemma, that will be necessary for the regularity result.
\begin{lemma}
\label{lem:IbP}
Let $p,\;\tilde{a}\in (1,\infty)$ such that $p'< \tilde{a}\leq \frac{dp'}{d-p'}$ if $p'<d$, and $p'< \tilde{a}<\infty$ otherwise, and let the assumptions of Proposition~\ref{TestFunction} be satisfied. Assume additionally that $\eta\in C^{0,\theta}(\omega)\cap W^{1,\frac{\tilde{a}p}{\tilde{a}p-\tilde{a}-p}}(\omega)$ and $\bu\in W^{1,p'}(\Omega_\eta)$ then the above constructed test function satisfies 
\begin{align}
\label{eq:partial1}
&\absBB{\int_{\Omega_\eta} \bu\cdot \test (D^s_{h,e}\xi-\cor(D^s_{h,e}\xi))\, dx}
\leq c(h^{\theta-s}+ \norm{D^s_{h,e}\eta}_{W^{1,\frac{\tilde{a}p}{\tilde{a}p-\tilde{a}-p}}(\omega)})\norm{\bu}_{W^{1,p'}(\Omega_{\eta})}\norm{\xi}_{L^p(\omega)} 
\intertext{and in case $\partial_t \xi\in L^p(\omega)$}
\label{eq:partial2}
&\absBB{\int_{\Omega_\eta} \bu\cdot \partial_t\test (D^s_{h,e}\xi-\cor(D^s_{h,e}\xi))\, dx}
\\
&\quad \leq c\Big(\big(h^{\theta-s}+\norm{D^s_{h,e}\eta}_{W^{1,\frac{\tilde{a}p}{\tilde{a}p-\tilde{a}-p}}(\omega)}\big)\norm{\partial_t\xi}_{L^p(\omega)}\norm{\bu}_{W^{1,p'}(\Omega_{\eta})}
 +\norm{D^s_{h,e}\xi\partial_t\eta}_{L^{\tilde{a}'}(\omega)}\norm{\bu}_{W^{1,p'}(\Omega_{\eta})}\Big).
\end{align}
 The constants are only depending on $\alpha_\eta,\beta_\eta$, $\kappa$ and (linearly) on $\norm{\eta}_{C^{1,\theta}(\omega)}$.
\end{lemma}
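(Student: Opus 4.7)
My plan: I will use the summation-by-parts identity for finite differences on the torus,
$\int_\omega D^s_{h,e}\xi \cdot q \, dy = -\int_\omega \xi \cdot D^s_{-h,e}q \, dy$,
to transfer the difference quotient off $\xi$ and onto the coefficients of the test function. Since $\test$ depends linearly on the boundary datum but nonlinearly on $\eta$, the resulting commutator produces finite differences of $\eta$ (to be absorbed using its H\"older regularity and the $W^{1,\cdot}$ bound on $D^s_{h,e}\eta$) and finite differences of $\bu\circ\bfPhi$ (absorbed by the $W^{1,p'}$ bound on $\bu$). By Corollary~\ref{cor:cor}, the scalar $\cor(D^s_{h,e}\xi)$ is controlled by $\|D^s_{h,e}\xi\|_{L^1(\omega)}$ and, pairing against smooth factors, can be treated by an additional summation by parts; so I reduce to the case $\cor(D^s_{h,e}\xi)\equiv 0$ without loss of generality.

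Next I exploit the explicit form of $\overline{\test}(\psi)$ in the $(s,y)$-coordinates $x=\bfPhi(s,y)$. Since $\overline{\test}(\psi)$ is supported in $S_{\kappa/2}\cap\Omega_\eta$, I write
\[
\int_{\Omega_\eta}\bu\cdot\overline{\test}(D^s_{h,e}\xi)\,dx =\int_\omega D^s_{h,e}\xi(y)\,F(y)\,dy,\qquad F(y)=\int_{\alpha+\kappa}^{\eta(y)}\bu(\bfPhi(s,y))\cdot K(s,y)\,ds,
\]
where $K(s,y)$ is the smooth kernel assembled from $\bn(y)$, $\sigma_\kappa(s)$, the Jacobian of $\bfPhi$ and the exponential $e^{(s-\eta(y))\divg\bn(y)}$. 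Summation by parts in $y$ yields $-\int_\omega\xi\,D^s_{-h,e}F\,dy$, so the core estimate is $\|D^s_{-h,e}F\|_{L^{p'}(\omega)}\lesssim (h^{\theta-s}+\|D^s_{h,e}\eta\|_{W^{1,\tilde ap/(\tilde ap-\tilde a-p)}(\omega)})\|\bu\|_{W^{1,p'}(\Omega_\eta)}$. To obtain it I would split $F(y-he)-F(y)$ into three pieces: (i) $\int_{\alpha+\kappa}^{\eta(y)}[\bu(\bfPhi(s,y-he))-\bu(\bfPhi(s,y))]\cdot K(s,y-he)\,ds$, bounded by $h\|\nabla\bu\|_{L^{p'}(\Omega_\eta)}$ via the standard difference-quotient estimate in the $(s,y)$-coordinates; (ii) $\int \bu(\bfPhi(s,y))[K(s,y-he)-K(s,y)]\,ds$, whose exponential factor produces $\eta(y-he)-\eta(y)$ of size $\|\eta\|_{C^{0,\theta}}h^\theta$ and whose smooth ingredients contribute $O(h)$; (iii) the endpoint integral $\int_{\eta(y)}^{\eta(y-he)}\bu(\bfPhi(s,y-he))\cdot K(s,y-he)\,ds$ on the thin tube whose thickness is $\leq\|\eta\|_{C^{0,\theta}}h^\theta$. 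Dividing by $|h|^s$ and using $h^{1-s}\leq h^{\theta-s}$ for $\theta\leq 1$ combines the three contributions into the stated bound. The $\|D^s_{h,e}\eta\|_{W^{1,\cdot}}$ summand arises when refining (ii) by writing $\eta(y-he)-\eta(y)=-|h|^{s-1}h\,D^s_{-h,e}\eta(y)$, which extracts a copy of $D^s_{\pm h,e}\eta$; pairing its gradient against $\bu\circ\bfPhi$ then closes via a H\"older-Sobolev chain in which $\bu\in W^{1,p'}$ embeds into $L^{\tilde a}$ by the range hypothesis $p'<\tilde a\leq dp'/(d-p')$, the three indices $\tfrac1{\tilde a}+\tfrac1q+\tfrac1p=1$ matching with $q=\tilde ap/(\tilde ap-\tilde a-p)$.

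For the \Bogovskij{} correction $\bog_\kappa(\divg\overline{\test}(\cdot))$ I use duality: Theorem~\ref{BogovskiiTeorem1} gives $\|\bog_\kappa(g)\|_{L^p}\lesssim\|g\|_{\hat W^{-1,p}}$, and $\|D^s_{h,e}\xi\|_{\hat W^{-1,p}(\omega)}\leq h^{1-s}\|\xi\|_{L^p(\omega)}$ by yet another summation by parts against the $W^{1,p'}$-dual test. Combined with $\|\bu\|_{L^{p'}}\leq\|\bu\|_{W^{1,p'}}$ and $h^{1-s}\leq h^{\theta-s}$, this is subsumed into the main bound. For \eqref{eq:partial2} I apply the same scheme to $\partial_t\test$; starting from the formula for $\partial_t\overline{\test}(\psi)$ derived in the proof of Proposition~\ref{TestFunction}, the time derivative splits into a $\partial_t\psi$-piece and a $\psi\,\partial_t\eta$-piece. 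The former reproduces the first summand of the target estimate after the same three-step commutator analysis applied to $\partial_t\xi$; the latter, after summation by parts in $y$ and H\"older with exponents $\tilde a'$ and $p'$, yields the second summand $\|D^s_{h,e}\xi\,\partial_t\eta\|_{L^{\tilde a'}(\omega)}\|\bu\|_{W^{1,p'}(\Omega_\eta)}$.

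The main obstacle I expect is piece (iii) above: since $\bu\in W^{1,p'}(\Omega_\eta)$ is not $L^\infty$, bounding $\int_{\eta(y)}^{\eta(y-he)}\bu(\bfPhi(s,y-he))\cdots\,ds$ in $L^{p'}(\omega)$ requires handling $\bu$ along slices parallel to $\partial\Omega$ inside a thin tube of H\"older-controlled thickness, combined with Fubini and the $W^{1,p'}$-trace on such slices. The second delicate point, responsible for the $W^{1,\tilde ap/(\tilde ap-\tilde a-p)}$-norm of $D^s_{h,e}\eta$, is the need to cleanly isolate the factor of $D^s_{\pm h,e}\eta$ hidden in $K(s,y-he)-K(s,y)$ and pair its gradient against $\bu\circ\bfPhi$ through the H\"older-Sobolev chain described above, with the exponent $\tilde a$ chosen precisely so that this pairing closes with $\|\bu\|_{W^{1,p'}}$.
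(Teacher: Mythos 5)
Your plan is correct in substance and follows the same backbone as the paper's proof: summation by parts for the fractional difference quotients, commutators with the $\eta$-dependent coefficients controlled through $\abs{\delta_h\eta}\lesssim h^\theta$ and through $D^s_h\nabla\eta$ paired via the H\"older triple $\tfrac1p+\tfrac1{\tilde a}+\tfrac{\tilde ap-\tilde a-p}{\tilde ap}=1$ with $\bu\in W^{1,p'}\hookrightarrow L^{\tilde a}$, the corrector handled by an extra summation by parts onto $\lambda_\eta$, the \Bogovskij{} part by negative-norm duality, and the time derivative split into the $\partial_t\xi$ and $\xi\,\partial_t\eta$ contributions. Where you genuinely deviate is the choice of coordinates: you parametrize $\Omega_\eta\cap S_{\kappa/2}$ by $\bfPhi(s,y)$ with the moving upper limit $s=\eta(y)$, so the tangential shift produces your explicit thin-tube endpoint term (iii), which then requires a collar/trace estimate of the form $\norm{\bu}^{p'}_{L^{p'}(\text{collar of width }h^\theta)}\lesssim h^\theta\norm{\bu}^{p'}_{W^{1,p'}(\Omega_\eta)}$ on the H\"older boundary — available via the trace theory the paper cites, but an extra ingredient. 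The paper instead pulls everything back by $\psi_\eta\circ\bfPhi$ to the fixed reference strip, so the variation of the domain is absorbed into the single estimate $\abs{\delta_h(\bu\circ\psi_\eta\circ\bfPhi)}\leq ch^\theta\dashint\abs{\nabla\bu}$, and the price is only the Jacobian $\tilde\gamma$, whose finite difference yields exactly the $D^s_h\nabla\eta$ term you also obtain from your kernel $K$; this avoids your main flagged obstacle altogether. Two smaller cautions: you cannot literally reduce to $\cor(D^s_{h,e}\xi)\equiv 0$, since $D^s_h\xi-\cor(D^s_h\xi)\neq D^s_h(\xi-\cor(\xi))$ — the summation by parts onto $\lambda_\eta$ that you mention is the correct replacement, and it produces an additional commutator (the paper's $\Tcal_3$) of the same admissible type; and in the \Bogovskij{} step the bound $\norm{D^s_h\xi}_{\hat W^{-1,p}}\leq h^{1-s}\norm{\xi}_{L^p}$ alone is not enough, because $\divg\overline{\test}(D^s_h\xi-\cor(D^s_h\xi))$ carries the coefficient $\lambda_\eta$ and the geometric factors, so moving the difference quotient onto the dual test function regenerates the same $h^\theta$ and $D^s_h\eta$ commutators — the paper handles this by re-running the estimates of the main term with $q\in W^{1,p'}_0(\Acal_\kappa)$ in place of $\bu$, and you should do the same.
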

\begin{proof}
In the following we use the abbreviation $\delta_hf(y)=(f(y+e_i h)-f(y))$ for $i=1,2$. Moreover, since all estimates are done pointwise in time, we omit the dependence on $t$ of $\eta$ and $\Omega_{\eta}$.
First, since the support of $\test (\delta_h\xi-\cor(\delta_h\xi))$ is $S_{\frac{\kappa}2}$, we can use the coordinates $(\bfp,s)$ on the full support of $\test (\delta_h\xi-\cor(\delta_h\xi))$. We will use the following change of coordinates $\psi_\eta\circ\bfPhi:\omega\times (\alpha+\kappa/2,0]\to \Omega_\eta$ in order to be able to do integration by parts. Hence
\begin{align*}
&\int_{\Omega_\eta} \bu\cdot \test (\delta_h\xi-\cor(\delta_h\xi))\, dx
\\
&\quad 
=\int_{\alpha+\kappa/2}^0\int_{\omega} (\bu\cdot \test (\delta_h\xi-\cor(\delta_h\xi)))\circ\psi_\eta\circ\bfPhi\abs{\det(\nabla(\psi_\eta\circ\bfPhi)}\, dy\, ds
\end{align*}
We will use the following abbreviations for the sake of a better overview:
\[
\alpha=\alpha(\Omega), \quad \tilde{\gamma}(s,y):=\abs{\det(\nabla(\psi_\eta\circ\bfPhi)(s,y)}\text{ and }\tilde{\lambda}_{\eta}(s,y):=e^{(s-\eta(\by)\divg_x(\bn(y))}\sigma_\kappa'(s)\tilde{\gamma}(s,y).
\] 
Hence, we calculate
\begin{align*}
\norm{{{\lambda_{\eta}}}}_{L^{1}(\Acal_\kappa)} \cor(\delta_h\xi)&=\int_{\alpha+\kappa/2}^{\alpha+\kappa}\int_{\omega} {{\lambda_{\eta}}} \circ\psi_\eta\circ\bfPhi \delta_h\xi \abs{\det(\nabla(\psi_\eta\circ\bfPhi)}\, dy\, ds
\\
&=:\int_{\alpha+\kappa/2}^{\alpha+\kappa}\int_{\omega} \delta_h\xi(y)\tilde{\lambda}_{\eta}(s,y)\, dy\, ds
=\int_{\alpha+\kappa/2}^{\alpha+\kappa}\int_{\omega} \xi(y)\delta_h\tilde{\lambda}_{\eta}(s,y)\, dy\, ds.
\end{align*}
where we used summation by parts formula for finite differences.
Therefore,
\[
\delta_h\xi-\cor(\delta_h\xi)=\delta_h(\xi-\cor(\xi))-\frac{\int_{\alpha+\kappa/2}^{\alpha+\kappa}\int_{\omega} \xi(y)\delta_h(\tilde{\lambda}_{\eta})\, dy}{\norm{{{\lambda_{\eta}}}}_{L^{1}(\Acal_\kappa)}}.
\]
And so
\begin{align*}
&\overline{\test}(\delta_h\xi-\cor(\delta_h\xi))\circ\psi_\eta\circ\bfPhi(s,y)
=
\delta_h(\overline{\test}(\xi-\cor(\xi))\circ\psi_\eta\circ\bfPhi(s,y))
\\
&\quad - \delta_h\Big(e^{(s-\eta(\by)\divg_x(\bn(y))}\Big)\sigma_\kappa(s)\Big(\xi-\mean{\xi}_{{{\lambda_{\eta}}}}\Big)\bn(y)
\\
&\quad -e^{(s-\eta(\by)\divg_x(\bn(y))}\sigma_\kappa(s)\Big(\xi-\mean{\xi}_{{{\lambda_{\eta}}}}\Big)\delta_h(\bn(y))
\\
&\quad -e^{(s-\eta(\by)\divg_x(\bn(y))}\sigma_\kappa(s)\frac{\int_{\alpha+\kappa/2}^{\alpha+\kappa}\int_{\omega} \xi(y)\delta_h(\tilde{\lambda}_{\eta})\, dy\, ds}{
\norm{{{\lambda_{\eta}}}}_{L^{1}(\Acal_\kappa)}}\bn(y)
\\
&=:\delta_h(\overline{\test}(\xi)\circ\psi_\eta\circ\bfPhi(s,y))+\Tcal_{1}+\Tcal_{2}+\Tcal_{3}.
\end{align*}
The partial summation and the H\"older's inequality imply that
\begin{align*}
&\absBB{\int_{\Omega_\eta} \bu\cdot \test (\delta_h\xi-\cor(\delta_h\xi))\, dx}
\\
&\quad \leq \absBB{\int_{\alpha+\kappa/2}^0\int_{\omega} (\bu\circ\psi_\eta\circ\bfPhi\cdot \delta_h\overline{\test} (\xi-\cor(\xi))\circ\psi_\eta\circ\bfPhi\,\tilde{\gamma}\, dy\, ds}
\\
&\qquad +\absBB{\int_{\alpha+\kappa/2}^0\int_{\omega} (\bu\circ\psi_\eta\circ\bfPhi\cdot (\Tcal_1+\Tcal_2+\Tcal_3))\tilde{\gamma}\, dy\, ds}
\\
&\qquad +\absBB{\int_{\Acal_\kappa}\bu\cdot \bog_\kappa (\divg( \overline{\test}(\delta_h\xi-\cor(\delta_h\xi))\, dx}
\\
&\quad =(I)+(II)+(III).
\end{align*}
Recall, that $p'< \tilde{a}\leq \frac{dp'}{d-p'}$ (if $p<d$ and no upper bound otherwise). Observe, that 
\[
\abs{\delta_h\eta}\leq ch^\theta,\quad \abs{\gamma}+\abs{\nabla (\psi_\eta\circ\bfPhi)}\leq c(1+\abs{\nabla \eta})\text{ and }\abs{\delta_h \tilde{\gamma}}\leq c\abs{\delta_h \nabla \eta}
\]
and
\begin{align*}
\abs{\delta_h\bu\circ\psi_\eta\circ\bfPhi}&\leq \absBB{\frac{\bu(\psi_\eta\circ\bfPhi(x+h))-\bu(\psi_\eta\circ\bfPhi(x))}{\psi_\eta\circ\bfPhi(x+h)-\psi_\eta\circ\bfPhi(x)}}\abs{\psi_\eta\circ\bfPhi(x+h)-\psi_\eta\circ\bfPhi(x)}
\\
&\leq ch^\theta\dashint_{\psi_\eta\circ\bfPhi(x)}^{\psi_\eta\circ\bfPhi(x+h)}\abs{\nabla u}\, ds.
\end{align*}
We estimate $(I)$ using partial integration, the above inequality, H\"older's inequality for $\frac{1}{p}+\frac{1}{\tilde{a}}+\frac{\tilde{a} p-p-\tilde{a}}{\tilde{a} p}=1$ and Sobolev embedding:
\begin{align}
\label{eq:I}
\begin{aligned}
(I)
&=\absBB{\int_{\alpha+\kappa/2}^0\int_{\omega} ( \delta_h(\bu\circ\psi_\eta\circ\bfPhi)\cdot \overline{\test} (\xi-\cor(\xi))\circ\psi_\eta\circ\bfPhi)\, \tilde{\gamma}
\\
&\quad +
(\bu\circ\psi_\eta\circ\bfPhi)\cdot \overline{\test} (\xi-\cor(\xi))\circ\psi_\eta\circ\bfPhi)\,\delta_h \tilde{\gamma} \, dy\, ds}
\\
&\leq ch^\theta \norm{\bu}_{W^{1,p'}(\Omega_{\eta})}\norm{\xi}_{L^p(\omega)} 
+ c\norm{\bu}_{L^{\tilde{a}}(\Omega_{\eta})}\norm{\xi}_{L^p(\omega)}
\norm{\delta_h\eta}_{W^{1,\frac{\tilde{a}p}{\tilde{a}p-\tilde{a}-p}}(\omega)}
\\
&\leq c\big(h^\theta+ \norm{\delta_h\eta}_{W^{1,\frac{\tilde{a}p}{\tilde{a}p-\tilde{a}-p}}(\omega)}\big)\norm{\bu}_{W^{1,p'}(\Omega_{\eta})}\norm{\xi}_{L^p(\omega)}.
\end{aligned}
\end{align}
We further estimate $(II)$ by using, in a rather straightforward manner, the fact that $\abs{\delta_h g}\leq h^\theta \norm{ g}_C^{0,\theta}$ 
\begin{align*}
\abs{\Tcal_1\tilde{\gamma}}&\leq c(h+\abs{\delta_h\eta})(\abs{\xi}\tilde{\gamma}+\abs{\mean{\xi}_{\tilde{\lambda}_{\eta}}\tilde{\lambda}_{\eta}})\leq  ch^\theta(\abs{\xi}\tilde{\gamma}+\abs{\mean{\xi}_{\tilde{\lambda}_{\eta}}\tilde{\lambda}_{\eta}}),
\\
\abs{\Tcal_2\tilde{\gamma}}&\leq ch(\abs{\xi}\tilde{\gamma}+\abs{\mean{\xi}_{\tilde{\lambda}_{\eta}}\tilde{\lambda}_{\eta}}), 
\\
\abs{\Tcal_3\tilde{\gamma}}&\leq c\frac{\abs{\tilde{\lambda}_{\eta}}}{\norm{{{\lambda_{\eta}}}}_{L^{1}(\Acal_\kappa)}}\norm{\xi}_{L^p(\omega)}\norm{\delta_h\eta}_{W^{1,p'}(\Omega)}.
\end{align*}
This implies 
\begin{align*}
\sum_{i=1}^2\int_{\alpha+\kappa/2}^{0}\int_{\omega}\abs{\Tcal_i\tilde{\gamma}}^p\, dy, \,ds &\leq ch^{\theta p} \norm{\xi}_{L^p(\omega)}^p\bigg(1+\frac{\norm{\tilde{\lambda}_{\eta}}_{L^{p'}(\Acal_\kappa)}^p}{\norm{\tilde{\lambda}_{\eta}}_{L^{1}(\Acal_\kappa)}^p}\bigg),
\\
\int_{\alpha+\kappa/2}^{0}\int_{\omega}\abs{\Tcal_3\tilde{\gamma}}^p\, dy, \,ds &\leq c\frac{\norm{\tilde{\lambda}_{\eta}}_{L^{p}([\alpha+\kappa/2,{0}]\times\omega)}^p}{\norm{{{\lambda_{\eta}}}}_{L^{1}(\Acal_\kappa)}^p}\norm{\xi}_{L^p(\omega)}^p\norm{\delta_h\eta}_{W^{1,p'}(\omega)}^p.
\end{align*}
Hence, we find
by H\"older's and Poincar\'e's inequality that
\begin{align}
\label{eq:lpbound}
(II)\leq c \norm{\bu}_{L^{p'}(\Omega_{\eta})}\sum_{i=1}^3\norm{\Tcal_i\tilde{\gamma}}_{L^p((\alpha+\kappa/2,0)\times \omega)} &\leq c(h^\theta+\norm{\delta_h\eta}_{W^{1,p'}(\omega)})\norm{\xi}_{L^p(\omega)} \norm{\bu}_{W^{1,p'}(\Omega_{\eta})}.
\end{align}
The estimates on $(I)$ and $(II)$ allow to estimate the \Bogovskij{} term $(III)$.
This is possible since due to Theorem~\ref{BogovskiiTeorem1} and due to the compact support of $\sigma'$ in $\Acal_\kappa$ we find
\begin{align*}
(III):&=\abs{\skp{\bu,\bog_\kappa(\divg( \overline{\test}(\delta_h\xi-\cor(\delta_h\xi)))}}
\\
&\leq \norm{\bu}_{W^{1,p'}(\Acal_\kappa)}\norm{\bog_\kappa(\divg( \overline{\test}(\delta_h\xi-\cor(\delta_h\xi)))}_{\mathring{W}^{-1,p}(\Acal_\kappa)}
\\
&\leq c\norm{\bu}_{W^{1,p'}(\Acal_\kappa)}\norm{\divg(\overline{\test}(\delta_h\xi-\cor(\delta_h\xi)))}_{\mathring{W}^{-2,p}(\Acal_\kappa)}
\\
&\leq c\norm{\bu}_{W^{1,p'}(\Acal_\kappa)}\norm{\overline{\test}(\delta_h\xi-\cor(\delta_h\xi))}_{\mathring{W}^{-1,p}(\Acal_\kappa)}.
\end{align*}
Now take $q\in W^{1,p'}_0(\Acal_\kappa)$, with $\norm{q}_{W^{1,p'}(\Acal_\kappa)}\leq 1$ arbitrary.
From the calculations above, i.e.\ by replacing $\bu$ by $q$ in~\eqref{eq:I} and~\eqref{eq:lpbound} we find
\begin{align*}
&\skp{\overline{\test}(\delta_h\xi-\cor(\delta_h\xi)),q}
\\
&\quad =\int_{\alpha+\kappa/2}^{0}\int_{\omega} (\delta_h(\overline{\test}(\xi-\cor(\xi)))\cdot q)\circ\psi_\eta\circ\bfPhi \tilde{\gamma}\, dy\, ds + \sum_{i=1}^3\int_{\alpha+\kappa/2}^{\alpha+\kappa}\int_{\omega} \Tcal_i \cdot q\circ\psi_\eta\circ\bfPhi \tilde{\gamma}\, dy\, ds
\\
&\quad =\int_{\alpha+\kappa/2}^{0}\int_{\omega} (
\overline{\test}(\xi-\cor(\delta_h\xi))\circ\psi_\eta\circ\bfPhi\cdot \delta_h(q \circ\psi_\eta\circ\bfPhi\tilde{\gamma})\, dy\, ds
\\
&\quad 
 + \sum_{i=1}^3\int_{\alpha+\kappa/2}^{\alpha+\kappa}\int_{\omega} \Tcal_i \cdot q\circ\psi_\eta\circ\bfPhi \tilde{\gamma}\, dy\, ds
 \leq c(h^\theta+ \norm{\delta_h\eta}_{W^{1,\frac{\tilde{a}p}{\tilde{a}p-\tilde{a}-p}}(\omega)})\norm{\xi}_{L^p(\omega)}. 
\end{align*}
But so
\begin{align*}
(III)&\leq c\norm{\bu}_{W^{1,p'}(\Acal_\kappa)}\norm{\overline{\test}(\delta_h\xi-\cor(\delta_h\xi))}_{\mathring{W}^{-1,p}(\Acal_\kappa)}
\\
&\leq c\norm{\bu}_{W^{1,p'}(\Acal_\kappa)}\Big(h^\theta+\norm{\delta_h\eta}_{W^{1,\frac{\tilde{a}p}{\tilde{a}p-\tilde{a}-p}}(\omega)}\Big)\norm{\xi}_{L^p(\omega)}.
\end{align*}
This finishes the proof of \eqref{eq:partial1}. For the time derivative we use the fact that
\begin{align}
\label{eq:cortime}
\begin{aligned}
&\partial_t(\delta_h\xi-\cor(\delta_h\xi))=(\delta_h\partial_t\xi-\cor(\delta_h\partial_t\xi))
-\frac{\mean{\delta_h\tilde{\xi}(t)}_{{\lambda_{\eta}}}}{\norm{{{\lambda_{\eta}}}(t)}_{L^1}}\int_{ \Acal_\kappa}\partial_t{{\lambda_{\eta}}}(t)\, dx
\\
&\qquad  + \frac{1}{\norm{{{\lambda_{\eta}}}(t)}_{L^1}}\int_{ \Acal_\kappa}\delta_h\tilde{\xi}(t)\partial_t{{\lambda_{\eta}}}(t)\, dx =:\cor(\delta_h\partial_t\xi))+K(t),
\end{aligned}
\end{align}
and hence
\begin{align*}
&\partial_t\overline{\test}(\delta_h\xi-\cor(\delta_h\xi))(t, x)
\\
&\quad = \sigma(s(x))e^{(s(x)-\eta(t,\by(x)))\divg(\bn(\bfp(x)))}(\delta_h(\partial_t\xi)-\cor(\delta_h(\partial_t\xi))(t,\by(x))\bn(\bfp(x))
\\
&\quad  -\sigma(s(x))\divg(\bn(\bfp(x)))\partial_t\eta(t,\by(x))  e^{(s(x)-\eta(t,\by(x)))\divg(\bn(\bfp(x)))}(\delta_h\xi-\cor(\delta_h\xi))(t,\by(x))\bn(\bfp(x))
\\
&\quad  -\sigma(s(x))\divg(\bn(\bfp(x)))e^{(s(x)-\eta(t,\by(x)))\divg(\bn(\bfp(x)))}K(t)\bn(\bfp(x))
\\
&=(A) +(B)+(C).
\end{align*}
The estimates on $(A)$ follows by \eqref{eq:partial1}. We proceed with the straightforward estimates
\[
 \abs{(B)}\leq c\abs{\partial_t\eta(t,\by(x))}(\abs{\delta_h(\xi(t,\by(x))}+\norm{\delta_h(\xi(t))}_{L^1(\Acal_\kappa)}),
\]
and
\[
\abs{(C)}\leq c\norm{\delta_h\xi(t)\partial_t\eta(t)}_{L^{1}(\omega)}.
\]
Hence, we find by \eqref{eq:partial1}, the estimates on $(B)$ and $(C)$, H\"older's inequality and Sobolev embedding that
\begin{align*}
&\absBB{\int_{\Omega_\eta} \bu\cdot \partial_t\overline{\test} (\delta_h\xi-\cor(\delta_h\xi))\, dx}
\\
&\quad \leq \Big(h^\theta+\norm{\delta_h\eta}_{W^{1,\frac{\tilde{a}p}{\tilde{a}p-\tilde{a}-p}}(\omega)}\Big)\norm{\partial_t\xi}_{L^p(\omega)}\norm{\bu}_{W^{1,p'}(\Omega_{\eta})} 
 +c\norm{\bu}_{L^{\tilde{a}}(\Omega_{\eta})}\norm{\delta_h\xi\partial_t\eta}_{L^{\tilde{a}'}(\omega)}
\\
&\quad \leq c\Big(\Big(h^\theta+\norm{\delta_h\eta}_{W^{1,\frac{\tilde{a}p}{\tilde{a}p-\tilde{a}-p}}(\omega)}\Big)\norm{\partial_t\xi}_{L^p(\omega)}+\norm{\delta_h\xi\partial_t\eta}_{L^{\tilde{a}'}(\omega)}\Big)\norm{\bu}_{W^{1,p'}(\Omega_{\eta})} .
\end{align*}
The \Bogovskij{} part will be estimated once more in form of negative norms using that
\begin{align*}
&\sup_{\norm{q}_{W^{1,p'}(\Acal_\kappa)}\leq 1}\skp{\partial_t\overline{\test}(\delta_h\xi-\cor(\delta_h\xi)))),q}
\\
 &\quad \leq c\Big(\big(h^\theta+\norm{\delta_h\eta}_{W^{1,\frac{\tilde{a}p}{\tilde{a}p-\tilde{a}-p}}(\omega)}\big)\norm{\partial_t\xi}_{L^p(\omega)}+\norm{\delta_h\xi\partial_t\eta}_{L^{\tilde{a}'}(\omega)}\Big),
\end{align*} 
which finishes the proof.
\end{proof}

\section{The regularity result}
\label{sec:thm2}
\subsection{Estimates for the structure}
In this section we explore the consequences of the energy inequality \eqref{EI}.
\begin{lemma}[Uniform Korn's inequality]
For every $\bu\in \mathcal{V}_F$ such that $\bu(t,\bvphi_{\eta}(t,.))=\xi\bn$ the following Korn's equality holds:
\begin{equation}\label{Korn}
\|\nabla\bu\|^2_{L^2{\Omega_{\eta}(t)}}=2\|{\rm sym}\nabla\bu\|^2_{L^2{\Omega_{\eta}(t)}}.
\end{equation}
\end{lemma}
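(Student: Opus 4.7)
The plan is to reduce the claim to a boundary integral via integration by parts. First I would use the pointwise algebraic identity
\[
2|\mathrm{sym}\nabla\bu|^2-|\nabla\bu|^2=\mathrm{tr}((\nabla\bu)^2)=\sum_{i,j}\partial_i u_j\,\partial_j u_i,
\]
so that the sought equality is equivalent to showing $\int_{\Omega_\eta(t)}\mathrm{tr}((\nabla\bu)^2)\, dx=0$. The divergence-free condition $\divg\bu=0$ (inherited from $\bu\in\mathcal{V}_F$) lets one rewrite $\mathrm{tr}((\nabla\bu)^2)=\partial_j(u_i\,\partial_i u_j)$, and a single application of the divergence theorem on the Lipschitz domain $\Omega_\eta(t)$ converts the bulk integral into
\[
\int_{\partial\Omega_\eta(t)}((\bu\cdot\nabla)\bu)\cdot\bn_{\Omega_\eta}\, dS.
\]

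With this reformulation, the lemma reduces to showing that this boundary integral vanishes under the coupling condition $\bu|_{\partial\Omega_\eta}=\xi\bn$, where $\bn$ is the reference normal. My next step is to pull everything back to $\omega$ via $\bvphi_\eta$, using the geometric identity $\bn_{\Omega_\eta}\, dS=\bn(\eta)\, dy$ with $\bn(\eta)=\ba_1(\eta)\times\ba_2(\eta)$. Since $\bu=\xi\bn$ on the deformed surface, $(\bu\cdot\nabla)\bu$ reduces there to $\xi(\bn\cdot\nabla)\bu$. I would then switch to the normal coordinates $(y,s)\mapsto\bvphi(y)+s\bn(y)$ from Definition~\ref{def:coor}; in these coordinates $\bn\cdot\nabla=\partial_s$, and differentiating $\bu(\bvphi_\eta(y))=\xi(y)\bn(y)$ in $y$ yields explicit formulas for the tangential derivatives $(\ba_\alpha(\eta)\cdot\nabla)\bu$ in terms of $\xi,\bn,\ba_\alpha$, and $\eta$.

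The main obstacle is the concrete cancellation in the boundary integrand. One must combine the tangential-derivative formulas just obtained with the pointwise condition $\divg\bu=0$ evaluated at the boundary in order to eliminate the a priori unknown normal contribution $(\bn\cdot\nabla)\bu\cdot\bn_{\Omega_\eta}$ in favor of tangential quantities. Concretely, one decomposes $\bn$ in the deformed frame $\{\ba_1(\eta),\ba_2(\eta),\bn_{\Omega_\eta}\}$ at each point $\bvphi_\eta(y)$ and uses the factorization of $\bn(\eta)$ against this decomposition to collect terms. The residual expression should be a tangential divergence over the closed surface $\partial\Omega_\eta(t)$, whose integral vanishes by the periodicity of the flat torus $\omega$; this geometric cancellation is the heart of the proof and once it is in place the Korn identity $\|\nabla\bu\|^2_{L^2(\Omega_\eta(t))}=2\|\mathrm{sym}\nabla\bu\|^2_{L^2(\Omega_\eta(t))}$ follows immediately.
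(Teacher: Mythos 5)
Your reduction is exactly the one the paper uses: the pointwise identity $2|\sym\nabla\bu|^2-|\nabla\bu|^2=\nabla^{T}\bu:\nabla\bu$, a single integration by parts exploiting $\divg\bu=0$ to kill the bulk term, and the conclusion that everything hinges on the vanishing of the boundary integral $\int_{\partial\Omega_{\eta}(t)}((\bu\cdot\nabla)\bu)\cdot\bn_{\Omega_{\eta}}\,dS$, which is the same quantity the paper writes as $\int_{\partial\Omega_{\eta}(t)}(\nabla\bu)\bn\cdot\bu\,dS$. Up to that point your argument is correct and coincides with the paper's (one minor caveat: for $\eta\in\mathcal{V}_K$ the domain $\Omega_{\eta}(t)$ is in general only H\"older, not Lipschitz, so the divergence theorem and the traces have to be justified via the results quoted in the paper rather than by a blanket ``Lipschitz domain'' assertion).

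The difference lies in how the boundary term is disposed of. The paper does not compute it: it appeals to the no-slip condition \eqref{kinematic} and incompressibility and cites \cite[Lemma A.5]{LenRuz}, following the idea of \cite[Lemma 6]{CDEM}. You instead propose to verify it by hand --- pull back to $\omega$ with $\bn_{\Omega_{\eta}}\,dS=\bn(\eta)\,dy$, differentiate the trace identity $\bu\circ\bvphi_{\eta}=\xi\bn$ tangentially, decompose the reference normal $\bn$ in the deformed frame, and use $\divg\bu=0$ at the boundary to trade the transversal derivative (which is not determined by the trace) for tangential ones. This plan is coherent, and it is essentially how the cited lemma is proved; note that $\divg\bu=0$ only yields the normal--normal component $\partial_{\bn_{\Omega_{\eta}}}\bu\cdot\bn_{\Omega_{\eta}}$, which suffices precisely because $\bn(\eta)$ is parallel to $\bn_{\Omega_{\eta}}$. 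However, you never actually carry out the decisive cancellation: the statement that the residual expression ``should be a tangential divergence'' over the closed surface is asserted, not verified, and you yourself identify it as the heart of the proof. As written the argument is therefore incomplete exactly at the point the paper covers by citation; to close it you must either perform that computation explicitly or simply invoke \cite[Lemma A.5]{LenRuz}.
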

\begin{proof}
We follow the idea from \cite[Lemma~6]{CDEM} and compute:
$$
\int_{\Omega_{\eta(t)}}|{\rm sym}\nabla\bu|^2\, dx
=\frac{1}{2}\Big (\int_{\Omega_{\eta(t)}}|\nabla u|^2\, dx
+\int_{\Omega_{\eta}(t)}\nabla^{T}\bu:\nabla\bu\, dx \Big ).
$$
Therefore it remains to show that the second term is zero:
\begin{align*}
&\int_{\Omega_{\eta}(t)}\nabla^{T}\bu:\nabla\bu\, dx
=\sum_{i,j=1}^2\int_{\Omega_{\eta}(t)}\partial_j u_i\partial_i u_j\, dx
\\
&\quad =-\sum_{i,j=1}^2\int_{\Omega_{\eta}(t)}\partial_j\partial_i u_i u_j\, dx+\int_{\partial\Omega_{\eta}(t)}\partial_ju_in_iu_jdS.
=\int_{\partial\Omega_{\eta}(t)}(\nabla\bu)\bn\cdot\bu dS
\end{align*}
Now using the no-slip condition \eqref{kinematic} and the incompressibility condition we deduce 
\\
$\int_{\partial\Omega_{\eta}(t)}(\nabla\bu)\bn\cdot\bu dS=0$ (see \cite[ Lemma A.5]{LenRuz}) and therefore the Korn's equality holds.
\end{proof}
In the following we exploit the energy estimate \eqref{EI-weak}. In particular, the number $C_0$,  which depends only on the initial conditions, always refers to this energy bound.
\begin{lemma}\label{W14Bound}
Let $(\bu,\eta)$ be such that energy inequality \eqref{EI-weak} is satisfied.
\\
 Then $\eta\in L^{\infty}(0,T;W^{1,4}(\omega))$ and $\|\eta\|_{L^{\infty}_tW^{1,4}_x}\leq cC_0$, where $c$ depends only on $\bvphi$.
\end{lemma}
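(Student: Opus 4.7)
\textbf{Proof plan for Lemma~\ref{W14Bound}.}

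The plan is to extract the $W^{1,4}$ bound directly from the membrane part of the Koiter energy via the energy inequality \eqref{EI-weak}, by exploiting the fact that the highest-order nonlinear term in the change-of-metric tensor $\mathbf{G}(\eta)$ is precisely the quartic form $\partial_\alpha\eta\,\partial_\beta\eta$.

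First, I note that \eqref{EI-weak} gives $\mathcal{E}_K(t,\eta)\leq C_0$ for a.e. $t\in(0,T)$. Since the contravariant metric tensor $\mathbf{A}$ associated with $\partial\Omega\in C^3$ is smooth, bounded and uniformly positive definite on $\omega$, and $\lambda,\mu>0$ are fixed, the elasticity tensor in \eqref{ElasticityTensor} satisfies $\mathcal{A}\mathbf{E}:\mathbf{E}\geq c_{\bvphi}|\mathbf{E}|^2$ for all symmetric $\mathbf{E}\in\R^{2\times 2}$. Applied to the membrane part of \eqref{KoiterEnergy}, this yields
\[
\int_\omega |\mathbf{G}(\eta(t,\cdot))|^2\,dy \leq c_{\bvphi}\,\mathcal{E}_K(t,\eta)\leq c_{\bvphi}\,C_0.
\]

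Next, from the explicit expression \eqref{ChangeMetric}, we have pointwise
\[
G_{\alpha\alpha}(\eta)=(\partial_\alpha\eta)^2+2\eta\,\ba_\alpha\cdot\partial_\alpha\bn+\eta^2|\partial_\alpha\bn|^2,\quad \alpha=1,2,
\]
so that, using the uniform boundedness of $\ba_\alpha$ and $\partial_\alpha\bn$ on $\omega$,
\[
(\partial_\alpha\eta)^4\leq C\bigl(G_{\alpha\alpha}(\eta)^2+\eta^2+\eta^4\bigr)
\]
with $C$ depending only on $\bvphi$. Integrating over $\omega$ and summing in $\alpha$ yields
\[
\|\nabla\eta(t)\|_{L^4(\omega)}^4\leq C\bigl(C_0+\|\eta(t)\|_{L^2(\omega)}^2+\|\eta(t)\|_{L^4(\omega)}^4\bigr).
\]

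The remaining step is to absorb the two right-hand $\eta$-terms. The $L^2$-piece is immediate: $\partial_t\eta\in L^\infty(0,T;L^2(\omega))$ by \eqref{EI-weak}, and $\eta_0\in L^2(\omega)$ since $\eta_0\in H^2(\omega)$, so for every $t\in[0,T]$
\[
\|\eta(t)\|_{L^2(\omega)}\leq \|\eta_0\|_{L^2(\omega)}+\sqrt{T\,C_0}.
\]
For the $L^4$-piece I apply the Gagliardo--Nirenberg inequality on the $2$-dimensional torus $\omega$, namely $\|\eta\|_{L^4(\omega)}\leq c\|\nabla\eta\|_{L^4(\omega)}^{1/3}\|\eta\|_{L^2(\omega)}^{2/3}+c\|\eta\|_{L^2(\omega)}$, and absorb via Young's inequality with a small $\varepsilon>0$:
\[
\|\eta(t)\|_{L^4(\omega)}^4\leq \tfrac{1}{2C}\|\nabla\eta(t)\|_{L^4(\omega)}^4+c_\varepsilon\|\eta(t)\|_{L^2(\omega)}^4.
\]
Substituting back and absorbing the $\|\nabla\eta\|_{L^4}^4$ term on the left-hand side gives the desired estimate, with a constant that depends on $\bvphi$, $T$, and $C_0$ (hence on $C_0$ alone up to the fixed geometric data).

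The main (minor) obstacle is the circular appearance of $\|\eta\|_{L^4}^4$ on the right-hand side; this is handled purely by the Gagliardo--Nirenberg/Young absorption above, and does not require the $H^2$-coercivity of Lemma~\ref{H2Bound}. This is important because the present lemma is meant to be available a priori, before any non-degeneracy assumption on $\gamma(\eta)$ is invoked.
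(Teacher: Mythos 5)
Your argument is correct, and its core is the same as the paper's: the pointwise coercivity of $\mathcal{A}$ (the cited Theorem 3.3-2 of Ciarlet) combined with \eqref{EI-weak} gives $\int_\omega|\bG(\eta(t,.))|^2\,dy\le c\,C_0$, and the $L^4$-bound on $\nabla\eta$ is then read off the diagonal entries $G_{\alpha\alpha}$ of \eqref{ChangeMetric}. Where you genuinely differ is in the control of the lower-order $\eta$-terms. The paper makes a case distinction: when $\partial_\alpha\bn\neq 0$ it exploits the favorable sign of $\eta^2|\partial_\alpha\bn|^2$ inside $G_{\alpha\alpha}$ (after Young on the cross term), so that the bound on $\|G_{\alpha\alpha}\|_{L^2}$ alone controls both $\|\partial_\alpha\eta(t)\|_{L^4}$ and $\|\eta(t)\|_{L^4}$ at each fixed time, the remaining direction following from $G_{\beta\beta}$; when $\partial_1\bn=\partial_2\bn=0$ it falls back on the $\partial_t\eta$-bound and Poincar\'e. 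You avoid this dichotomy entirely, bounding $\|\eta(t)\|_{L^2}$ by integrating $\partial_t\eta$ in time and removing the circular $\|\eta\|_{L^4}^4$ term by Gagliardo--Nirenberg interpolation and Young absorption. Your route is more uniform in the reference geometry (the paper's curved case tacitly needs $|\partial_\alpha\bn|$ bounded away from zero), whereas the paper's curved-case argument is what buys a constant depending only on $\bvphi$ and $C_0$, independent of $T$. Two points you should make explicit: the absorption of $\tfrac12\|\nabla\eta(t)\|_{L^4}^4$ presupposes that this quantity is finite for a.e.\ $t$, which does not follow from \eqref{EI-weak} alone but does hold here because $\eta\in\mathcal{V}_K\subset L^\infty(0,T;H^2(\omega))$ and $H^2(\omega)\hookrightarrow W^{1,4}(\omega)$ in two dimensions; and your $L^2$-in-time step should read $\|\eta(t)\|_{L^2}\le\|\eta_0\|_{L^2}+T\sqrt{2C_0}$ rather than $\sqrt{TC_0}$, so your final constant also depends on $T$ and $\|\eta_0\|_{L^2}$ --- the same dependence the paper's flat case incurs, but which its curved case avoids.
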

\begin{proof}
The boundedness of $\|\eta\|_{L^{\infty}_tL^2_x}$ follows directly from the energy inequality~\eqref{EI-weak}. Now, we use \cite[Theorem 3.3-2.]{CiarletBook3} to conclude that by the definition of $\Acal$ and \eqref{EI-weak}:
$$
\int_{\omega}|\bG(\eta(t,.))|^2\, dy \leq c\int_{\omega}\mathcal{A}\bG(\eta(t,.)):\bG(\eta(t,.))\, dy\leq cC_0,
$$ 
here the constant $c$ just depends on the Lam\'e constants and the geometry of $\partial \Omega$.
If $\partial_{\alpha}\bn\neq 0$ we my use the bound for $G_{\alpha\alpha}(\eta)$ and \eqref{ChangeMetric} to get the bounds for $\|\partial_{\alpha}\eta(t)\|_{L^4(\omega)}$ and $\|\eta(t)\|_{L^4(\omega)}$ uniform in $t$. Using these bounds, again \eqref{ChangeMetric} and the bound for $G_{\beta\beta}(\eta)$ above for $\beta\neq\alpha$ we finish the proof.

If $\partial_1\bn=\partial_2\bn=0$, we get the bound for $\|\nabla\eta\|_{L^4}$ directly from \eqref{ChangeMetric} and the boundedness of $\int_{\omega}|\bG(\eta(t,.))|^2$. However, since $\|\eta\|_{L^{\infty}_tL^2_x}$ is also bounded (using the bounds on $\partial_t\eta$ in \eqref{EI}), the Lemma follows also by the Poincar\' e inequality.  
\end{proof}

\begin{lemma}\label{H2Bound}
Let $(\bu,\eta)$ be such that energy inequality \eqref{EI-weak} is satisfied.
Then if $\gamma(\eta)\neq 0$ we have $\eta(t)\in H^2(\omega)$. Moreover,
$$
\sup_{t\in [0,T]}\int_\omega \gamma^2(\eta)\abs{\nabla^2\eta}^2\, dy\leq cC_0.
$$ 
 where $c$ depends only on $\bvphi$.
\end{lemma}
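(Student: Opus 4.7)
My plan is to isolate the $\nabla^2\eta$ contribution in the change-of-curvature tensor $\bR(\eta)$ and to control the remainder by lower-order norms already bounded via Lemma~\ref{W14Bound}. Differentiating \eqref{DeformedTangent} gives
\[
\partial_\alpha\ba_\beta(\eta)=\partial_\alpha\partial_\beta\eta\,\bn+\bfH_{\alpha\beta}(y,\eta,\nabla\eta),
\]
where $\bfH_{\alpha\beta}$ is a polynomial of degree at most one in $(\eta,\nabla\eta)$ with smooth $\bvphi$-dependent coefficients. Projecting \eqref{NormalDeformed} onto $\bn$, every cross-product involving $\bn$ drops out, yielding the key identity
\[
\bn\cdot\bn(\eta)=|\ba_1\times\ba_2|\,\gamma(\eta).
\]
Substituting into \eqref{CurvatureChange} gives the decomposition
\[
R_{\alpha\beta}(\eta)=\gamma(\eta)\,\partial_\alpha\partial_\beta\eta+F_{\alpha\beta}(y,\eta,\nabla\eta),
\]
with $F_{\alpha\beta}$ a polynomial of degree at most three in $(\eta,\nabla\eta)$ and smooth $\bvphi$-dependent coefficients.

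Next, since $\partial\Omega\in C^3$, the contravariant metric $\mathbf{A}$ is uniformly positive definite, so the elasticity tensor $\mathcal{A}$ is coercive on symmetric $2\times 2$ matrices. As $\bR(\eta)$ is symmetric, the energy inequality \eqref{EI-weak} gives
\[
\sup_{t\in[0,T]}\int_\omega|\bR(\eta(t))|^2\,dy\leq c\int_\omega\mathcal{A}\bR(\eta):\bR(\eta)\,dy\leq c\,C_0.
\]
For the remainder, Lemma~\ref{W14Bound} combined with the Sobolev embedding $W^{1,4}(\omega)\embed L^\infty(\omega)$ (valid since $\omega$ is a two-dimensional flat torus) gives $\eta\in L^\infty_tL^\infty_x$ and $\nabla\eta\in L^\infty_tL^4_x$. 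H\"older's inequality bounds every monomial of degree $\leq 3$ in $(\eta,\nabla\eta)$ in $L^2(\omega)$; in particular $|\nabla\eta|^2\eta\in L^2\cdot L^\infty=L^2$ and $|\nabla\eta|\eta^2\in L^4\subset L^2$. Hence $\sup_t\|F_{\alpha\beta}(\cdot,\eta,\nabla\eta)\|_{L^2(\omega)}\leq c\,C_0$, with $c$ depending only on $\bvphi$.

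The triangle inequality applied to the decomposition then gives
\[
\|\gamma(\eta)\,\partial_\alpha\partial_\beta\eta\|_{L^2(\omega)}\leq \|R_{\alpha\beta}(\eta)\|_{L^2(\omega)}+\|F_{\alpha\beta}\|_{L^2(\omega)}\leq c\,C_0,
\]
and squaring and summing over $\alpha,\beta$ yields the claimed estimate. The main obstacle is the algebraic bookkeeping in the first step, specifically verifying that the coefficient of $\nabla^2\eta$ in $\bR(\eta)$ is \emph{exactly} $\gamma(\eta)$: this works because every $\nabla\eta$-linear term in \eqref{NormalDeformed} appears inside a cross-product with $\bn$ and vanishes under projection onto $\bn$, while only the two $\eta$-dependent curvature-correction terms survive and match precisely the definition of $\gamma(\eta)$.
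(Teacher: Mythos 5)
Your proof is correct and follows essentially the same route as the paper: the decomposition $R_{\alpha\beta}(\eta)=\gamma(\eta)\partial^2_{\alpha\beta}\eta+P_0(\eta,\nabla\eta)$ with $P_0$ a cubic polynomial at most quadratic in $\nabla\eta$, coercivity of $\mathcal{A}$ to bound $\|\bR(\eta)\|_{L^\infty_tL^2_x}$ from the energy, and Lemma~\ref{W14Bound} plus the embedding $W^{1,4}(\omega)\embed L^\infty(\omega)$ to control the remainder. Your explicit verification that the coefficient of $\nabla^2\eta$ is exactly $\gamma(\eta)$ via the identity $\bn\cdot\bn(\eta)=|\ba_1\times\ba_2|\,\gamma(\eta)$ is a welcome clarification of a step the paper leaves implicit in \eqref{Curvature2}, but it is not a different argument.
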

\begin{proof}
We can again use Theorem 3.3-2. from \cite{CiarletBook3} and work with bounds on $\mathbf{R}$.
From \eqref{DeformedTangent} we compute:
\begin{equation}\label{TangentDerivatives}
\partial_{\beta}\ba_{\alpha}(\eta)
=\partial^2_{\alpha\beta}\bvphi+\partial^2_{\alpha\beta}\eta\bn
+\partial_{\alpha}\eta\partial_{\beta}\bn+\partial_{\beta}\eta\partial_{\alpha}\bn
+\eta\partial^2_{\alpha\beta}\bn,\; \alpha,\beta=1,2.
\end{equation}
Using \eqref{DeformedTangent}, \eqref{NormalDeformed}, \eqref{CurvatureChange} and the definition of $\gamma$ from Definition~\ref{def:coor}
we have
\begin{equation}\label{Curvature2}
\begin{array}{c}
\displaystyle{R_{\alpha\beta}(\eta)=\frac{1}{|\ba_1\times\ba_2|}\partial^2_{\alpha\beta}\eta\Big (|a_1\times a_2|+
\eta(\bn\cdot(\ba_1\times\partial_2\bn+\partial_1\bn\times\ba_2))
+\eta^2\bn\cdot(\partial_1\bn\times\partial_2\bn)\Big )}
\\
\displaystyle{+P_0(\eta,\nabla\eta)=:\gamma(\eta)\partial^2_{\alpha\beta}\eta+P_0(\eta,\nabla\eta)},
\end{array}
\end{equation}
where $P_0$ is a polynomial of order three in $\eta$ and $\nabla\eta$ such that all terms
are at most quadratic in $\nabla\eta$, and the coefficients of $P_0$ depend on $\bvphi$.  

From Lemma \ref{W14Bound} we gain in particular by Sobolev embedding that $\|\eta\|_{L^{\infty}_tL^{\infty}_x}$ and $\|\nabla\eta\|_{L^{\infty}_tL^4_x}$ are bounded by the energy. Therefore
$$
\sup_{t\in [0,T]}\int_\omega \gamma^2(\eta)\abs{\nabla^2\eta}^2\, dy\leq c(\|{\bf R}\|_{L^{\infty}_tL^2_x}+\|P_0(\eta,\nabla\eta)\|_{L^{\infty}_tL^2_x})\leq cC_0.
$$
\end{proof}
\begin{remark}
\label{rem:T}
By definition we know that $\gamma(\eta)>0$, as long as 
\begin{equation}\label{CoercitivityCondition}
\eta(\bn\cdot(\ba_1\times\partial_2\bn+\partial_1\bn\times\ba_2))
+\eta^2\bn\cdot(\partial_1\bn\times\partial_2\bn)>-\frac{1}{|\ba_1\times\ba_2|}.
\end{equation}
Therefore, since $\gamma(\eta_0)>0$, it follows that there exists a $c_2$ (depending on $\bvphi$ only) such that if $\|\eta-\eta_0\|_{L^{\infty}_tL^{\infty}_x}\leq c_2$, then~\eqref{CoercitivityCondition} is satisfied and hence $\gamma(\eta)>0$. Finally, the energy estimate allows to deduce directly (combining the $L^\infty_t W_x^{1,4}$ and the $W^{1,\infty}_tL^2_x$ estimate), that in dependence of the initial configuration there is a minimal time interval $(0,T)$ for which $\|\eta-\eta_0\|_{L^{\infty}_tL^{\infty}_x}\leq c_2$ is always satisfied.
\end{remark}

Similarly as in previous Lemma, let us write form $a_b$ defined by $\eqref{ElasticForms}$ as a sum of the bilinear form in second derivatives plus the remainder. 
We calculate the Fr\' echet derivative of $\mathbf{R}$:
$$
R_{\alpha\beta}(\eta)\xi=\gamma(\eta)\partial^2_{\alpha\beta}\xi+\gamma(\xi)\partial^2_{\alpha\beta}\eta+P_0'(\eta,\nabla\eta)\xi.
$$
Therefore we have
\begin{equation}\label{BendingFrom}
\begin{array}{c}
\displaystyle{a_b(t,\eta,\xi)=\frac{h^3}{24}\int_{w}\Big [\mathcal{A}\big (\gamma(\eta)\nabla^2\eta):\big (\gamma(\eta)\nabla^2\xi\big )
+\mathcal{A}\big (\gamma(\eta)\nabla^2\eta):\big (\gamma(\xi)\nabla^2\eta\big )}
\\ \\
\displaystyle{+\Big (\mathcal{A}\big (\gamma(\eta)\nabla^2\eta):P_0'(\eta,\nabla\eta)\xi
+\mathcal{A}\big (P_0(\eta,\nabla\eta)):\big (\gamma(\xi)\nabla^2\eta\big )\Big )}
\\ \\
\displaystyle{+\mathcal{A}\big (P_0(\eta,\nabla\eta)):\big (\gamma(\eta)\nabla^2\xi\big )
+\mathcal{A}\big (P_0(\eta,\nabla\eta)):P_0'(\eta,\nabla\eta)\xi\Big ] dy}
\\ \\
=a_b^1(\eta;\nabla^2\eta,\nabla^2\xi)
+a_b^2(\eta,\nabla^2\eta;\xi)
+a^3_b(\eta,\nabla\eta,\nabla^2\eta;\xi,\nabla\xi)
\\ \\
+a^4_b(\eta,\nabla\eta;\nabla\xi,\nabla^2\xi)+a^5_b(\eta,\nabla\eta,\xi,\nabla\xi).
\end{array}
\end{equation}
We take $\xi=D^{s}_{-h}D^{s}_h\eta$, $0<s<1/2$, and obtain the following estimates.
\begin{lemma}\label{StructureRegEstimate}
Let $\eta\in H^2(\omega)$ such that $\gamma(\eta)\neq 0$. Then for every $h>0$, $0<s<1/2$ the following inequality holds:
$$
a_b(t,\eta,D^{-s}_hD^{s}_h\eta)\geq \|D^{s}_h\nabla^2\eta\|_{L^2(\omega)}-C(\|\eta\|_{H^2(\omega)}).
$$
\end{lemma}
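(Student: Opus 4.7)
The statement (modulo a missing square on the right-hand side, i.e.\ $\|D^s_h\nabla^2\eta\|_{L^2(\omega)}^2$) is a coercivity estimate for the non-linear bending form, tested against a fractional finite-difference quotient. The strategy is to decompose $a_b$ according to \eqref{BendingFrom}, isolate the principal quadratic part $a_b^1$ as the source of coercivity, and absorb every remaining term into a lower-order remainder $C(\|\eta\|_{H^2(\omega)})$ via a combination of summation-by-parts for finite differences, Leibniz-type commutator identities for $D^s_h$, the positivity of the elasticity tensor $\mathcal{A}$ and of $\gamma(\eta)$, and the a priori bounds of Lemmas~\ref{W14Bound}--\ref{H2Bound} together with the Sobolev embeddings $H^2(\omega)\hookrightarrow C^0(\omega)$ and $W^{1,4}(\omega)\hookrightarrow C^{0,\alpha}(\omega)$ on the two-dimensional $\omega$.

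\textbf{Main term.} Since finite differences commute with $\nabla$ and $D^{-s}_h$ is the formal adjoint of $D^s_h$ (up to a translation), I write
\[
a_b^1(\eta;\nabla^2\eta,\nabla^2 D^{-s}_h D^s_h\eta)=\tfrac{h^3}{24}\int_\omega D^s_h\bigl(\mathcal{A}(\gamma(\eta)\nabla^2\eta)\,\gamma(\eta)\bigr):D^s_h\nabla^2\eta\,dy.
\]
Applying the discrete Leibniz rule $D^s_h(fg)(y)=D^s_h f(y)\,g(y+he)+f(y)\,D^s_h g(y)$ twice (once to $\gamma(\eta)\nabla^2\eta$ inside $\mathcal{A}$ and once to the outer factor $\gamma(\eta)$) produces one principal contribution
\[
\tfrac{h^3}{24}\int_\omega \mathcal{A}\bigl(\gamma(\eta)D^s_h\nabla^2\eta\bigr):\bigl(\gamma(\eta)D^s_h\nabla^2\eta\bigr)\,dy\;\geq\; c_0\,\gamma_0^2\,\|D^s_h\nabla^2\eta\|_{L^2(\omega)}^2,
\]
where $c_0>0$ comes from the uniform positivity of $\mathcal{A}$ on symmetric tensors (\cite[Thm.~3.3-2]{CiarletBook3}) and $\gamma_0>0$ is a pointwise lower bound for $\gamma(\eta)$, available by the hypothesis $\gamma(\eta)\neq 0$ together with continuity of $\gamma(\eta)$ (note $\eta\in H^2(\omega)\hookrightarrow C^0(\omega)$ in two dimensions, so $\gamma(\eta)$ is continuous and, being non-vanishing, uniformly bounded away from zero).

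\textbf{Commutator/cross terms.} The remaining terms produced by the Leibniz expansion are of the schematic form
\[
\int_\omega \mathcal{A}\bigl(D^s_h\gamma(\eta)\,\nabla^2\eta(\cdot+he)\bigr):\bigl(\gamma(\eta)D^s_h\nabla^2\eta\bigr)dy\quad\text{and similar.}
\]
Since $\gamma(\eta)$ is a quadratic polynomial in $\eta$ with $C^1$ coefficients, the pointwise chain rule for finite differences yields $|D^s_h\gamma(\eta)|\leq C(1+\|\eta\|_\infty)|D^s_h\eta|$. For $0<s<1/2$ the bound $\|D^s_h\eta\|_{L^\infty(\omega)}\leq C\|\eta\|_{W^{1,4}(\omega)}^\theta\|\eta\|_{H^2(\omega)}^{1-\theta}$ holds in two dimensions (Hölder embedding plus $W^{1,4}\hookrightarrow C^{0,1/2}$), so $D^s_h\gamma(\eta)\in L^\infty$ with a quantitative bound by Lemmas~\ref{W14Bound}--\ref{H2Bound}. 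Hence every commutator is controlled in modulus by $C(\|\eta\|_{H^2(\omega)})\,\|\nabla^2\eta\|_{L^2(\omega)}\,\|D^s_h\nabla^2\eta\|_{L^2(\omega)}$. Young's inequality with a small parameter $\varepsilon>0$ absorbs the factor $\|D^s_h\nabla^2\eta\|_{L^2(\omega)}$ into the coercivity term, generating a remainder of the desired form $C(\|\eta\|_{H^2(\omega)})$.

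\textbf{Lower-order forms $a_b^2,\ldots,a_b^5$.} These involve $P_0(\eta,\nabla\eta)$, which is a polynomial of degree three in $(\eta,\nabla\eta)$ that is at most quadratic in $\nabla\eta$, and its Fréchet derivative acting on $\xi=D^{-s}_h D^s_h\eta$ or $\nabla\xi$. The forms $a_b^3,a_b^5$ contain only $\xi$ and $\nabla\xi$ (no second derivative), hence after summation by parts they are estimated by $C(\|\eta\|_{H^2(\omega)})\|D^s_h\eta\|_{H^1(\omega)}$, which by interpolation between $L^2$ and $H^2$ is $\leq C(\|\eta\|_{H^2(\omega)})(1+\|D^s_h\nabla^2\eta\|_{L^2(\omega)}^{1/2})$, safely absorbable. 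The forms $a_b^2$ and $a_b^4$ contain $\gamma(\xi)\nabla^2\eta$ and $\gamma(\eta)\nabla^2\xi$ respectively; $a_b^4$ is treated by the same summation-by-parts/commutator argument as $a_b^1$ but with the low-regularity factor $P_0$ in place of $\gamma(\eta)\nabla^2\eta$, and $a_b^2$ is handled by first integrating by parts in $y$ (converting one spatial derivative falling on $\eta$ onto the smooth coefficient $\mathcal{A}\cdot\gamma(\eta)\nabla^2\eta$) and then using summation by parts for $D^{-s}_h$ to reduce to an $L^2\times L^2$ pairing estimated by $C(\|\eta\|_{H^2(\omega)})$. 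Collecting all contributions, choosing $\varepsilon$ small enough, and using $\gamma_0>0$ delivers the claim.

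\textbf{Main obstacle.} The delicate step is the commutator control. The pairing $D^s_h\gamma(\eta)\cdot\nabla^2\eta(\cdot+he)$ involves only an $L^2$ factor $\nabla^2\eta$; to avoid losing a derivative one must exploit that $\gamma(\eta)$ is only quadratic in $\eta$ and that $\eta\in W^{1,4}\cap L^\infty$ uniformly in $t$ (Lemma~\ref{W14Bound} plus Sobolev). Without the two-dimensional Sobolev embedding giving $\eta\in C^{0,\alpha}$, these commutators would not be tame and the coercivity would fail. This is exactly the reason why the geometric condition $\gamma(\eta)\neq 0$ and the $H^2$-information from Lemma~\ref{H2Bound} must be used simultaneously.
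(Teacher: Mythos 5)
Your proposal is correct and follows essentially the same route as the paper: the decomposition \eqref{BendingFrom}, summation by parts for the finite differences together with a discrete Leibniz rule on the principal term $a_b^1$, the uniform bounds $\|D^s_h\eta\|_{L^\infty}+\|D^s_{-h}D^s_h\eta\|_{L^\infty}\leq C\|\eta\|_{H^2(\omega)}$ for $s<\tfrac12$ coming from the two-dimensional Sobolev embedding, positivity of $\mathcal{A}$ with the lower bound on $\gamma(\eta)$, and absorption of the commutator and lower-order contributions ($a_b^2$--$a_b^5$, built from $P_0$ and Lemmas~\ref{W14Bound}--\ref{H2Bound}) by Young's inequality; your remark about the missing square on $\|D^s_h\nabla^2\eta\|_{L^2(\omega)}$ is also consistent with what the paper's proof actually yields. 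The only caveat is your treatment of $a_b^2$: integrating by parts in $y$ to move a derivative onto $\mathcal{A}(\gamma(\eta)\nabla^2\eta)$ would generate an uncontrolled $\nabla^3\eta$ and is unnecessary, since $a_b^2$ contains no derivatives of the test function and is bounded directly by $C(\|\eta\|_{H^2(\omega)})$ using $\|D^s_{-h}D^s_h\eta\|_{L^\infty(\omega)}\leq C\|\eta\|_{H^2(\omega)}$, which is exactly what the paper does.
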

\begin{proof}
Since all estimates in this lemma are uniform in $t$ for simplicity of notation, we omit the $t$ variable in this proof. First we use the fact that since $\omega\subset \Rbb^2$ Sobolev embedding implies $\|D^{s}_h\eta\|_{L^{\infty}}\leq c\|\eta\|_{H^2(\omega)}$ and $\|D^{s}_{-h}D^{s}_h\eta\|_{L^{\infty}(\omega)}\leq c\|\eta\|_{H^2(\omega)}$. Due to Sobolev embedding the estimate is uniform in $h$ for all $s\in (0,1/2)$. This and the integration by parts formula for the finite differences can be used to estimate $a_b^1$: 
$$
a^1_b(\eta;\nabla^2\eta,\nabla^2D^{s}_{-h}D^{s}_h\eta)\geq C\int_{\omega}|D^{s}_h\nabla^2\eta|^2\, dy
-C\|D^{s}_h\gamma(\eta)^2\|_{L^{\infty}}\|\nabla^2\eta\|_{L^2}\|D^{s}_h\nabla^2\eta\|_{L^2}
$$
$$\geq \frac{C}{2}\|D^{s}_h\nabla^2\eta\|^2_{L^2}-C\|D^{s}_h\gamma(\eta)^2\|^2_{L^{\infty}}\|\nabla^2\eta\|^2_{L^2}
\geq \frac{C}{2}\|D^{s}_h\nabla^2\eta\|^2_{L^2}-C(\|\eta\|_{H^2(\omega)}).
$$
Similarly, since $\|D^{s}_{-h}D^{s}_h\eta\|_{L^{\infty}(\omega)}\leq \|\eta\|_{H^2(\omega)}$ uniformly,  we estimate
$$
|a^2_b(\eta,\nabla^2\eta,D^{s}_{-h}D^{s}_h\eta)|\leq C(\|\eta\|_{H^2(\omega)}).
$$
To estimate $a^3_b$ we first notice that $\|P_0(\eta,\nabla\eta)\|_{L^2}\leq C\|\eta\|_{L^{\infty}}\|\nabla\eta\|_{L^4}^2\leq C(\|\eta\|_{H^2(\omega)})$. Moreover,
$$
\|P'_0(\eta,\nabla\eta)D^{s}_h\eta\|_{L^2}\leq C\|\eta\|_{L^{\infty}}\|\nabla\eta\|_{L^4}\|\nabla D^{s}_h\eta\|_{L^4}\leq C(\|\eta\|_{H^2(\omega)}).
$$
Now we can use integration by parts and Young's inequality in the same way as in the estimate for $a^1_b$ to get
$$
|a^3_b(\eta,\nabla\eta,\nabla^2\eta;D^{1/2}_{-h}D^{1/2}_h\eta,\nabla,D^{1/2}_{-h}D^{1/2}_h\eta)
\leq \frac{C}{8}\|D^{s}_h\nabla^2\eta\|^2_{L^2}+C(\|\eta\|_{H^2(\omega)}).
$$
Estimate for $a^4_b$ is done in analogous way by integration by parts and using:
$$
\|D^s_h P_0(\eta,\nabla\eta)\|_{L^2}\leq \|\eta\|_{L^{\infty}}\|\eta\|_{W^{1,4}}\nabla D^{s}_h\eta\|_{L^4}\leq C(\|\eta\|_{H^2(\omega)}).
$$
Hence,
$$
|a^4_b(\eta,\nabla\eta;\nabla D^{s}_{-h}D^{s}_h\eta,\nabla^2 D^{s}_{-h}D^{s}_h\eta)|
\leq \frac{C}{8}\|D^{s}_h\nabla^2\eta\|^2_{L^2}+C(\|\eta\|_{H^2(\omega)}).
$$
Finally, the last term $a^5_b$ is a lower order term and is easily estimated using the same inequalities:
$$
|a^5_b(\eta,\nabla\eta,D^{s}_{-h}D^{s}_h\eta,\nabla D^{s}_{-h}D^{s}_h\eta)|\leq C(\|\eta\|_{H^2(\omega)}).
$$
\end{proof}
\subsection{Closing the estimates--Proof of Theorem~\ref{RegTem}}
In this section we finish the proof of Theorem \ref{RegTem}. Please observe first, that due to the Sobolev embedding theorem and due to the trace theorem~\cite[Lemma~2.4]{BreitSchwarzacher} we find for all $\theta\in (0,1)$ and all $s\in (0,\frac12)$
\[
\norm{\eta}_{L^\infty(0,T;C^{0,\theta}(\omega))}\leq c\norm{\eta}_{L^\infty(0,T;H^2(\omega))}\text{ and } \norm{\partial_t\eta}_{L^2(0,T;H^s(\omega))}\leq c\norm{\bu}_{L^2(0,T;H^1(\Omega_{\eta}))}.
\]
Assume that $s\in (0,\frac12)$ and take 
\[
\Big (\test{\big (D^s_{-h}D^s_h\eta -\cor(D^s_{-h}D^s_h\eta)\big )},D^s_{-h}D^s_h\eta -\cor(D^s_{-h}D^s_h\eta)\Big)
\]
 as a test function in \eqref{WeakSolEq} and integrate from $0$ to $T$. The test function is admissible by construction, see Proposition \ref{TestFunction}. The estimates on the forms $a_m$ and $a_b$ connected to the elastic energy follow directly by Lemma~\ref{StructureRegEstimate}. Indeed, since $\cor(D^s_{-h}D^s_h\eta)$ is constant in space direction and hence does not change the estimate on the derivatives of $\eta$ we find (using the uniform bounds on $\lambda_\eta$) that
 $$
\inf_{\omega}{(\gamma^2(\eta))}\, a_b(t,\eta,(D^{-s}_hD^{s}_h\eta-\cor(D^s_{-h}D^s_h\eta))\geq \|D^{s}_h\nabla^2\eta\|_{L^2(\omega)}-C(\|\eta\|_{H^2(\omega)}).
$$
Hence we are left to estimate the term coming from the structure inertia.
Using partial integration and Corollary~\ref{cor:cor}, we find
\begin{align*}
&\int_0^T\absBB{\int_{\omega}\partial_t\eta\partial_t(D^s_{-h}D^s_h\eta -\cor(D^s_{-h}D^s_h\eta)\, dy}\, dt
=\int_0^T\absBB{\int_{\omega}\partial_t\eta D^s_{-h}(\partial_t(D^s_h\eta -\cor(D^s_h\eta))\, dy}\, dt
\\
&\quad =\int_0^T\absBB{\int_{\omega}(D^s_{h}\partial_t\eta)^2- D^s_h\partial_t\eta \partial_t\cor(D^s_h\eta)\, dy}\, dt
\\
&\quad  \leq c\|\partial_t\eta\|^2_{L^2(0,T;H^s(\omega))} + \int_0^T\norm{\partial_t\eta}_{W^{1,s}(\omega))}(\norm{\partial_t\eta}_{W^{1,s}(\omega))}+\norm{\partial_t\eta}_{L^2(\omega))}\norm{\nabla\eta}_{L^2(\omega)})\, dt
\\
&\quad \leq c\|\partial_t\eta\|^2_{L^2(0,T;H^s(\omega))} + cT\norm{\partial_t\eta}_{L^\infty(0,T;L^2(\omega))}^2\norm{\nabla\eta}_{L^\infty(0,T;L^2(\omega))}^2
\\
&\quad \leq c\|\bu\|^2_{L^2(0,T;H^1(\Omega_{\eta}(t))}+ cT\norm{\partial_t\eta}_{L^\infty(0,T;L^2(\omega))}^2\norm{\nabla\eta}_{L^\infty(0,T;L^2(\omega))}^2\leq c C_0^2.
\end{align*}
Here in the last estimate we used the trace theorem~\cite[Lemma~2.4]{BreitSchwarzacher} and the coupling condition \eqref{kinematic}. Notice that this term cannot be estimated in a purely hyperbolic problem and that here it is essential to use the coupling and the fluid dissipation.

Let us next prove the estimates related to the fluid part. From Proposition~\ref{TestFunction} and the energy inequality \eqref{EI-weak} we have the following estimate

\begin{align*}
&\|\nabla\test{(D^s_{-h}D^s_h\eta -\cor(D^s_{-h}D^s_h\eta))}\|_{L^{\infty}(0,T;L^2(\Omega_\eta(t))}
\\
&\quad 
\leq C\big (\|D^s_{-h}D^s_h\eta\|_{L^{\infty}(0,T;H^1(\omega))}+
\|(D^s_{-h}D^s_h\eta)\nabla\eta\| _{L^{\infty}(0,T;L^2(\omega))} \big )
\\
&\quad \leq C\big (
\|\eta\|_{L^{\infty}(0,T;H^2(\omega))}
+\|D^s_{-h}D^s_h\eta\|_{L^{\infty}(0,T;L^{\infty}(\omega))}
\|\nabla\eta\| _{L^{\infty}(0,T;L^2(\omega))}
\big )
\\
&\quad \leq C\big (
\|\eta\|_{L^{\infty}(0,T;H^2(\omega))}+\|\eta\|_{L^{\infty}(0,T;H^2(\omega))}^2
\big )
\leq C(C_0+C_0^2).
\end{align*}
This allows to estimate the integrals:
$$
\absBB{
\int_0^T\int_{\Omega_{\eta}(t)}(-\bu\otimes\bu:\nabla\test{(D^s_{-h}D^s_h\eta -\cor(D^s_{-h}D^s_h\eta))}
+\sym\nabla\bu:\sym\nabla\test{D^s_{-h}D^s_h\eta})
\, dx}
$$
$$
\leq \|\nabla\test{D^s_{-h}D^s_h\eta}\|_{L^{\infty}(0,T;L^2(\Omega_\eta(t))}(\|\nabla\bu\|^2_{L^2(0,T;L^2(\Omega_{\eta}(t)}+\|\nabla\bu\|_{L^2(0,T;L^2(\Omega_{\eta}(t)}))
\leq C(C_0+C_0^2)^2.
$$
The most difficult estimate is the estimate involving the distributional time-derivative of $v$. It can be estimated using Lemma~\ref{lem:IbP}; indeed by defining $p=2=p'$ and $\tilde{a}=6$ we get that $\frac{\tilde{a} p}{\tilde{a}p-\tilde{a}-p}=3$. Hence using the fact that 
$\frac{1}{2}+\frac{1}{3}=\frac{5}{6}$ and
\[
\frac{3}{2} -\frac{2}{3}=\frac{5}{6}<1=2-\frac{2}{2}\text{ and so }W^{\frac{3}{2},3}(\omega)\subset W^{2,2}(\omega),
\]
we find by H\"older's inequality and Sobolev embedding that for every $\theta\in (0,1)$ there is a constant $c$, such that
\begin{align*}
(I)=&\absBB{\int_0^T\int_{\Omega_{\eta}(t)}\bu\cdot\partial_t\test(D^s_{-h}D^s_h\eta-\cor(D^s_{-h}D^s_h\eta))\, dx}
\\
&\leq c\Big(h^{\theta-s}+\|D^s_h\eta\|_{L^\infty(0,T;W^{1,3}(\omega))}\Big)\|\bu\|_{L^{2}(0,T;W^{1,2}(\Omega_{\eta}(t))}\|\partial_tD^s_h\eta\|_{L^2(0,T;L^2(\omega))}
\\
&\quad 
+c\|\bu\|_{L^{2}(0,T;W^{1,2}(\Omega_{\eta}(t))}\norm{D^s_{-h}D^s_h\eta \partial_t\eta}_{L^2(0,T;L^{6/5}(\omega))}.
\end{align*}
Hence choosing $\theta=s$, we find
\begin{align*}
(I)& \leq c\Big(1+\|\eta\|_{L^\infty(0,T;W^{3/2,3}(\omega))}\Big)\|\bu\|_{L^{2}(0,T;W^{1,2}(\Omega_{\eta}(t))}\|\partial_tD^s_h\eta\|_{L^2(0,T;L^2(\omega))}
\\
&\quad 
+c\|\bu\|_{L^{2}(0,T;W^{1,2}(\Omega_{\eta}(t))}\norm{\partial_t\eta}_{L^\infty(0,T;L^2(\omega)}\norm{D^s_{-h}D^s_h\eta}_{L^2(0,T;L^3(\omega)}
\\
&\quad \leq c\Big(1+\|\eta\|_{L^\infty(0,T;W^{2,2}(\omega))}\Big)\|\bu\|_{L^{2}(0,T;W^{1,2}(\Omega_{\eta}(t))}\|\partial_tD^s_h\eta\|_{L^2(0,T;L^2(\omega))}
\\
&\qquad 
+c\|\bu\|_{L^{2}(0,T;W^{1,2}(\Omega_{\eta}(t))}\norm{\partial_t\eta}_{L^\infty(0,T;L^2(\omega)}\norm{\eta}_{L^2(0,T;W^{1,3}(\omega))}
\\
&\quad \leq cC_0 (C_0+C_0^2),
\end{align*}
and the estimate on the term of the time-derivative is complete. The result follows by combining the obtained estimates.
%
%

\section{Compactness rewritten}
\label{sec:auba}
We introduce the following version of the celebrated Aubin-Lions compactness lemma~\cite{aubin1963theoreme,lions1969quelques}. 
The version below is tailored to be applicable for the coupled systems of PDE like the fluid-structure interaction which we we study in this paper. The key point is to fully decouple the {\em compactness assumption} in space and the {\em compactness assumption} in time. We emphasize this fact by showing that under appropriate conditions the product of two weak convergent sequences decouple in the limit. It in some sense unifies ideas from time-space decoupling with compensated compactness approaches of div-curl type (see e.g.~\cite{BulBurSch18} for some further discussion on that matter).

In this context, the most difficult property to capture is the compactness in time assumption. Commonly it is given in the form of a uniform bound on time-derivative is certain dual space or more precisely a uniform continuity assumption in time. What turned out to be the key observation is that it suffices only to extract the uniform continuity properties over a suitable approximation of its argument. In the theorem below requirement (3) summarizes the time-compactness assumption. As can be seen, no function space is appearing. The assumption is that the pairing of the continuity in time for $g_n$ is uniform with respect to a given suitable approximation of $f_n$.
 
This non-function space type requirement is necessary for the application in the context of fluid-structure interactions. Indeed, the weak time derivative of an approximate sequence $\partial_t\eta_\varepsilon,v_\varepsilon$ is defined merely over a non-linear coupled space that changes both with respect to time and with respect to the approximation parameter $\epsilon$ itself.
\begin{theorem}
\label{thm:auba}
{Let $X,Z$ be two Banach spaces, such that $X'\subset Z'$.
Assume that $f_n:(0,T)\to X$ and $g_n: (0,T)\to X'$}. Moreover assume the following: 
\begin{enumerate}
\item The {\em weak convergence}: for some $s\in [1,\infty]$ we have that $f_n\toweakstar f$ in $L^s(X)$ and $g_n\toweakstar g$ in $L^{s'}(X')$.
\item The {\em approximability-condition} is satisfied: For every $\delta\in (0,1]$ there exists a $f_{n,\delta}\in L^s(0,T;X)\cap L^1(0,T;Z)$,
%
such that for every $\epsilon\in (0,1)$ there exists a $\delta_\epsilon\in(0,1)$ (depending only on $\epsilon$) such that 
\[
\norm{f_n-f_{n,\delta}}_{L^s(0,T;X)}\leq \epsilon\text{ for all } \delta\in (0,\delta_\epsilon]
\]
and for every $\delta\in (0,1]$ there is a $C(\delta)$ such that
\[
\norm{f_{n,\delta}}_{L^1(0,T;Z)}\, dt\leq C(\delta).
\]
Moreover, we assume that for every $\delta$ there is a function $f_\delta$, and a subsequence such that $f_{n,\delta}\toweakstar f_\delta$ in $L^s(X)$.

\item The {\em equi-continuity} of $g_n$. For every $\epsilon>0$ and $\delta>0$ that there exist a $n_{\epsilon,\delta}$ and a $\tau_{\epsilon,\delta}>0$, such that for all $n\geq n_{\eps,\delta}$  and all $\tau\in (0,\tau_{\eps,\delta}]$
\[
\int_0^{T-\tau} \absB{\dashint_{0}^\tau\skp{g_n(t)-g_n(t+s),f_{n,\delta}(t)}_{X',X}\, ds}\, dt\leq \epsilon.
\]

\item The {\em compactness assumption} is satisfied: $X'\hookrightarrow \hookrightarrow Z'$. More precisely, every uniformly bounded sequence in $X'$ has a strongly converging sub-sequence in $Z'$. 
\end{enumerate}
Then there is a subsequence, such that
\[
\int_0^T\skp{f_n,g_n}_{X,X'}\, dt\to \int_0^T\skp{f,g}_{X,X'}\, dt.
\]
\end{theorem}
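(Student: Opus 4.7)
The plan follows the classical Aubin--Lions philosophy, decoupling compactness in space (transferred to $f_n$ via the approximation $f_{n,\delta}$) from compactness in time (transferred to $g_n$ via a Steklov average). First I would fix $\epsilon>0$, choose $\delta=\delta_\epsilon$ according to assumption (2), and split
\begin{align*}
&\int_0^T\skp{f_n,g_n}_{X,X'}\,dt - \int_0^T\skp{f,g}_{X,X'}\,dt
\\
&\quad = \int_0^T\skp{f_n-f_{n,\delta},g_n}_{X,X'}\,dt
+ \int_0^T\skp{f_\delta-f,g}_{X,X'}\,dt
\\
&\qquad + \Big(\int_0^T\skp{f_{n,\delta},g_n}_{X,X'}\,dt - \int_0^T\skp{f_\delta,g}_{X,X'}\,dt\Big).
\end{align*}
The first term is bounded by $\norm{f_n-f_{n,\delta}}_{L^s(0,T;X)}\norm{g_n}_{L^{s'}(0,T;X')}\leq c\epsilon$ by (2) and the uniform bound on $g_n$ from (1). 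Passing to the weak-$*$ limit in the inequality of (2) and using lower semicontinuity yields $\norm{f-f_\delta}_{L^s(0,T;X)}\leq \epsilon$, so the second term is bounded analogously. The task reduces to showing, for each fixed $\delta$, that $\int_0^T\skp{f_{n,\delta},g_n}_{X,X'}\,dt \to \int_0^T\skp{f_\delta,g}_{X,X'}\,dt$ as $n\to\infty$.

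Next I would introduce the forward Steklov average $[g_n]_\tau(t):=\dashint_0^\tau g_n(t+s)\,ds$ and observe that assumption (3) is tailor-made to allow replacement of $g_n$ by $[g_n]_\tau$ in the pairing against $f_{n,\delta}$: the bulk residual $\int_0^{T-\tau}\skp{f_{n,\delta},g_n-[g_n]_\tau}_{X,X'}\,dt$ is bounded by $\epsilon$ once $\tau\leq\tau_{\epsilon,\delta}$ and $n\geq n_{\epsilon,\delta}$. The endpoint piece $\int_{T-\tau}^T$ is controlled by H\"older's inequality in the dual pair $L^s(X)\times L^{s'}(X')$ together with the supplementary bound $g_n\in L^\infty(Z')$ to cover potentially degenerate exponents. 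It therefore suffices to prove that for each fixed $\delta,\tau$,
\[
\int_0^{T-\tau}\skp{f_{n,\delta},[g_n]_\tau}_{X,X'}\,dt \longrightarrow \int_0^{T-\tau}\skp{f_\delta,[g]_\tau}_{X,X'}\,dt\quad\text{as } n\to\infty,
\]
and then let $\tau\to 0$ using $[g]_\tau\to g$ in $L^{s'}(0,T;X')$.

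The key point is that the averaging $g_n\mapsto [g_n]_\tau$ buys quantitative time regularity: a direct H\"older computation yields the pointwise bound $\norm{[g_n]_\tau(t)}_{X'}\leq \tau^{-1/s'}\norm{g_n}_{L^{s'}(0,T;X')}$ and the modulus of continuity $\norm{[g_n]_\tau(t+h)-[g_n]_\tau(t)}_{X'}\leq C\tau^{-1}h^{1/s}\norm{g_n}_{L^{s'}(0,T;X')}$, both uniform in $n$. Combining these with the compact embedding $X'\hookrightarrow\hookrightarrow Z'$ from (4), the Arzel\`a--Ascoli theorem produces a subsequence along which $[g_n]_\tau\to [g]_\tau$ strongly in $C([0,T-\tau];Z')$. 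The uniform bound $\norm{f_{n,\delta}}_{L^1(0,T;Z)}\leq C(\delta)$ from (2) then forces $\int\skp{f_{n,\delta},[g_n]_\tau-[g]_\tau}_{Z,Z'}\,dt\to 0$, while the residual $\int\skp{f_{n,\delta}-f_\delta,[g]_\tau}_{X,X'}\,dt$ tends to zero by the weak-$*$ convergence $f_{n,\delta}\toweakstar f_\delta$ in $L^s(0,T;X)$ tested against $[g]_\tau\in L^{s'}(0,T;X')$.

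I expect the main obstacle to be the careful ordering of limits and the diagonal subsequence extraction: the constant $C(\tau)$ in the Arzel\`a--Ascoli step blows up as $\tau\to 0$, so $\tau$ must be frozen before $n\to\infty$ is performed; the equicontinuity threshold $\tau_{\epsilon,\delta}$ from (3) depends on both $\epsilon$ and $\delta$; and a new subsequence must be extracted at each step, forcing a diagonal argument over a countable sequence $(\epsilon_k,\delta_k,\tau_k)\to 0$ together with a verification that the limit object is independent of the choices. A second, more conceptual subtlety is that (3) tests only against the specific approximant $f_{n,\delta}$, so no a priori time-equicontinuity of $g_n$ in a fixed function space is available; this weakening is precisely what makes the result applicable to fluid-structure interaction problems where the weak time derivative of $g_n=\partial_t\eta_n$ or $g_n=\bu_n$ is only defined over a coupled function space that changes with $n$.
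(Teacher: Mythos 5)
Your proposal follows the same skeleton as the paper's proof: replace $f_n$ by the spatially regular approximant $f_{n,\delta}$ (error controlled by assumption (2) and the uniform $L^{s'}(0,T;X')$ bound on $g_n$), average $g_n$ in time so that assumption (3) controls exactly the averaging error $\int_0^{T-\tau}\absb{\skp{g_n-[g_n]_\tau,f_{n,\delta}}}\,dt$, upgrade the time averages to strong $Z'$ convergence via the compact embedding (4), pair that strong limit against the $L^1(0,T;Z)$ bound of $f_{n,\delta}$, and dispatch the remaining term by the weak-$*$ convergence $f_{n,\delta}\toweakstar f_\delta$. The differences are in the device: you use the sliding Steklov average $[g_n]_\tau$ together with Arzel\`a--Ascoli in $C([0,T-\tau];Z')$, with equicontinuity coming from the H\"older bound $\norm{[g_n]_\tau(t+h)-[g_n]_\tau(t)}_{X'}\leq C\tau^{-1}h^{1/s}\norm{g_n}_{L^{s'}(X')}$, and you compare directly with $\int\skp{f_\delta,g}\,dt$; the paper instead uses piecewise-constant block averages $g_n^{\tau}$, so that for fixed $\tau$ only finitely many elements of $X'$ are involved, (4) is applied finitely often with no equicontinuity needed at all, and the conclusion is reached by a Cauchy-in-$(n,m)$ argument on $\int_0^{T-\tau_\epsilon}\skp{f_n,g_n}\,dt$ followed by a diagonal extraction. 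Your route gives a genuine limit statement rather than a Cauchy argument; the paper's route is more economical in what it demands of the averages.

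This economy matters at one point, and it is the one genuine gap in your argument: the endpoint $s=\infty$, $s'=1$, which the statement allows. There your modulus of continuity degenerates to $C\tau^{-1}h^{1/s}=C\tau^{-1}$, and equicontinuity really can fail: for $g_n=n\ONE_{[t_0,t_0+1/n]}x^*$ with a fixed $x^*\in X'$, the average $[g_n]_\tau$ climbs from $0$ to $\tau^{-1}\norm{x^*}$ over a $t$-interval of length $1/n$, so Arzel\`a--Ascoli is not available from the $L^1(0,T;X')$ bound alone (and the theorem does not assume a bound on $g_n$ in $L^\infty(0,T;Z')$ that is uniform in $n$). The paper's block-average device needs only the uniform $L^1(0,T;X')$ bound and therefore covers $s=\infty$; the simplest repair of your proof is to switch to block averages at this step (for the application in Section 6 one has $s=2$, so the issue is invisible there). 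Two smaller points deserve care but are shared with the paper's own argument: the terminal piece $\int_{T-\tau}^T\skp{f_{n,\delta},g_n}\,dt$ is not uniformly small in $n$ merely by H\"older and boundedness (the paper sidesteps this by proving convergence only on $[0,T-\tau_\epsilon]$ and diagonalizing), and when $s'=\infty$ the claim $[g]_\tau\to g$ in $L^{s'}(0,T;X')$ should be replaced by convergence of the pairing against the fixed $f_\delta$, obtained from continuity of translations in $L^1(0,T;X)$.
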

\begin{remark}[Modification for applications]
\label{rem:modification}
With regard of our application it seems somehow natural to replace (3) by the following condition
\begin{enumerate}
\item[(3')] 
The {\em equi-continuity} of $g_n$. We require that there exists an $\alpha\in (0,1]$ {a sequence $A_n$ that is uniformly bounded in $L^1([0,T])$}, such that for every $\delta>0$ that there exist a $C(\delta)>0$ and an $n_\delta\in \N$ such that for $\tau>0$ and a.e.\ $t\in [0,T-\tau]$
\[
\sup_{n\geq n_{\delta}} \absB{\dashint_{0}^\tau\skp{g_n(t)-g_n(t+s),f_{n,\delta}(t)}_{X',X}\, ds} \leq C(\delta)\tau^\alpha(A_n(t)+1).
\]
\end{enumerate}
Here (3') implies (3) by integration over $[0,T-\tau]$ and an appropriate choice of $\tau_{\delta,\epsilon}$.
\end{remark}

\begin{remark}[Classic Aubin-Lions lemma]
Let us explain how Theorem \ref{thm:auba} relates to the classic Aubin-Lions lemma. The simplest case is when $Z$ is a compact subspace of $X$, $f\in L^2(Z)$ and $\partial_tg_n\in L^2(Z')$. In this case one may take $f_{n,\delta}=f_n$ and finds for $s<t$
\[
\abs{\skp{g_n(t)-g_n(s),f_m(t)}}=\Big|\int_{s}^t\skp{\partial_t g_n(\tau)}{f_m(t)}\, d\tau
\Big|\leq \norm{f_m(t)}_Z\int_{s}^t\norm{\partial_t g_n(\tau)}_{Z'}\, d\tau\leq c\abs{t-s}^\frac{1}{2}\norm{f_m(t)}_Z,
\]
with $c=\norm{\partial_t g_n}_{L^2(0,T;Z')}$.
The classic Gelfand triple is then the particular case when $X$ is a Hilbert space. Since then $Z\subset \subset X \subset Z'$ implies the same argument as above. 

The generalization to allow that $Z$ is independent of the regularity of $f_n$ is essentially some hidden interpolation result (also known as Ehrling property). Here classically one can use convolution estimates to show that a mollifier in one space is uniformly close, while in the other (smaller space) they are merely bounded. One standard example is the periodic solutions over the torus $Q$ and $X=H^a_{per}(Q)$ and $Z=H^c_{per}(Q)$, such that $f_n\in H^b_{per}(Q)$ uniformly with $a<b<c$. Then convolution with the standard mollifying kernel $\psi_\delta$ implies that $\norm{f-f*\psi_\delta}_{H^r_{per}(Q)}\leq C\delta^{s-r}\norm{f}_{H^s_{per}(Q)}$ which implies precisely the wanted properties in (2) above. This shows that condition (2) can be seen as a "spatial compactness" condition in the Aubin-Lions lemma (or more generally in Simon's compactness theorem \cite{Simon}). The condition (3) could be viewed as a "temporal compactness", i.e.\ as equi-continuity of time shifts in the weaker space $Z'$.
\end{remark}

\begin{remark}[Function spaces]
Since we do not assume that $Z$ is dense in $X$, an additional clarification of condition (5) is required. Let $\overline{Z}^{\norm{\cdot}_X}{=\overline{X\cap Z}^{\norm{\cdot}_X}}$ be the closure of $Z$ w.r.t.\ $X$ and $(\overline{Z}^{\norm{\cdot}_X})'$ its dual (w.r.t.\ $X$ as pivot space). Then condition (5) has a meaning in the following sense $(\overline{Z}^{\norm{\cdot}_X})'\hookrightarrow \hookrightarrow Z'$.
\end{remark}

\begin{proof}[Proof of Theorem~\ref{thm:auba}]
In this prove we will produce for every $\epsilon>0$ an $n_\epsilon\in \N$, a $\tau_\epsilon>0$ with $\tau_\epsilon\to 0$ for $\epsilon\to 0$ and a subsequence of $\skp{f_n,g_n}_{X,X'}$ such that 
\[
\absB{\int_0^{T-\tau_\epsilon}\skp{f_n,g_n}_{X,X'}-\skp{f_m,g_m}_{X,X'}\, dt}\leq \epsilon\]
for all $n,m\geq n_\epsilon$. This then allows to construct the desired converging subsequence by taking a discrete sequence $\epsilon_i\to 0$ and a respective diagonal argument. 

Hence let $\epsilon>0$, we may choose $\delta_\epsilon$ in such a way, that for all $\delta\in (0,\delta_\epsilon]$,
\begin{align}
\label{eq:approx}
\norm{f_n-f_{n,\delta}}_{L^s(0,T;X)}\leq \epsilon
\end{align}
 Next we fix $\tau_{\epsilon,0}>0$ and $n_{\epsilon,0}$, such that for all $\tau\in (0,\tau_{\epsilon,0}]$
\begin{align}
\label{eq:4}
\sup_{n\geq n_{\epsilon,0}}\int_0^{T-\tau} \absB{\dashint_0^\tau\skp{g_n(t)-g_n(s),f_{n,\delta}(t)}_{X',X}\, ds}\, dt\leq \epsilon.
\end{align}
Fix $N\in \N$ such that $\tau_\epsilon:=\frac{T}{N}\leq \tau_{\epsilon,0}$. 
For $k\in\{0,...,N-1\}$ and $n\in \N$ we define
\[
g_n^k=\dashint_{k\tau}^{(k+1)\tau}g_n(s)\, ds.
\]
 This implies by Jensen's inequlaity 
\[
\norm{g_n^k}_{X'}\leq \dashint_{k\tau}^{(k+1)\tau}\norm{g_n(s)}_{X'}\, ds,
\]
and so we define for the given $\tau_\epsilon$
\[
g_n^{\tau_\epsilon}(t):=g_n^k\text{ for }t\in [k\tau_\epsilon,(k+1)\tau_\epsilon).
\]
Since
\[
\sup_{[k\tau_\epsilon,(k+1)\tau_\epsilon]\subset[0,T]}\sup_{n\in \N}\norm{g_n^k)}_{X'}\leq \frac{C}{\tau_\epsilon},
\]
we find
by the {\em compactness assumption,} that we can find a subsequence for which there exists a $n_{\epsilon,1}$, such that
\begin{align}
\label{eq:comp}
\sup_{k}\norm{g_n^k-g_m^k}_{Z'}\leq \epsilon_0\text{ for all }n,m>n_{\epsilon,1}.
\end{align}
In particular there exists $g^{\tau_\epsilon}$ and a subsequence, such that$g_n^{\tau_\epsilon}\to g^{\tau_\epsilon}$ strongly in $L^\infty(0,T;Z')$. Clearly, by the uniform bounds we find that $g^\tau\in L^{s'}(X')$.

At this point $\tau_\epsilon$ and $\delta_\epsilon$ are fixed. Hence we may define 
\[
\epsilon_0:=\frac{\epsilon}{C(\delta_{\epsilon})},
\]
where $C(\delta_{\epsilon})$ is defined via (2). Therefore, we find an $n_\epsilon\in \N$, such that for all $n,m\geq n_{\epsilon}$
\begin{align}
\label{eq:3}
\absB{\int_0^T\skp{f_{n,\delta_\epsilon},g_n^{\tau_\epsilon}-g_m^{\tau_\epsilon}}\, dt}_{X,X'}\leq \norm{f_{n,\delta_\epsilon}}_{L^1(0,T;Z)}\norm{g_n^{\tau_\epsilon}-g_m^{\tau_\epsilon}}_{L^\infty(0,T;Z'}\leq \epsilon.
\end{align}
Now all preparations have been made in order to estimate:
\begin{align*}
&\absB{\int_0^{T-\tau_\epsilon}\skp{f_n(t),g_n(t)}_{X,X'}-\skp{f_m(t),g_m(t)}_{X,X'}\, dt}
\\
&\quad\leq \absB{\int_0^{T-\tau_\epsilon}\skp{f_{n,\delta_\epsilon}(t),g_n(t)}_{X,X'}-\skp{f_{m,\delta_\epsilon}(t),g_m(t)}_{X,X'}}
\\
&\qquad+ \absB{\int_0^{T-\tau_\epsilon}\skp{f_n(t)-f_{n,\delta_\epsilon}(t),g_n(t)}_{X,X'}-\skp{f_m(t)-f_{m,\delta_\epsilon}(t),g_m(t)}_{X,X'}\,dt}
\\
&
\quad \leq \absB{\int_0^{T-\tau_\epsilon}\skp{f_{n,\delta_\epsilon}(t),g_n(t)}_{X,X'}-\skp{f_{m,\delta_\epsilon}(t),g_m(t)}_{X,X'}}+2\epsilon.
\end{align*}
We estimate the left 
\begin{align*}
(I):&=\int_0^{T-\tau_\eps}\skp{f_{n,\delta_\epsilon}(t),g_n(t)}_{X,X'}-\skp{f_{m,\delta_\epsilon}(t),g_m(t)}_{X,X'}\, dt
\\
&=\int_0^{T-\tau_\eps}\skp{f_{n,\delta_\epsilon}(t)
,g_n(t)-g_n^{\tau_\eps}(t)}_{X,X'}\, dt
 +
\int_0^{T-\tau_\eps}\skp{f_{n,\delta_\epsilon}(t),g_n^{\tau_\eps}(t)-g_n^{\tau_\eps}(t)}_{X,X'}\, dt
\\
&\quad +\int_0^{T-\tau_\eps}\skp{f_{m,\delta_\epsilon}(t)-f_{n,\delta_\epsilon}(t),g_m^{\tau_\eps}(t)}_{X,X'}\, dt
-\int_0^{T-\tau_\eps}\skp{f_{m,\delta_\epsilon}(t),g_m(t)-g_n^{\tau_\eps}(t)}_{X,X'}\, dt
\\
&=\sum_{k=0}^{N-2}\int_{k\tau_\eps}^{(k+1)\tau_\eps}\dashint_0^{\tau_\eps}\skp{f_{n,\delta_\epsilon}(t)
,g_n(t)-g_n(s)}_{X,X'}\, ds\, dt
 +
\sum_{k=0}^{N-2}\int_{k\tau_\eps}^{(k+1)\tau_\eps}\skp{f_{n,\delta_\epsilon}(t),g_n^k-g_m^k}_{X,X'}\, dt
\\
&\quad +\sum_{k=0}^{N-2}\int_{k\tau_\eps}^{(k+1)\tau_\eps}\skp{f_{m,\delta_\epsilon}(t)-f_{n,\delta_\epsilon}(t),g_m^k}_{X,X'}\, dt
-\sum_{k=0}^{N-2}\int_{k\tau_\eps}^{(k+1)\tau_\eps}\dashint_0^{\tau_\eps}\skp{f_{m,\delta_\epsilon}(t),g_m(t)-g_m(s)}_{X,X'}\, ds\, dt
\\
&= (II)+(III)+(IV)+(V).
\end{align*}
First observe that $(II)$ and $(V)$ can be estimated using the {\em equi-continuity condition}, namely \eqref{eq:4}. Term $(III)$ is estimated using the {\em compactness condition}, namely \eqref{eq:3}. Finally for $(IV)$ we deviate and apply \eqref{eq:3} a second and third time
\begin{align*}
(IV)&=\int_0^T\skp{f_{m,\delta_\epsilon}-f_{n,\delta_\epsilon},g_m^{\tau_\epsilon}}_{X,X'}\, dt
\\
&=\int_0^T\skp{f_{m,\delta_\epsilon}-f_{n,\delta_\epsilon},g^{\tau_\epsilon}}_{X,X'}+\skp{f_{m,\delta_\epsilon}-f_{n,\delta_\epsilon},g^{\tau_\epsilon}_m-g^{\tau_\epsilon}}_{X,X'}\, dt
\\
&\leq 
\absB{\int_0^T\skp{f_{m,\delta_\epsilon}-f_{n,\delta_\epsilon},g^{\tau_\epsilon}}_{X,X'}} + 2\epsilon.
\end{align*}
Now we take another subsequence of $f_{n,\delta_\epsilon}\in L^{s}(X)$ 
that converges weakly*. Hence we may eventually increase $n_\epsilon$ one last time (in dependence of $g^{\tau_\epsilon}$) and find that for this subsequence and $n,m\geq n_\epsilon$
\[
\abs{(IV)}\leq 3\epsilon.
\]
This finishes the proof.
\end{proof}

\section{The existence result}
\label{sec:thm1}
\subsection{The approximate system}
In this section we construct approximate solutions $(\bu^{\varepsilon},\eta^{\varepsilon})\in \mathcal{V}_S$, $\eta^{\varepsilon}\in L^{\infty}(0,T;H^3(\omega))$ which satisfy the following weak formulation:
\begin{equation}\label{WeakSolEqReg}
\begin{array}{c}
\displaystyle{\frac{d}{dt}\int_{\Omega_{\eta}(t)}\bu^{\varepsilon}\cdot\bq\, dx+\int_{\Omega_{\eta}(t)}\Big (-\bu^{\varepsilon}\cdot\partial_t\bq-\bu^{\varepsilon}\otimes\bu^{\varepsilon}:\nabla\bq+\sym\nabla\bu^{\varepsilon}:\sym\nabla\bq\Big )\, dx}
\\
\displaystyle{+\frac{d}{dt}\int_{\omega}\partial_t\eta^{\varepsilon} \xi\, dy-\int_{\omega}\partial_t\eta^{\varepsilon}\partial_t \xi
\, dy+a_m(t,\eta^{\varepsilon},\xi)+a_b(t,\eta^{\varepsilon},\xi)
+\varepsilon\int_{\omega}\nabla^3_x\eta^{\varepsilon}:\nabla^3_x \xi\, dy=0,}
\end{array}
\end{equation}
where $\varepsilon>0$ is a regularizing parameter and with initial conditions $\eta_0,\eta_1,\bu_0$.
In this section we prove the following Theorem:
\begin{theorem}\label{ApproxSolution} There exists a $T>0$ just depending on $\partial \Omega$ and the initial data, such that for every $\varepsilon\in(0,1]$ there exists a weak solution $(\bu^{\varepsilon},\eta^{\varepsilon})$ to the regularized problem~\eqref{WeakSolEqReg}. Moreover, the weak solution satisfies the following uniform in $\varepsilon$ estimate: 
\begin{equation}\label{UniformEnergyEps}
\|\bu^{\varepsilon}\|_{\mathcal{V}_F}+\|\eta^{\varepsilon}\|_{\mathcal{V}_S}
+\|\eta^{\varepsilon}\|_{L^2(0,T;N^{s,2}(\omega))}\leq C,
\end{equation}
for every (fixed) $s<\frac{5}{2}$, with $C$ just depends on $\partial \Omega$ and the initial conditions.
\end{theorem}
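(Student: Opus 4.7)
The plan is to build $(\bu^\varepsilon,\eta^\varepsilon)$ by a decoupled time-discretization (Lie/Rothe splitting) in the spirit of~\cite{CDEM,BorSun,LenRuz}, and then pass to the limit in the discretization parameter with $\varepsilon>0$ fixed. The role of the $\varepsilon\nabla^3$ regularization is twofold: it makes the discrete shell subproblem strictly $H^3$-coercive (so the non-convex Koiter stress $\mathcal{L}_K$ is dominated and the subproblem is uniquely solvable by direct minimization), and it forces $\eta^\varepsilon\in L^\infty(0,T;H^3(\omega))$, which yields a $C^{2,\theta}$ moving domain on which the fluid subproblem falls within the standard Navier--Stokes theory.

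Concretely, for $\Delta t=T/N$, at each time level $n$ I would alternate: given $\eta^{\varepsilon,n-1}$, solve the fluid sub-step on $\Omega_{\eta^{\varepsilon,n-1}}$ with the kinematic coupling~\eqref{kinematic} enforced (e.g.\ via a Robin-type penalty consistent with~\cite{BorSun}); then, given the resulting traction, solve the structure sub-step for $\eta^{\varepsilon,n}$. Summation of the two discrete energy identities (one tests with the velocity, the other with the discrete time derivative of the displacement) yields a discrete energy inequality mimicking~\eqref{EI-weak}, with the extra non-negative dissipation $\varepsilon\|\nabla^3\eta^{\varepsilon,n}\|_{L^2}^2$. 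Compactness for $\Delta t\to 0$ is carried out via Theorem~\ref{thm:auba}, where the solenoidal extension of Proposition~\ref{TestFunction} (and Corollary~\ref{cor:extend}) provides the approximating family $f_{n,\delta}$ demanded by hypothesis (2) and shuttles the time-continuity of the discrete velocities between the changing fluid and reference configurations.

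The $\varepsilon$-uniform bound~\eqref{UniformEnergyEps} splits into two pieces. The $\mathcal{V}_F$ and $\mathcal{V}_K$ parts are inherited from the energy inequality~\eqref{EI-weak}, which is preserved by the regularization since $\varepsilon\|\nabla^3\eta^\varepsilon\|_{L^2}^2$ enters with a favourable sign. The Nikolskii regularity $L^2(0,T;N^{s,2}(\omega))$ for $s<5/2$ is exactly the gain $L^2(H^{2+\sigma})$ with $\sigma<\tfrac12$ of Theorem~\ref{RegTem}; I would therefore test~\eqref{WeakSolEqReg} with the extension $\test\bigl(D^\sigma_{-h}D^\sigma_h\eta^\varepsilon-\cor(D^\sigma_{-h}D^\sigma_h\eta^\varepsilon)\bigr)$ and run the estimates of Section~\ref{sec:thm2} verbatim. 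The only new term,
\[
\varepsilon\int_\omega \nabla^3\eta^\varepsilon:\nabla^3(D^\sigma_{-h}D^\sigma_h\eta^\varepsilon)\,dy
=\varepsilon\,\|D^\sigma_h\nabla^3\eta^\varepsilon\|_{L^2(\omega)}^{2}\ge 0,
\]
is non-negative after summation-by-parts in the difference quotient and can be dropped, so the final bound is independent of $\varepsilon$.

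The main obstacle is securing a minimal $T>0$ that depends only on $\partial\Omega$ and the initial data and is uniform along the approximating sequence. The construction requires both $\alpha(\Omega)+\kappa<\eta^\varepsilon<\beta(\Omega)-\kappa$ (well-defined tubular coordinates, no self-contact) and $\gamma(\eta^\varepsilon)\neq 0$ ($H^2$-coercivity, cf.\ Lemma~\ref{H2Bound} and Remark~\ref{rem:T}); both are open conditions in the $L^\infty_{t,x}$ topology. The uniform bound $\eta^\varepsilon\in L^\infty(0,T;H^2(\omega))\cap W^{1,\infty}(0,T;L^2(\omega))$ together with interpolation yields uniform H\"older continuity in time with values in $C^{0,\theta}(\omega)$ (via Sobolev embedding in two variables), so that the $L^\infty$-distance of $\eta^\varepsilon(t)$ from $\eta_0$ is controlled by $t^{1/2}C_0^{1/2}$ times a geometric constant. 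Choosing $T$ small enough as a function of $\partial\Omega$ and $C_0$ then guarantees that both geometric conditions persist on $[0,T]$ uniformly in $\varepsilon$ and in the discretization parameter, closing the argument.
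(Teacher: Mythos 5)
Your overall scheme (Lie splitting in time with $\varepsilon$ fixed, discrete energy bound, compactness in $\Delta t$, then the difference-quotient test function to get the $\varepsilon$-uniform Nikolskii bound and a minimal $T$ from the $L^\infty$-persistence of the geometric conditions) follows the paper's outline, and your treatment of the $\varepsilon$-term under the test function $\test(D^s_{-h}D^s_h\eta^\varepsilon-\cor(D^s_{-h}D^s_h\eta^\varepsilon))$ as well as the uniform-$T$ argument match what is done in Section~\ref{sec:thm1}. However, there is a genuine gap at the step you dispose of in one sentence: ``summation of the two discrete energy identities \dots yields a discrete energy inequality mimicking~\eqref{EI-weak}.'' For the structure sub-step you test with the discrete velocity $\frac{\eta^{n+1}-\eta^n}{\Delta t}$, and turning $\langle\mathcal{L}_K(\eta^{n+1}),\eta^{n+1}-\eta^n\rangle$ into the telescoping difference $\mathcal{E}_K(\eta^{n+1})-\mathcal{E}_K(\eta^n)$ requires convexity of the Koiter energy, which is exactly what is absent here (the paper stresses that the leading-order term is non-convex). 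The $\varepsilon\|\nabla^3\cdot\|_{L^2}^2$ term does not rescue this: the per-step defect is of size $\|\eta^{n+1}-\eta^n\|_{H^2}^2$, and interpolating between the kinetic bound $\|\eta^{n+1}-\eta^n\|_{L^2}\lesssim\Delta t$ and the regularized bound $\|\eta^{n+1}-\eta^n\|_{H^3}\lesssim\varepsilon^{-1/2}$ gives a sum over $N=T/\Delta t$ steps that blows up as $\Delta t\to0$. The paper's proof hinges on a specific device you do not mention: the Newton--Cotes (Simpson-rule) discretizations \eqref{GDis}--\eqref{RDis} of the Fr\'echet derivatives $\mathbf{G}'$, $\mathbf{R}'$, chosen because $\mathbf{R}'(\tilde\eta)\partial_t\tilde\eta$ is a cubic polynomial in $t$, so that the exact discrete chain rule $\mathbf{R}'(\eta^{n+\frac12},\eta^n)\frac{\eta^{n+\frac12}-\eta^n}{\Delta t}=\frac{1}{\Delta t}\bigl(\mathbf{R}(\eta^{n+\frac12})-\mathbf{R}(\eta^n)\bigr)$ holds and the discrete energy identity becomes exact despite non-convexity. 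Without this (or an equivalent idea) your discrete energy inequality, and hence every uniform bound downstream, is unproven.

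Two secondary points. Your claim that the structure sub-problem is ``uniquely solvable by direct minimization'' because $\varepsilon\nabla^3$ dominates the non-convex Koiter stress is unjustified: the discretized system \eqref{DProb3} is not the Euler--Lagrange equation of a convex functional, and the paper instead obtains existence of the sub-step by Schaefer's fixed point theorem (uniqueness is neither claimed nor needed). Also, for the $\Delta t\to0$ compactness the paper does not use Theorem~\ref{thm:auba} but the discrete-in-time Aubin--Lions--Simon generalization for moving domains of Mu\-ha--\v{C}ani\'c, reserving Theorem~\ref{thm:auba} (with the extension of Corollary~\ref{cor:extend}) for the $\varepsilon\to0$ limit; your plan to use Theorem~\ref{thm:auba} already at the $\Delta t$ level is a legitimate alternative route, but you would need to verify its equi-continuity hypothesis for piecewise-constant-in-time discrete solutions, which is not automatic from the semi-discrete weak formulation.
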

The existence of regularized solutions can be proved following the ideas and techniques introduced in \cite{BorSunnon-linear}. The problem solved in~\cite{BorSunnon-linear} is actually very similar to the regularized system above since there the existence of a solution to a FSI problem with a structure being an elastic shell with a non-linear Koiter membrane energy without bending energy, but with a (linear) regularization term of fourth order is shown. In order to be able to treat the non-linear bending energy in an analogous way we have to include a sixth order regularization term. Another difference comes from the fact that in~\cite{BorSunnon-linear} cylindrical geometry is considered. Nevertheless the introduced existence scheme does not depend on the geometry of the problem and more general geometries can be handled by combining the existence proof with the estimates in this paper and in~\cite{LenRuz}. To avoid lengthy repetitions of the arguments analogous to \cite{BorSunnon-linear} here, we summarize the main steps of the construction of a weak solutions with emphasis on the differences coming from the non-linear bending term and the setting of more general geometries. The main steps of the construction are:
\begin{enumerate}
\item {\bf Arbitrary Lagrangian-Eulerian (ALE) formulation}. We reformulate the problem in a fixed reference domain $\Omega$ using suitable change of variables. This approach is popular in numerics and the change of variables is called Arbitrary Lagrangian-Eulerian (ALE) mapping. The formulation in the fixed reference domain is called ALE formulation of the FSI problem. We use the mapping $\bpsi_{\eta}$ (introduced in Definition~\ref{def:coor}) as an ALE mapping.
\item {\bf Construction of the approximate solutions}. We construct the approximate solutions using time-discretizations and operator splitting methods. We use the Lie splitting strategy (also known as Marchuk-Yanenko splitting) to decouple the FSI problem.
\item {\bf Uniform estimates}. Let $\dt>0$ be the time-discretization parameter. We show that the constructed approximate solutions satisfy uniform bounds w.r.t. $\dt$ (and $\varepsilon$) in the energy function spaces. We identify weak and weak* limits.
\item {\bf Compactness}. We prove that the set of approximate solutions is compact in suitable norms. By using the compactness we prove that a limit of the sequence of approximate solutions is a weak solution to the regularized FSI problem. Here we use a generalization of the Aubin-Lions-Simon lemma for discrete in time solutions adapted to the moving domain problems from~\cite{BorSunComp17}.
\end{enumerate}
\noindent
Since a solution is constructed by decoupling the problem, the largest difference from \cite{BorSunnon-linear} is in the second step where in the structure sub-problem we include also the non-linear bending energy. However, we will show that the bending term can be discretized in an analogous way as the membrane term. Other steps are analogous as in~\cite{BorSunnon-linear} using the sixth order regularization. For the convenience of the reader we will describe the details of the time-discretization of the structure sub-problem with the corresponding uniform estimates in the time-discretization parameter $\dt$. We conclude this chapter with the description of the compactness step. Generally, for more details on the procedure we refer the reader to~\cite{BorSunnon-linear}. 

In the rest of the subsection we fix the regularizing parameter $\varepsilon$ and drop superscripts $\varepsilon$ in $(\bu^{\varepsilon},\eta^{\varepsilon})$ since there is no chance of confusion.

\smallskip
{\bf Construction of discrete approximations}

\noindent
The main problem in the construction of approximate solution is how to discretize the Fr\' echet derivatives of $\mathbf{G}$ and $\mathbf{R}$ to obtain the discrete analogue of $\mathbf{R}'(\eta)\partial_t\eta=\partial_t\mathbf{R}(\eta)$. In \cite{BorSunnon-linear} this was achieved by using the fact that the first fundamental form was polynomial of order two of $\eta$ and $\nabla\eta$ which was a consequence of the cylindrical geometry. Here we consider a more general geometry so we need to develop a more general approach.

For a given end-time $T$, we fix $\dt$ as the time step, such that $[0,T]=[0,N\dt]$ for some $N\in \Nbb$. Now let $(\eta^n)_{n=1}^N$ be a given time-discrete solution and $\tilde{\eta}$ be the piece-wise linear function in time such that $\tilde{\eta}(n\dt)=\eta^n$.
Then we have
$$
\mathbf{R}'(\tilde{\eta})\frac{\eta^{n+1}-\eta^n}{\dt}
=\mathbf{R}'(\tilde{\eta})\partial_t\tilde{\eta}
=\partial_t\mathbf{R}(\tilde{\eta})
\;{\rm on}\; [n\dt,(n+1)\dt].
$$
Notice that the expression $\mathbf{R}'(\tilde{\eta})\partial_t\tilde{\eta}$ is a third order polynomial in the $t$ variable so we can compute its integral $\int_{n\dt}^{(n+1)\dt}$ by using Newton-Cotes formula. Hence, by defining $\overline\eta^{n+1} := \frac{\eta^{n+1} + \eta^n}{2}$ we find the approximation of ${\bf G}'(\eta)\xi$ and ${\bf R}'(\eta)\xi$ in the following way:
\begin{equation}\label{GDis}
{\bf G}'(\eta^{n+1},\eta^n)\xi:=\frac{1}{3}\Big ({\bf G}'(\eta^n)+4{\bf G}'(\overline{\eta}^{n+1})+{\bf G}^{n+1}\Big )\xi
\end{equation}
and 
\begin{equation}\label{RDis}
{\bf R}'(\eta^{n+1},\eta^n)\xi:=\frac{1}{3}\Big ({\bf R}'(\eta^n)+4{\bf R}'(\overline{\eta}^{n+1})+{\bf R}^{n+1}\Big )\xi.
\end{equation}
By straightforward calculation it follows that
\begin{align*}
{\bf G}'(\eta^{n+\frac{1}{2}},\eta^n)\frac{\eta^{n+\frac 1 2}-\eta^n}{\Delta t}
&=\dt\int_{n\dt}^{(n+1)\dt}\frac{d}{dt}\mathbf{G}(\tilde{\eta})
\\
&=\frac{1}{\Delta t}({\bf G}(\eta^{n+\frac 1 2})-{\bf G}(\eta^n))
\end{align*} 
which is the correct substitute for ''$\partial_t{\bf G}(\eta)={\bf G}'(\eta)\partial_t\eta$''.
Analogously we find as substitute for ''$\partial_t{\bf R}(\eta)={\bf R}'(\eta)\partial_t\eta$''
$$
{\bf R}'(\eta^{n+\frac{1}{2}},\eta^n)\frac{\eta^{n+\frac 1 2}-\eta^n}{\Delta t}=\frac{1}{\Delta t}({\bf R}(\eta^{n+\frac 1 2})-{\bf R}(\eta^n)).
$$
These identities will be used to derive a semi-discrete uniform energy inequality. First we define the sequence of approximate solutions by solving the following problems.

\smallskip
{\bf Structure sub-problem}. 
\\
Find $(v^{n+\frac 1 2},\eta^{n+\frac 1 2})\in \big (H^2_0(\omega)\cap H^3(\omega)\big )^2$ such that:
\begin{align}
\begin{aligned}
&\int_{\omega}\frac{\eta^{n+\frac 1 2}-\eta^{n}}{\Delta t}\phi\, dy=\int_{\omega}v^{n+\frac 1 2}\phi\, dy,
\\ 
&\int_{\omega}\frac{v^{n+\frac 1 2}-v^{n}}{\Delta t}\psi\,dy
+\frac{1}{2}\int_{\omega}{\mathcal A}{\bf G}(\eta^{n+\frac 1 2}):{\bf G}'(\eta^{n+\frac 1 2},\eta^n)\psi\, dy
\\
&\quad +\frac{1}{24}\int_{\omega}{\mathcal A}{\bf R}(\eta^{n+\frac 1 2}):{\bf R}'(\eta^{n+\frac 1 2},\eta^n)\psi\, dy
+\varepsilon\int_{\omega}\nabla^3\eta^{n+\frac 1 2}\nabla^3\psi\, dy=0,
\end{aligned}
\label{DProb3}
\end{align}
for all $(\phi,\psi)\in L^2(\omega)\times \big (H^2_0(\omega)\cap H^3(\omega)\big)$. 

The existence of a solution to the above problem follows by Schaefer's Fixed Point Theorem as it was demonstrated in~\cite[Proposition 4]{BorSunnon-linear}).

\smallskip
{\bf Fluid sub-problem}.
\\
The fluid problem stays the same as in~\cite{BorSunnon-linear} (which is the advantage of the operator splitting method). Since the domain deformation is calculated in the structure sub-problem and does not change in the fluid sub-problem we set $\eta^{n+1}=\eta^{n+\frac12}$, and define $({\bf u}^{n+1},v^{n+1})\in {\mathcal V}_F^{\eta^n}\times L^2 (\omega)$ by requiring that
for all $({\bf q},\xi)\in{\mathcal V}_F^{\eta^n}\times L^2 (\omega)$ such that
${\bf q}_{|\Gamma}=\xi\bn$, the following weak formulation holds: 
\begin{align*}
\label{D1Prob1}
\begin{aligned}
&{}\int_{\Omega}  J^n \bigg( \frac{{\bf u}^{n+1}-{\bf u}^{n}}{\Delta t}\cdot{\bf q}+
\frac 1 2\left[({\bf u}^n-{\bf w}^{n+\frac 1 2})\cdot\nabla^{\eta^n}\right]{\bf u}^{n+1}\cdot{\bf q}
\\
&\quad
 -\frac 1 2 \left[({\bf u}^n-{\bf w}^{n+\frac 1 2})\cdot\nabla^{\eta^n} \right] {\bf q}\cdot{\bf u}^{n+1}\bigg)\, dx
\\
&\quad 
+\frac{1}{2} \int_{\Omega}\frac{J^{n+1}-J^n}{\dt}{\bf u}^{n+1}\cdot{\bf q}\, dx +2\int_{\Omega}J^n{\bf D}^{\eta^n}({\bf u}):{\bf D}^{\eta^n}({\bf q})\, dx
\\
&\quad 
+\int_\omega\frac{v^{n+1}-v^{n+\frac 1 2}}{\Delta t}\xi\, dy =0
\\
&{\rm with}\ \nabla^{\eta^n}\cdot{\bf u}^{n+1}=0,\quad {\bf u}^{n+1}_{|\Gamma}=v^{n+1}\bn.
\end{aligned}
\end{align*}
Here $\nabla^{\eta}$ is the transformed gradient, ${\bf w}^{n+1/2}$ is the ALE velocity (i.e.\ the time discretization of $\partial_t\bpsi_{\eta^n}$ (see Definition~\ref{def:coor})), and $J^n=\det\nabla\bpsi_{\eta^n}$ is the Jacobian of the transformation from $\Omega_{\eta^n}$ to the reference configuration $\Omega$. Please observe that the above system is a linear equation on a fixed domain and it is solvable as long as $J^n>0$ by the Lax-Milgram Lemma. One can see that no self-intersection implies $J^n>0$. 

Now we define the approximate solutions as a piece-wise constant functions in time:
\begin{equation}
{\bf u}_\dt(t,.)={\bf u}_\dt^n,\ \eta_\dt(t,.)=\eta_\dt^n,\ v_\dt(t,.)=v_\dt^n,\ v^*_\dt(t,.)=v^{n-\frac 1 2}_\dt\text{ for }t\in [n\dt,(n+1)\dt].
\label{aproxNS}
\end{equation}

{\bf Uniform estimates in $\dt$}.

\noindent
The following proposition gives us the uniform boundedness of the approximate solutions defined by \eqref{aproxNS}. It is a consequence of \cite[Lemma 8]{BorSunnon-linear} combined with Lemma~\ref{W14Bound} and Lemma~\ref{H2Bound}.
\begin{proposition}\label{UniformDT}
Let $\dt>0$. Then the approximate solutions defined by \eqref{aproxNS} satisfy the following estimate:
\begin{equation}\label{EstimateDt}
\begin{array}{c}
\|\bu_\dt\|_{L^{\infty}_tL^2_x}+\|\bu_\dt\|_{L^2_tH^1_x}
+\|\eta_\dt\|_{L^{\infty}_tH^2_x}
+\|v_\dt\|_{L^{\infty}_tL^2_x}+\|v^*_\dt\|_{L^{\infty}_tL^2_x}
+\sqrt{\varepsilon}\|\eta_\dt\|_{L^{\infty}_tH^3_x}\leq C,
\end{array}
\end{equation}
where $C$ depends on the data only. Moreover, there exists a $T>0$ independent of $\dt$ such that no self-intersection is approached.
\end{proposition}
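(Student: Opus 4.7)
The plan is to derive a discrete energy identity from each sub-problem, exploiting the Newton--Cotes discretization~\eqref{GDis}--\eqref{RDis} which was designed precisely so that
\[
\mathbf{G}'(\eta^{n+\frac12},\eta^n)\tfrac{\eta^{n+\frac12}-\eta^n}{\dt}=\tfrac{\mathbf{G}(\eta^{n+\frac12})-\mathbf{G}(\eta^n)}{\dt},\qquad \mathbf{R}'(\eta^{n+\frac12},\eta^n)\tfrac{\eta^{n+\frac12}-\eta^n}{\dt}=\tfrac{\mathbf{R}(\eta^{n+\frac12})-\mathbf{R}(\eta^n)}{\dt}.
\]
For the structure step in~\eqref{DProb3}, I first use the kinematic equation to identify $v^{n+\frac12}=(\eta^{n+\frac12}-\eta^n)/\dt$, and then test the momentum equation with $\psi=v^{n+\frac12}$. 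The inertial term yields by the elementary identity $2(a-b)a=|a|^2-|b|^2+|a-b|^2$ the telescoping contribution $\tfrac{1}{2\dt}(\|v^{n+\frac12}\|_{L^2}^2-\|v^n\|_{L^2}^2+\|v^{n+\frac12}-v^n\|_{L^2}^2)$. Combined with the symmetry and positive-definiteness of $\mathcal{A}$ and the same identity applied to the bilinear form induced by $\mathcal{A}$, the membrane and bending contributions telescope to lower bounds $\mathcal{E}_K(\eta^{n+\frac12})-\mathcal{E}_K(\eta^n)$; the sixth-order regularization telescopes directly. This gives a discrete energy identity for the structure step.

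For the fluid step I test with $(\bq,\xi)=(\bu^{n+1},v^{n+1})$. The convective term is skew-symmetrized and vanishes identically, the Jacobian compensation $\tfrac12(J^{n+1}-J^n)/\dt$ together with the inertial term produces the telescoping $\tfrac{1}{2\dt}(\|\sqrt{J^{n+1}}\bu^{n+1}\|_{L^2(\Omega)}^2-\|\sqrt{J^n}\bu^n\|_{L^2(\Omega)}^2+\|\sqrt{J^n}(\bu^{n+1}-\bu^n)\|_{L^2(\Omega)}^2)$, the $v$-inertia contributes an analogous $v$-telescoping term, and the symmetric-gradient term provides the dissipation. Summing both sub-problem identities over $n=0,\ldots,N-1$ telescopes to a discrete energy inequality, which immediately yields the uniform bounds on $\|\bu_\dt\|_{L^\infty_tL^2_x\cap L^2_tH^1_x}$ (via the uniform Korn inequality as in Lemma \ref{Korn}), $\|v_\dt\|_{L^\infty_tL^2_x}$, $\|v^*_\dt\|_{L^\infty_tL^2_x}$, $\mathcal{E}_K(\eta_\dt)\in L^\infty_t$, and $\sqrt{\varepsilon}\|\eta_\dt\|_{L^\infty_tH^3_x}$, provided $J^n$ remains uniformly positive (equivalently, provided no self-intersection occurs). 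The bound on $\|\eta_\dt\|_{L^\infty_tH^2_x}$ follows from the uniform control of $\mathcal{E}_K(\eta_\dt)$ together with Lemmas \ref{W14Bound} and \ref{H2Bound}, as long as the geometric coercivity $\gamma(\eta_\dt)\geq\gamma_0>0$ is maintained.

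The main obstacle is therefore to produce a time $T>0$, independent of $\dt$, on which $\gamma(\eta_\dt)\neq 0$ and no self-intersection occur, so that the estimates above are self-consistent. I would close this by a continuity/bootstrap argument: since by hypothesis $\gamma(\eta_0)\neq 0$ and $\Gamma_{\eta_0}$ is separated from self-intersection, both properties persist in a small $L^\infty(\omega)$-neighborhood of $\eta_0$ by continuity of the map $\eta\mapsto(\gamma(\eta),\mathrm{dist}(\Gamma_\eta,\text{self-intersection}))$. As long as $\eta_\dt$ stays in that neighborhood, the energy estimate applies and gives $\|v^*_\dt\|_{L^\infty_tL^2_x}\leq C_0$ uniformly, so $\|\eta_\dt(t)-\eta_0\|_{L^2_x}\leq C_0t$. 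Combining with the uniform $H^2_x$-bound and the two-dimensional interpolation $\|f\|_{L^\infty(\omega)}\leq C\|f\|_{L^2(\omega)}^{1/4}\|f\|_{H^2(\omega)}^{3/4}$ yields $\|\eta_\dt(t)-\eta_0\|_{L^\infty_x}\leq Ct^{1/4}$. Choosing $T>0$ small enough the neighborhood property is preserved on $[0,T]$, so the bootstrap closes. All constants entering this argument depend only on $\partial\Omega$ and the initial data (and in particular are independent of $\dt$ and of $\varepsilon\in(0,1]$), which produces the uniform existence time claimed in the proposition.
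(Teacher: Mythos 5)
Your proposal is correct and takes essentially the same route as the paper: the paper's proof simply cites \cite{BorSunnon-linear} (Lemma 8 there) for the telescoping discrete energy estimate that your Newton--Cotes/skew-symmetrization computation reproduces, combines it with Lemmas~\ref{W14Bound} and~\ref{H2Bound} to get the uniform $H^2$ bound, and obtains the $\dt$-independent time $T$ by exactly your argument, namely the $L^\infty_tL^2_x$ bound on $v^*_\dt$ plus $L^2$--$H^2$ interpolation of the $L^\infty$ drift of $\eta_\dt$ to preserve the coercivity condition, positivity of $J_\dt$ and absence of self-intersection on $[0,T]$.
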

\begin{proof}
 The proof can be directly adapted from~\cite[Lemma 8]{BorSunnon-linear} combined with Lemma~\ref{W14Bound} and Lemma~\ref{H2Bound}. In particular, we find by the uniform $L^{\infty}_tH^2_x$ estimates on $\eta_\dt$ that $\norm{\eta_\dt}_{L^\infty_t(L^\infty_x)}$ is uniformly bounded with constants just depending on $\partial \Omega$ and the initial condition.
Moreover, since $v^*_{\dt}$ is bounded in $L^{\infty}_tL^2_x$ we can use the interpolation inequality  for Sobolev spaces to show that there exists $T>0$ such that $\eta_{\dt}$ satisfies \eqref{CoercitivityCondition} and $J_\dt>0$ in $[0,T]$, uniformly in $\dt$ (and $\varepsilon$), cf.~\cite[Proposition 9]{BorSunnon-linear}. In particular $\partial\Omega_{\eta_{\dt}}$ has no self-intersection on $[0,T]$.
\end{proof}
Let us denote by $\bu$, $\eta$, $v$ and $v^*$ the corresponding weak or weak* limits of $\dt\to 0$. From \cite[Lemma 11]{BorSunnon-linear} it follows that $v=v^*$. 

\smallskip
{\bf Compactness for $\dt\to 0$}.

\noindent
First, we prove the strong convergence of the sequence $\eta_\dt$. This is a consequence of the uniform boundedness of the discrete time derivatives $\|\frac{\eta^{n+1}-\eta^n}{\dt}\|_{L^2(\omega)}$ and the boundedness of $\eta_{\dt}$ in $L^{\infty}(0,T;H^3(\omega))$. By using the classical Arzel\'a-Ascoli theorem for the piece-wise affine interpolation we get as in~\cite[Lemma 3]{BorSun} that
$$
\eta_\dt\to\eta\;{\rm in}\;L^{\infty}(0,T;H^s(\omega))\text{ for } s\in(0,3).
$$
This is enough to pass to the limit in the terms connected to the elastic energy. In order to pass to the limit in the convective term and the terms connected to the moving boundary we need strong $L^2$ convergence of $(\bu_\dt,v_\dt)$. This is the most delicate part of the existence proof where one has to use the uniform convergence of $\eta_\dt$ and the fact that the fluid dissipates higher frequencies of the structure velocities.

In the current case this follows by a version of the Aubin-Lions-Simon lemma adapted for the problems with moving boundary \cite[Theorem 3.1. and Section 4.2]{BorSunComp17}. Hence, by passing to the limit we find a $T>0$ such that for every fixed $\varepsilon>0$ there exists a weak solution to~\eqref{WeakSolEqReg}.

\subsection{Proof of Theorem~\ref{thm:ex}.}
In this subsection we first collect the necessary a-priori estimates (which essentially follow from the regularity theorem) and then pass to the limit with $\varepsilon\to 0$. Here the establishment of the non-linearity in the convective term is (as usually) the most delicate part.

{\bf Uniform estimates in $\varepsilon$}.

\noindent
We use the test function: 
\\
{$(\bq,\xi)=\Big (\test\big (D_{-h}^{s}D^{s}_h\eta^{\varepsilon}-\cor(D^s_{-h}D^s_h\eta^{\varepsilon})\big ),D_{-h}^{s}D^{s}_h\eta^{\varepsilon}-\cor(D^s_{-h}D^s_h\eta^{\varepsilon})\Big )$} in \eqref{WeakSolEqReg} in an analogous way as in the proof of Theorem~\ref{RegTem}. In combination with the energy estimates ee obtain the following uniform regularity estimate for all (fixed) $s<\frac{1}2$:
\begin{equation}\label{UniformEps}
\|\eta^{\varepsilon}\|_{L^\infty(0,T;(H^2\cap\sqrt{\varepsilon}H^3)(\omega))}+\|\partial_t\eta^{\varepsilon}\|_{L^\infty(0,T;L^2(\omega))}+\|\eta^{\varepsilon}\|_{L^2(0,T;N^{2+s,2}(\omega))}+\|\partial_t\eta^{\varepsilon}\|_{L^2(0,T;N^{s,2}(\omega))}\leq C.
\end{equation}

{\bf Passing with $\varepsilon \to 0$}.

\noindent
From the (classic) Aubin-Lions lemma we obtain
\begin{equation}\label{StrongConvergenceEta}
\eta^{\varepsilon}\to \eta\;{\rm in}\; L^2(0,T;H^s(\omega))\cap L^{\infty}(0,T;H^{s-1/2}(\omega))
\text{ for } s<5/2.
\end{equation}
In particular, $\eta^{\varepsilon}\to\eta$ in $L^2(0,T;H^2(\omega))\cap L^{\infty}(0,T;L^{\infty}(\omega))$ which is enough to pass to the limit in the elastic terms, see~\eqref{BendingFrom} for the highest order terms. 
The existence result is completed once we can show that (for a sub-sequence)$(\partial_t\eta^{\varepsilon},\bu^{\varepsilon})\to (\partial_t\eta,\bu)$, since this allows to establish all nonlinearities in the limit equation and the existence is complete.

The proof of the $L^2$ convergence of the velocities is known to be the most  delicate part of the construction of weak solutions in the framework of FSI in the incompressible regime, see \cite{CG,LenRuz,BorSunComp17}. Here we present a more universal approach based on the reformulation of the Aubin-Lions lemma (Theorem~\ref{thm:auba}) combined with the extension operator presented in Corollary~\ref{cor:extend}.

\begin{lemma}
\label{lem:l2conv}
There exists a strongly converging subsequence $\epsilon^n\to 0$, such that
\[
(\partial_t\eta^{\varepsilon_n},\bu^{\varepsilon_n})\to (\eta,\bu)
\]
in $L^2_tL^2_x$.
\end{lemma}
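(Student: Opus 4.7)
The plan is to invoke Theorem~\ref{thm:auba} (in the form of Remark~\ref{rem:modification}) with the coupled sequence $g_n := (\bu^{\varepsilon_n}, \partial_t\eta^{\varepsilon_n})$ and the choice $f_n := g_n$, so that the resulting convergence $\int_0^T\langle f_n, g_n\rangle\, dt \to \int_0^T \langle f, g\rangle\, dt$ is literally the $L^2$-norm squared converging to the $L^2$-norm squared of the limit, which combined with weak convergence yields strong $L^2$ convergence. Since the fluid component lives on varying domains $\Omega_{\eta^{\varepsilon_n}}(t)$, I would first fix a common ball $B\supset \bigcup_n\Omega_{\eta^{\varepsilon_n}}(t)$ (possible by $\|\eta^{\varepsilon_n}\|_{L^\infty_tL^\infty_x}\leq C$ via \eqref{UniformEps} and Sobolev embedding), extend $\bu^{\varepsilon_n}$ by zero to $B\setminus \Omega_{\eta^{\varepsilon_n}}$, and choose $X'=L^2(B)\times L^2(\omega)$ together with $Z'=H^{-1}(B)\times H^{-1}(\omega)$; condition~(4) of Theorem~\ref{thm:auba} then reduces to Rellich–Kondrachov. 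The final passage from $L^2(B)$-convergence back to $L^2(\Omega_\eta(t))$-convergence is performed using the uniform convergence $\eta^{\varepsilon_n}\to\eta$ from \eqref{StrongConvergenceEta}.

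For the approximability condition~(2), I would build $f_{n,\delta}$ in two pieces glued by a cutoff supported away from the boundary. Near $\Gamma_{\eta^{\varepsilon_n}}$, the fluid component is the solenoidal extension $E_{\eta^{\varepsilon_n},\delta}(\partial_t\eta^{\varepsilon_n})$ of Corollary~\ref{cor:extend} paired with the spatial mollification $(\partial_t\eta^{\varepsilon_n})_\delta$ on $\omega$; in the interior, it is a standard $\delta$-mollification of $\bu^{\varepsilon_n}$, with a \Bogovskij{} corrector supported in the interior to keep the total field divergence-free. Corollary~\ref{cor:extend} gives the required $L^s(0,T;X)$-closeness $\|f_{n,\delta}-f_n\|\leq c\delta^\sigma$ along with a $\delta$-dependent bound in $L^1(0,T;Z)$ via standard smoothing estimates and the regularity of the extension.

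For the equi-continuity condition~(3'), the pair $f_{n,\delta}(t)$ is admissible in $\mathcal{V}_T$ by construction, so testing \eqref{WeakSolEqReg} on the interval $[t,t+s]$ yields
\[
\langle g_n(t+s)-g_n(t), f_{n,\delta}(t)\rangle = \int_t^{t+s}\!\!\Big(\text{dissipation}+\text{convective}+a_m+a_b+\varepsilon\nabla^3+\text{lower order}\Big)(\tau)\, d\tau,
\]
where the right-hand side also absorbs the $\partial_t f_{n,\delta}$ contribution. Each term is bounded by $C(\delta)\tau^{1/2}(A(t)+1)$ with $A\in L^1(0,T)$ using the uniform bounds \eqref{UniformEps}, the regularity of the extension \eqref{TestNablaBound}–\eqref{TestTimeDBound}, and the H\"older inequalities from the proof of Theorem~\ref{RegTem} for the convective term. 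This delivers (3') with $\alpha=1/2$.

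The main obstacle is a subtle mismatch: $f_{n,\delta}(t)$ is tailored to the geometry at time $t$, yet in the equi-continuity estimate it is paired with $g_n$ at time $t+s$ on $\Omega_{\eta^{\varepsilon_n}}(t+s)$. Handling this requires transporting $f_{n,\delta}(t)$ along the ALE flow $\bpsi_{\eta^{\varepsilon_n}}$ between the two instants and controlling the resulting commutator, precisely in the spirit of Lemma~\ref{lem:IbP}. The key ingredients are the Hölder continuity in time of $\eta^{\varepsilon_n}$ (a consequence of $\partial_t\eta^{\varepsilon_n}\in L^\infty_tL^2_x$ together with $\eta^{\varepsilon_n}\in L^\infty_tH^2_x$ and interpolation) and the estimate \eqref{TestTimeDBound} on $\partial_t\,\text{Test}_{\eta^{\varepsilon_n}}$, both of which are uniform in $\varepsilon_n$ thanks to \eqref{UniformEps}. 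Once all four conditions of Theorem~\ref{thm:auba} are verified, the conclusion gives $\|g_n\|_{L^2}\to\|g\|_{L^2}$, which combined with weak convergence yields the claimed strong convergence.
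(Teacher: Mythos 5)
Your overall architecture is in the right spirit — Theorem~\ref{thm:auba} plus the solenoidal extension, equi-continuity extracted from \eqref{WeakSolEqReg}, and the time-mismatch of the geometry handled through $\partial_t$-estimates on the extension — but the single application with $X'=L^2(B)\times L^2(\omega)$ hides a genuine gap at the approximability condition~(2). Near the moving interface your glued field replaces $\bu^{\varepsilon_n}$ by $E_{\eta^{\varepsilon_n},\delta}(\partial_t\eta^{\varepsilon_n})$, so the error you must make small, uniformly in $n$ and in $L^2$, is essentially $\|\bu^{\varepsilon_n}-\test(\partial_t\eta^{\varepsilon_n}-\cor(\partial_t\eta^{\varepsilon_n}))\|_{L^2}$ on a boundary layer. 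Corollary~\ref{cor:extend} does not give this: it only compares the extension of the mollified shell velocity with the extension of the unmollified one, i.e.\ it controls $\|E_{\eta,\delta}(b)-\test(b-\cor(b))\|_{L^p}$ by $\|(b)_\delta-b\|_{L^p}$, and says nothing about closeness to the actual fluid velocity. The difference $\bu^{\varepsilon_n}-\test(\partial_t\eta^{\varepsilon_n})$ has matching (hence vanishing) interface trace, but since $\partial_t\eta^{\varepsilon_n}$ is only bounded in $L^2_tH^s_x$ with $s<\tfrac12$, the extension is not $H^1$, and no thin-layer Poincar\'e argument is available to convert the zero trace into uniform $L^2$-smallness; at best one could try an equi-integrability argument (shrinking layer width combined with $\bu^{\varepsilon_n}\in L^2_tL^6_x$), but that is not in your proposal, and it would still require re-doing the divergence repair (the cutoff and the \Bogovskij{} corrector) and the $C(\delta)$ bookkeeping consistently.

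This difficulty is precisely why the paper does not take $f_n=g_n$ in one shot: it splits $\int\|\partial_t\eta^{\varepsilon_n}\|^2+\|\bu^{\varepsilon_n}\|^2$ into the coupled pairing of $(\partial_t\eta^{\varepsilon_n},\bu^{\varepsilon_n}\chi_{\Omega_{\eta^{\varepsilon_n}}})$ with $(\partial_t\eta^{\varepsilon_n},\testn(\partial_t\eta^{\varepsilon_n}))$, and a remainder pairing of $\bu^{\varepsilon_n}$ with $\bu^{\varepsilon_n}-\testn(\partial_t\eta^{\varepsilon_n})$, and applies Theorem~\ref{thm:auba} twice with \emph{different} spaces. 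For the remainder, which has zero trace on $\Gamma_{\eta^{\varepsilon_n}(t)}$, the approximation by compactly supported solenoidal fields (via \cite[Lemma A.13]{LenRuz}, then mollification) is small only in a negative norm $H^{-s_0}(Q^\kappa)$; this is admissible because the other factor $\bu^{\varepsilon_n}\chi_{\Omega_{\eta^{\varepsilon_n}}}$ is uniformly bounded in $H^{s}(Q^\kappa)$, $s\le\tfrac14$, by \cite[Proposition 2.28]{LenRuz}, so the duality $\langle H^{-s},H^{s}\rangle$ carries the estimate. With your choice $X=L^2(B)$ this compensation mechanism is unavailable, so as written condition~(2) is not verified and the argument does not close; either supply the missing uniform $L^2$-approximation near the interface, or adopt the paper's two-step splitting with the $H^{-s}/H^{s}$ pairing for the zero-trace part. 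The remaining ingredients of your proposal (equi-continuity via \eqref{WeakSolEqReg}, the ALE/extension commutator controlled through \eqref{TestTimeDBound} and Corollary~\ref{cor:extend}, and the final norm-convergence plus uniform convexity step) are consistent with the paper's proof.
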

\begin{proof}
The strong convergence follows from  Theorem~\ref{thm:auba}. Actually we will apply the theorem two times. First for the boundary compactness and the other time for the interior compactness. {We denote by $\bn_{\eta^{\varepsilon_n}}$ the unit outer normal of $\Omega_{\eta^{\varepsilon_n}}$. From $\diverg\mathrm{Test}_{\eta^{\epsilon_n}}(\partial_t \eta^{\varepsilon_n}-\Kcal_{\eta^{\epsilon_n}}(\partial_t \eta^{\varepsilon_n}))=0$ we conclude that
\[
\Kcal_{\eta^{\epsilon_n}}(\partial_t \eta^{\varepsilon_n})\int_{\partial\Omega_{\eta^{\varepsilon_n}}}\bn(y(x))\cdot \bn_{\eta^{\varepsilon_n}}(x)\, dx=\int_{\partial\Omega_{\eta^{\varepsilon_n}}}\bu^{\varepsilon_n}\cdot \bn_{\eta^{\varepsilon_n}}(x)\, dx=0,
\]
since $\diverg\bu^{\varepsilon_n}=0$ and $\bu^{\varepsilon_n}(x)=\partial_t \eta^{\varepsilon_n}(y(x))\bn(\bfp(x))$, for $x\in\partial\Omega_{\eta^{\varepsilon_n}}$.
Since $\eta^{\varepsilon_n}(\bfp(x))$ is a graph over the surfaces $\Omega$ (see Figure 2) which has a well defined tangent plane almost everywhere, we conclude that $\bn(y(x))\cdot \bn_{\eta^{\varepsilon_n}}(x)>0$ a.e.; this implies that $\Kcal_{\eta^{\epsilon_n}}(\partial_t \eta^{\varepsilon_n})=0$.
And consequently $\testn(\partial_t \eta^{\varepsilon_n})$ is well defined:}
\begin{align*}
\int_0^T\norm{\partial_t \eta^{\varepsilon_n}}_{L^2(\omega)}^2+\norm{
\bu^{\varepsilon_n}}_{L^2(\Omega_{\eta^{\varepsilon_n}}}^2\,dt
&=\int_0^T\skp{\partial_t \eta^{\varepsilon_n},\partial_t \eta^{\varepsilon_n}}+\skp{\bu^{\varepsilon_n},\testn(\partial_t \eta^{\varepsilon_n})}\, dt
\\
&\quad +\int_0^T\skp{\bu^{\varepsilon_n},\bu^{\varepsilon_n}-\testn(\partial_t \eta^{\varepsilon_n})}\, dt=:(I_n)+(II_n).
\end{align*}
For both terms we will show the convergence separately by using Theorem~\ref{thm:auba}
The convergence implies that 
\[
\norm{\partial_t \eta^{\varepsilon_n}}_{L^2_tL^2_x}+\norm{\bu^{\varepsilon_n}\chi_{\Omega_{\eta^{\varepsilon_n}}}}_{L^2_tL^2_x}\to \norm{\partial_t \eta}_{L^2_tL^2_x}+\norm{\bu\chi_{\Omega_{\eta}}}_{L^2_tL^2_x},
\]
which implies, by the uniform convexity of $L^2$, the strong convergence $(\partial_t\eta^{\varepsilon},\bu^{\varepsilon})\to (\eta,\bu)$ and the Lemma is proved.

For the first term $(I_n)$ we define $g_n=(\partial_t \eta^{\varepsilon_n},\bu^{\varepsilon_n}\chi_{\Omega_{\eta^{\varepsilon_n}}})$ and $f_n=(\partial_t \eta^{\varepsilon_n},\testn(\partial_t \eta^{\varepsilon_n}))$, and apply Theorem~\ref{thm:auba} using the spaces $X={L^2(\omega)\times H^{-s}(Q^\kappa)}$ and consequently $X'={L^2(\omega)\times H^{s}(Q^\kappa)}$.  
The space $Z=H^{s_0}(\omega)\times H^{s_0}(Q^\kappa)$ for $0<s<s_0<\frac14$.
Also with respect to time we work in setting of Hilbert spaces, which means that all Lebesgue exponents are equal to two.
Further we recall the smooth extension $E_{\eta^{\varepsilon_n}(t),\delta}$ introduced in Corollary~\ref{cor:extend} and denote by $(\partial_t \eta^{\varepsilon_n})_\delta:=\partial_t \eta^{\varepsilon_n}*\psi_\delta$. This allows to define
\[
f_{n,\delta}:=((\partial_t \eta^{\varepsilon_n})_\delta,E_{\eta^{\varepsilon_n},\delta}(\partial_t \eta^{\varepsilon_n}))
\]

Next, let us check the assumptions of  Theorem~\ref{thm:auba} and Remark~\ref{rem:modification}. First observe that  \cite[Proposition 2.28]{LenRuz} and \eqref{UniformEps}
implies that $g_n$ is uniformly bounded in $L^2_t(H^{s}_x)$ (for $s\leq \frac14$).
Hence the assumptions (1)  follows in a rather straightforward manner by weak compactness in Hilber spaces. 
Next 
(2) follows 
by Corollary~\ref{cor:extend} and the standard estimates for mollifiers:
\[\norm{f_n-f_{n,\delta}}_{L^2(Q^\kappa)}\leq c\norm{\partial_t \eta^{\varepsilon_n}*\psi_\delta-\partial_t \eta^{\varepsilon_n}}_{L^2(\omega)}\leq C\delta^s\norm{\partial_t \eta^{\varepsilon_n}}_{H^s(\omega)}.
\]  
Hence we are left to check (3'). As usual for equi-continuity in time, this is a consequence of the weak formulation of the problem \eqref{WeakSolEqReg}. For $ \sigma\in [t,t+\tau]$ (using the solenoidality and the matching of the extension) we have 
\begin{align*}
\abs{\skp{g_n(t)-g_n(\sigma),f_{n,\delta}(t)}}
&=\absBB{\int_{Q^{\kappa}} \Big(\bu^{\varepsilon_n}(t)\chi_{\Omega_{\eta^{\varepsilon_n}(t)}}-\bu^{\varepsilon_n}(\sigma)\chi_{\Omega_{\eta^{\varepsilon_n}(\sigma)}}\Big)\cdot E_{\eta^{\varepsilon_n}(t),\delta}(\partial_t \eta^{\varepsilon_n}(t))\, dx
\\
&\quad   +\int_{\omega} \big(\partial_t\eta^{\varepsilon_n}(t)-\partial_t\eta^{\varepsilon_n}(\sigma)\big)(\partial_t \eta^{\varepsilon_n}(t))_\delta\big)\, d\by}
\\
&=\absBB{\int_{Q^{\kappa}} \bu^{\varepsilon_n}(t)\chi_{\Omega_{\eta^{\varepsilon_n}(t)}}\cdot E_{\eta^{\varepsilon_n}(t),\delta}(\partial_t \eta^{\varepsilon_n}(t))-\bu^{\varepsilon_n}(\sigma)\chi_{\Omega_{\eta^{\varepsilon_n}(\sigma)}}\cdot E_{\eta^{\varepsilon_n}(\sigma),\delta}(\partial_t \eta^{\varepsilon_n}(t)) dx 
\\
&\quad  +\int_{\omega} \big(\partial_t\eta^{\varepsilon_n}(t)-\partial_t\eta^{\varepsilon_n}(\sigma)\big)(\partial_t \eta^{\varepsilon_n}(t))_\delta\big)\, d\by}
\\
&\quad +\absBB{\int_{Q^{\kappa}} \bu^{\varepsilon_n}(\sigma)\chi_{\Omega_{\eta^{\varepsilon_n}(\sigma)}}\cdot \Big(E_{\eta^{\varepsilon_n}(\sigma),\delta}(\partial_t \eta^{\varepsilon_n}(t)) -E_{\eta^{\varepsilon_n}(t),\delta}(\partial_t \eta^{\varepsilon_n}(t))\Big)dx }
\\
&=: (A_1)+(A_2)
\end{align*}
First observe that by Corollary~\ref{cor:extend}
\begin{align*}
(A_2)&\leq \int \abs{\bu^{\varepsilon_n}(\sigma)}\chi_{\Omega_{\eta^{\varepsilon_n}(\sigma)}}\cdot \int_t^\sigma\abs{\partial_s E_{\eta^{\varepsilon_n}(s),\delta}(\partial_t \eta^{\varepsilon_n}(t))}\, ds \, dx 
\\
&\leq c\norm{\bu^{\varepsilon_n}(\sigma)}_{L^2(\Omega_{\eta^{\varepsilon_n}(\sigma))}}\norm{\partial_t \eta^{\varepsilon_n}(\partial_t \eta^{\varepsilon_n})_\delta}_{L^\infty_t(L^2_x)}\abs{\sigma-t}^\frac12
\\
&\leq C(\delta)\tau^\frac12,
\end{align*}
where we used the uniform $L^\infty_tL^2_x$ estimate of $\bu^{\varepsilon_n}$ and $\partial_t \eta^{\varepsilon_n}$ multiple times.
Second, by the weak formulation \eqref{WeakSolEqReg}, we find
\begin{align*}
(A_1)
&=\absBB{\int_\sigma^t\int_{\Omega_\eta}-\bu^{\varepsilon_n}(s)\cdot \partial_s E_{\eta^{\varepsilon_n}(s),\delta}(\partial_t \eta^{\varepsilon_n}(t))
\\
&+(\sym\nabla\bu^{\varepsilon_n}(s)-\bu^{\varepsilon_n}(s) \otimes \bu^{\varepsilon_n} (s)) :\nabla  E_{\eta^{\varepsilon_n}(s),\delta}(\partial_t \eta^{\varepsilon_n}(t))\, dx 
\\
& 
\quad +\int_{\omega}a_m(t,\eta^{\varepsilon_n},(\partial_t \eta^{\varepsilon_n}(t))_\delta)+a_b(t,\eta^{\varepsilon_n},(\partial_t \eta^{\varepsilon_n})_\delta)
+{\varepsilon_n}\nabla^3\eta^{\varepsilon_n}(s):\nabla^3(\partial_t \eta^{\varepsilon_n}(t))_\delta\, d\by\, ds}
\\
&\leq C(\delta)\Big((\norm{\bu^{\varepsilon_n}}_{L^2(t,t+\tau;W^{1,2}+1)(\Omega_{\eta^{\varepsilon_n}})}(\norm{\partial_t \eta^{\varepsilon_n}}_{L^\infty_t(L^2_x)}+\norm{\eta^{\varepsilon_n}}_{L^\infty_t(H^2_x+\varepsilon H^3_x)})
\\
&\quad +\norm{\eta^{\varepsilon_n}}_{L^2(t,t+\tau;W^{1,\infty}(\omega)}\norm{\bu^{\varepsilon_n}}_{L^\infty(0,T;L^{2}(\Omega_{\eta^{\varepsilon_n}}))}^2\Big) \abs{t-\sigma}^\frac12 
\\
&\leq C(\delta)\tau^\frac12.
\end{align*}
This implies (3'), namely
\[
\absB{\dashint_t^{t+\tau}\skp{g_n(t)-g_n(\sigma),f_{n,\delta}(t)}\, d\sigma}\leq C(\delta)\tau^\frac12.
\]

This finishes the proof of the convergence of $(I)_n$ term. For the second term $(II_n)$ we again apply Theorem~\ref{thm:auba}.
Here we set $g_n=\bu^{\varepsilon_n}\chi_{\Omega_{\eta^{\varepsilon_n}}}$ and $f_n=(\bu^{\varepsilon_n}-\testn(\partial_t \eta^{\varepsilon_n}))\chi_{\Omega_{\eta^{\varepsilon_n}}}$ 
We apply Theorem~\ref{thm:auba} with the following spaces $X={H^{-s}(Q^\kappa)}$ and consequently $X'={H^{s}(Q^\kappa)}$ for some $s\in (0,\frac14)$. Further we define $Z=L^2(Q^\kappa)$. Please observe that we may extend all involved quantities by zero to be functions over $Q^\kappa$. Finally, we again set all Lebesgue exponents to two. Similarly as for the first term, again the main effort is the construction of the right mollification. Indeed the assumptions $(1)$ and $(4)$ follow by standard compactness arguments. In particular, for assumption $(1)$ it has been shown in \cite[Proposition 2.28]{LenRuz} that $g_n$ is uniformly bounded in $H^{s}(Q^\kappa)$ (if $s\leq \frac14$).
For (2) we use the fact that $f_n$ has a zero trace on $\partial\Omega_{\eta^{\varepsilon_n}(t)}$. First, let $\delta>0$ be given. We take $n_\delta$ large enough and $\tau_\delta>0$ small enough, such that 
\begin{align}
\label{eq:domain}
\sup_{n\geq n_\delta}\sup_{\tau\in (t-\tau_\delta,t+\tau_\delta)\cap [0,T]}\norm{\eta(t,x)-\eta^{\varepsilon_n}(\tau,x)}_\infty\leq \delta.
\end{align}
Second, we fix $0<s_0<s$ and $\epsilon>0$. By \cite[Lemma A.13]{LenRuz} there exits a $\sigma_\epsilon$ and a sequence $\tilde{f}_{n,\delta}$, such that $\supp(\tilde{f}_{n,\delta}(t))\subset \Omega_{\eta^{\varepsilon_n}(t)-3\delta}$ for all $3\delta\leq \sigma_\epsilon$,
 that is divergence free and 
$\norm{f-\tilde{f}_{n,\delta}}_{(H^{-{s_0}}(Q^\kappa))}\leq \epsilon\norm{f}_{L^2(Q^\kappa)}$. We mollify this solenoidal function to define
\[
f_{n,\delta}= \tilde{f}_{n,\delta}*\psi_\delta\text{ where $\psi$ is the standard mollifier in space.}
\]
Then this definition implies (3') by standard mollification estimate. Indeed, choosing 
\begin{align*}
\norm{f_n-f_{n,\delta}}_{H^{-s}(Q^\kappa)}&\leq \norm{\tilde{f}_{n,\delta}-f_{n,\delta}}_{H^{-s}(Q^\kappa)} + \norm{\tilde{f}_{n,\delta}-f_n}_{H^{-s_0}(Q^\kappa)}
\\
&\leq c\delta^{s-s_0}\norm{\tilde{f}_{n,\delta}}_{H^{-s_0}(Q^\kappa)}+\epsilon\norm{f_n}_{L^2(Q^\kappa)}\leq c\epsilon \norm{f_n}_{L^2(Q^\kappa)},
\end{align*}
for $\delta$ small enough in dependence of $s-s_0$.
Please observe that by the properties of the mollification and \eqref{eq:domain} that $\supp(f_{n,\delta})\subset \Omega_{\eta^{\varepsilon_n}(t)-2\delta}\subset \Omega_{\eta^{\varepsilon_m}}(\tau)$ which implies that $f_{n,\delta}(t)$ can be used as a testfunction on the fluid equation alone over the interval $[t,t+\tau]$ for $t\in [0,T-\tau]$. Hence (3') follows exactly along the lines of the above estimates using the weak formulation \eqref{WeakSolEqReg}. Notice that here we will work just with the fluid equation since the traces of test function are zero at the moving interface. 
\end{proof}

{\bf End of the proof of Theorem~\ref{thm:ex} }

\noindent
The a-priori estimates and the above compactness arguments guarantee that for given initial conditions there is a minimal time interval $T>0$ for which a weak solution exists (see Remark~\ref{rem:T}). Once the solution is established we can repeat the argument (by using $\eta(T),\partial_t\eta(T),\bu(T)$ as initial conditions) until either a self-intersection is approached or a degeneracy of the $H^2$-coercivity is violated (namely if  $\gamma(\eta(t,x))\to 0$ for some $t\to T$).

\section*{Acknowledgments}
The work of B. Muha was supported by the Croatian Science Foundation (Hrvatska zaklada za znanost) grant number IP-2018-01-3706. S. Schwarzacher thanks the support of the research support programs of Charles University: PRIMUS/19/SCI/01 and UNCE/SCI/023. S.\ Schwarzacher thanks the support of the program GJ19-11707Y of the Czech national grant agency (GA\v{C}R).


\begin{thebibliography}{10}

\bibitem{Ada75}
R.~A. Adams.
\newblock {\em {S}obolev spaces}.
\newblock Academic Press [A subsidiary of Harcourt Brace Jovanovich,
  Publishers], New York-London, 1975.
\newblock Pure and Applied Mathematics, Vol.~65.

\bibitem{aubin1963theoreme}
Jean-Pierre Aubin.
\newblock Un th{\'e}oreme de compacit{\'e}.
\newblock {\em CR Acad. Sci. Paris}, 256(24):5042--5044, 1963.

\bibitem{AvalosTriggiani2}
George Avalos and Roberto Triggiani.
\newblock Rational decay rates for a {PDE} heat-structure interaction: a
  frequency domain approach.
\newblock {\em Evol. Equ. Control Theory}, 2(2):233--253, 2013.

\bibitem{FSIforBIO}
Tomas Bodnar, Giovanni~P. Galdi, and Sarka Necasova, editors.
\newblock {\em Fluid-Structure Interaction and Biomedical Applications}.
\newblock Birkh{\"a}user/Springer, Basel, 2014.

\bibitem{boulakia2018well}
Muriel Boulakia, Takeo Takahashi, and Sergio Guerrero.
\newblock Well-posedness for the coupling between a viscous incompressible
  fluid and an elastic structure.
\newblock {\em Preprint https://hal.inria.fr/hal-01939464}, 2018.


\bibitem{Breit}
Breit, Dominic, and Prince Romeo Mensah.
\newblock An incompressible polymer fluid interacting with a Koiter shell. \newblock {\em arXiv preprint:} arXiv:2009.14160, (2020).

\bibitem{BreitSchwarzacher}
Dominic Breit and Sebastian Schwarzacher.
\newblock Compressible fluids interacting with a linear-elastic shell.
\newblock {\em Arch. Ration. Mech. Anal.}, 228(2):495--562, 2018.

\bibitem{BreSch20}
Dominic Breit and Sebastian Schwarzacher.
\newblock Navier-Stokes-Fourier fluids interacting with elastic shells
\newblock {\em in preparation}, 2020.

\bibitem{BulBurSch18} Bulicek, M.and Burczak, J., and Schwarzacher, S.
\newblock Well posedness of nonlinear parabolic systems beyond duality. 
\newblock {\em Annales de l'Institut Henri Poincar\'{e} C, Analyse non lin\'{e}aire}, 36(5): 1467-1500, 2019.


\bibitem{CDEM}
Antonin Chambolle, Beno{\^{\i}}t Desjardins, Maria~J. Esteban, and C{\'e}line
  Grandmont.
\newblock Existence of weak solutions for the unsteady interaction of a viscous
  fluid with an elastic plate.
\newblock {\em J. Math. Fluid Mech.}, 7(3):368--404, 2005.

\bibitem{ChengShkollerCoutand}
C.~H.~Arthur Cheng, Daniel Coutand, and Steve Shkoller.
\newblock Navier-{S}tokes equations interacting with a non-linear elastic
  biofluid shell.
\newblock {\em SIAM J. Math. Anal.}, 39(3):742--800 (electronic), 2007.

\bibitem{ChenShkoller}
C.~H.~Arthur Cheng and Steve Shkoller.
\newblock The interaction of the 3{D} {N}avier-{S}tokes equations with a moving
  non-linear {K}oiter elastic shell.
\newblock {\em SIAM J. Math. Anal.}, 42(3):1094--1155, 2010.

\bibitem{CiarletKoiter}
P.~G. Ciarlet and A.~Roquefort.
\newblock Justification of a two-dimensional non-linear shell model of
  {K}oiter's type.
\newblock {\em Chinese Ann. Math. Ser. B}, 22(2):129--144, 2001.

\bibitem{CiarletVol3}
Philippe~G. Ciarlet.
\newblock {\em Mathematical elasticity. {V}ol. {III}}, volume~29 of {\em
  Studies in Mathematics and its Applications}.
\newblock North-Holland Publishing Co., Amsterdam, 2000.
\newblock Theory of shells.

\bibitem{CiarletBook3}
Philippe~G. Ciarlet.
\newblock {\em Mathematical elasticity. {V}ol. {III}}, volume~29 of {\em
  Studies in Mathematics and its Applications}.
\newblock North-Holland Publishing Co., Amsterdam, 2000.
\newblock Theory of shells.

\bibitem{CSS2}
Daniel Coutand and Steve Shkoller.
\newblock The interaction between quasilinear elastodynamics and the
  {N}avier-{S}tokes equations.
\newblock {\em Arch. Ration. Mech. Anal.}, 179(3):303--352, 2006.

\bibitem{Dowel15}
Earl~H. Dowell.
\newblock {\em A modern course in aeroelasticity}, volume 217 of {\em Solid
  Mechanics and its Applications}.
\newblock Springer, Cham, enlarged edition, 2015.

\bibitem{FeireislNovotny09}
Eduard Feireisl and Anton\'{i}n Novotn\'{y}.
\newblock {\em Singular limits in thermodynamics of viscous fluids}.
\newblock Advances in Mathematical Fluid Mechanics. Birkh\"{a}user Verlag,
  Basel, 2009.

\bibitem{GaldiKyed}
Giovanni~P. Galdi and Mads Kyed.
\newblock Steady flow of a {N}avier-{S}tokes liquid past an elastic body.
\newblock {\em Arch. Ration. Mech. Anal.}, 194(3):849--875, 2009.

\bibitem{galdi2011introduction}
Giovanni~Paolo Galdi.
\newblock {\em An Introduction to the Mathematical Theory of the Navier-Stokes
  Equations: Steady-State Problems}, volume 501.
\newblock Springer, 2011.

\bibitem{GaldiZunino}
Giovanni~Paolo Galdi, Mahdi Mohebbi, Rana Zakerzadeh, and Paolo Zunino.
\newblock Hyperbolic-parabolic coupling and the occurrence of resonance in
  partially dissipative systems.
\newblock In {\em Fluid-structure interaction and biomedical applications},
  Adv. Math. Fluid Mech., pages 197--256. Birkh\"{a}user/Springer, Basel, 2014.

\bibitem{grandmont2002existence}
C{\'e}line Grandmont.
\newblock Existence for a three-dimensional steady state fluid-structure
  interaction problem.
\newblock {\em Journal of Mathematical Fluid Mechanics}, 4(1):76--94, 2002.

\bibitem{CG}
C{\'e}line Grandmont.
\newblock Existence of weak solutions for the unsteady interaction of a viscous
  fluid with an elastic plate.
\newblock {\em SIAM J. Math. Anal.}, 40(2):716--737, 2008.

\bibitem{grandmont2016existence}
C{\'e}line Grandmont and Matthieu Hillairet.
\newblock Existence of global strong solutions to a beam--fluid interaction
  system.
\newblock {\em Archive for Rational Mechanics and Analysis}, 220(3):1283--1333,
  2016.

\bibitem{grandmont2018existence}
C{\'e}line Grandmont, Matthieu Hillairet, and Julien Lequeurre.
\newblock Existence of local strong solutions to fluid-beam and fluid-rod
  interaction systems.
\newblock In {\em Annales de l'Institut Henri Poincar{\'e} C, Analyse non
  lin{\'e}aire}. Elsevier, 2018.

\bibitem{hundertmark2016existence}
Anna Hundertmark-Zau{\v{s}}kov{\'a}, M{\'a}ria
  Luk{\'a}{\v{c}}ov{\'a}-Medvi{\v{d}}ov{\'a}, and {\v{S}}{\'a}rka
  Ne{\v{c}}asov{\'a}.
\newblock On the existence of weak solution to the coupled fluid-structure
  interaction problem for non-newtonian shear-dependent fluid.
\newblock {\em Journal of the Mathematical Society of Japan}, 68(1):193--243,
  2016.

\bibitem{ignatova2014well}
Mihaela Ignatova, Igor Kukavica, Irena Lasiecka, and Amjad Tuffaha.
\newblock On well-posedness and small data global existence for an interface
  damped free boundary fluid--structure model.
\newblock {\em Nonlinearity}, 27(3):467, 2014.

\bibitem{Ignatova17}
Mihaela Ignatova, Igor Kukavica, Irena Lasiecka, and Amjad Tuffaha.
\newblock Small data global existence for a fluid-structure model.
\newblock {\em Nonlinearity}, 30(2):848--898, 2017.

\bibitem{lions1969quelques}
Jacques-Louis and Lions.
\newblock {\em Quelques m{\'e}thodes de r{\'e}solution des problemes aux
  limites non lin{\'e}aires}, volume~31.
\newblock Dunod Paris, 1969.

\bibitem{Koiter}
W.~T. Koiter.
\newblock On the foundations of the linear theory of thin elastic shells. {I},
  {II}.
\newblock {\em Nederl. Akad. Wetensch. Proc. Ser. B 73 (1970), 169-182; ibid},
  73:183--195, 1970.
  
\bibitem{Kuk}
I.~Kukavica and A.~Tuffaha.
\newblock Solutions to a fluid-structure interaction free boundary problem.
\newblock {\em DCDS-A}, 32(4):1355--1389, 2012.

\bibitem{lee2003introduction}
M~Lee~John.
\newblock Introduction to smooth manifolds.
\newblock {\em Graduate Texts in Mathematics}, 218, 2003.

\bibitem{LenRuz}
Daniel Lengeler and Michael R{\u u}{\v{z}}i{\v{c}}ka.
\newblock Weak solutions for an incompressible newtonian fluid interacting with
  a koiter type shell.
\newblock {\em Archive for Rational Mechanics and Analysis}, 211(1):205--255,
  2014.

\bibitem{BorisTrag}
Boris Muha.
\newblock A note on the trace theorem for domains which are locally subgraph of
  a {H}\"older continuous function.
\newblock {\em Netw. Heterog. Media}, 9(1):191--196, 2014.

\bibitem{BorSun}
Boris Muha and Sun{\v c}ica {\v C}ani{\'c}.
\newblock Existence of a {W}eak {S}olution to a {N}onlinear
  {F}luid--{S}tructure {I}nteraction {P}roblem {M}odeling the {F}low of an
  {I}ncompressible, {V}iscous {F}luid in a {C}ylinder with {D}eformable
  {W}alls.
\newblock {\em Arch. Ration. Mech. Anal.}, 207(3):919--968, 2013.

\bibitem{SunBorMulti}
Boris Muha and Sun{\v{c}}ica {\v{C}}ani{\'c}.
\newblock Existence of a solution to a fluid--multi-layered-structure
  interaction problem.
\newblock {\em J. Differential Equations}, 256(2):658--706, 2014.

\bibitem{BorSunnon-linear}
Boris Muha and Sun{\v{c}}ica {\v{C}}ani{\'c}.
\newblock Fluid-structure interaction between an incompressible, viscous 3d
  fluid and an elastic shell with nonlinear Koiter membrane energy.
\newblock {\em Interfaces and free boundaries}, 17(4):465--495, 2015.

\bibitem{BorSunComp17}
Boris Muha and Sun{\v{c}}ica {\v{C}}ani{\'c}.
\newblock A generalization of the aubin--lions--simon compactness lemma for
  problems on moving domains.
\newblock {\em J. Differential Equations}, 266(12):8370--8418, 2019.

\bibitem{raymond2014fluid}
Jean-Pierre Raymond and Muthusamy Vanninathan.
\newblock A fluid--structure model coupling the navier--stokes equations and
  the lam{\'e} system.
\newblock {\em Journal de Math{\'e}matiques Pures et Appliqu{\'e}es},
  102(3):546--596, 2014.

\bibitem{remond2018viscous}
Antoine Remond-Tiedrez and Ian Tice.
\newblock The viscous surface wave problem with generalized surface energies.
\newblock {\em arXiv preprint arXiv:1806.07660}, 2018.

\bibitem{SchSor20}
Schwarzacher, Sebastian, and Matthias Sroczinski. 
\newblock Weak-strong uniqueness for an elastic plate interacting with the Navier Stokes equation.
\newblock {\em arXiv preprint:} arXiv:2003.04049, (2020).

\bibitem{Simon}
Jacques Simon.
\newblock Compact sets in the space {$L^p(0,T;B)$}.
\newblock {\em Ann. Mat. Pura Appl. (4)}, 146:65--96, 1987.

\bibitem{Srdjan}
S. Trifunovi\' c, 
\newblock Compressible viscous fluids interacting with plates - suitable weak solutions and weak-strong uniqueness
\newblock {\em draft (2020)}

\bibitem{Tri92}
Hans Triebel.
\newblock {\em Theory of function spaces. {II}}, volume~84 of {\em Monographs
  in Mathematics}.
\newblock Birkh\"auser Verlag, Basel, 1992.

\bibitem{ZhangZuazuaARMA07}
Xu~Zhang and Enrique Zuazua.
\newblock Long-time behavior of a coupled heat-wave system arising in
  fluid-structure interaction.
\newblock {\em Arch. Ration. Mech. Anal.}, 184(1):49--120, 2007.

\end{thebibliography}

\end{document}